\documentclass[a4paper,12pt]{amsart}

\usepackage[english]{babel}

\usepackage{amsmath}
\usepackage{amsthm}
\usepackage{amssymb}
\usepackage{amsfonts}
\usepackage{mathtools}
\usepackage{extarrows}
\usepackage{bbm}

\usepackage[
    a4paper,
    left=3cm,
    right=3cm,
    top=2.5cm,
    bottom=3cm
]{geometry}

\usepackage[shortlabels]{enumitem}
\usepackage{multicol}
\usepackage{pdflscape}
\usepackage[normalem]{ulem}
\usepackage{marginnote}
\usepackage{comment}
\usepackage{cancel}

\usepackage[dvipsnames]{xcolor}
\usepackage{graphicx}
\usepackage[export]{adjustbox}
\usepackage{subcaption}

\usepackage[all]{xy}

\usepackage{tikz}

\usetikzlibrary{
    arrows,
    arrows.meta,
    automata,
    backgrounds,
    calc,
    chains,
    decorations.markings,
    fit,
    matrix,
    mindmap,
    patterns,
    positioning,
    scopes,
    shapes,
    shapes.geometric
}

\usepackage{thmtools}
\usepackage{todonotes}

\definecolor{teal}{RGB}{0,128,128}
\definecolor{coral}{RGB}{220,90,70}

\usepackage{hyperref}
	\hypersetup{breaklinks=true,colorlinks=true,
linkcolor=coral,citecolor=teal,
urlcolor=MidnightBlue}

\usepackage{bookmark}

\setlist[enumerate]{
    itemsep=0.2em,
    topsep=0.25em
}

\makeatletter

\renewcommand{\@dotsep}{4.5}

\renewcommand{\l@section}{%
    \@tocline{1}{0.4em}{0em}{2.5em}{\bfseries}%
}

\renewcommand{\l@subsection}{%
    \@tocline{2}{0em}{2.5pc}{5pc}{}%
}

\renewcommand{\l@subsubsection}{%
    \@tocline{3}{0.1em}{4.9em}{5.8em}{}%
}

\renewcommand{\tocsection}[3]{%
    \indentlabel{%
        \@ifnotempty{#2}{%
            \bfseries\ignorespaces#1 #2\quad
        }%
    }%
    \bfseries#3%
}

\renewcommand{\tocsubsection}[3]{%
    \indentlabel{%
        \@ifnotempty{#2}{%
            \ignorespaces#1 #2\quad
        }%
    }%
    #3%
}

\def\@tocline#1#2#3#4#5#6#7{%
    \relax
    \ifnum#1>\c@tocdepth
    \else
        \par
        \addpenalty\@secpenalty
        \addvspace{#2}%
        \begingroup
            \hyphenpenalty\@M
            \@ifempty{#4}{%
                \@tempdima
                \csname r@tocindent\number#1\endcsname
                \relax
            }{%
                \@tempdima#4\relax
            }%
            \parindent\z@
            \leftskip#3\relax
            \advance\leftskip\@tempdima\relax
            \rightskip\@pnumwidth plus1em
            \parfillskip-\@pnumwidth
            #5%
            \leavevmode
            \hskip-\@tempdima
            {#6}%
            \nobreak
            \leaders
            \hbox{%
                $\m@th
                \mkern\@dotsep mu
                \hbox{.}%
                \mkern\@dotsep mu$%
            }%
            \hfill
            \nobreak
            \hbox to\@pnumwidth{%
                \@tocpagenum{%
                    \ifnum#1=1
                        \bfseries
                    \fi
                    #7%
                }%
            }%
            \par
            \nobreak
        \endgroup
    \fi
}

\AtBeginDocument{%
    \expandafter\renewcommand
    \csname r@tocindent0\endcsname{0pt}%
}

\makeatother

\newtheorem{theorem}{Theorem}[section]
\newtheorem{lemma}[theorem]{Lemma}
\newtheorem{fact}[theorem]{Fact}

\newtheorem{proposition}[theorem]{Proposition}

\newtheorem{corollary}[theorem]{Corollary}

\theoremstyle{definition}

\newtheorem{definition}[theorem]{Definition}
\newtheorem{example}[theorem]{Example}

\theoremstyle{remark}

\newtheorem{remark}[theorem]{Remark}

\numberwithin{equation}{section}

\newcommand{\R}{\mathbb{R}}
\newcommand{\C}{\mathbb{C}}
\newcommand{\N}{\mathbb{N}}
\newcommand{\K}{\mathbb{K}}
\newcommand{\T}{\mathbb{T}}
\newcommand{\eps}{\varepsilon}

\newcommand{\pten}{\widetilde{\otimes}_{\pi}}
\newcommand{\polten}[1]{%
    \widetilde{\otimes}_{\pi_s}^{#1, \,s}%
}
\newcommand{\multiten}[1]{%
    \widetilde{\otimes}_{\pi}^{\,#1}%
}
\newcommand{\symmultiten}[1]{%
    \widetilde{\otimes}_{\pi}^{#1,\,s}%
}

\DeclareMathOperator{\sep}{sep}
\DeclareMathOperator{\diam}{diam}

\DeclareMathOperator{\id}{Id}
\DeclareMathOperator{\supp}{supp}
\DeclareMathOperator{\re}{Re}
\DeclareMathOperator{\im}{Im}

\renewcommand{\geq}{\geqslant}
\renewcommand{\leq}{\leqslant}

\begin{document}

\title[Concentration theorems with applications to Kadec-Klee properties]{Concentration theorems for $2$-homogeneous polynomials and bilinear forms with applications to the Kadec-Klee properties}
\dedicatory{ }

\author[S.~Dantas]{Sheldon Dantas}
\address[S.~Dantas]{Czech Technical University in Prague, FEE, Department of Mathematics, Technick\'a 2, 16627, Prague 6, Czech Republic\\
\href{https://orcid.org/0000-0001-8117-3760}{ORCID: \texttt{0000-0001-8117-3760}}}
\email{\texttt{sheldon.dantas@fel.cvut.cz}}
\urladdr{www.sheldondantas.com}

\author[J.~T.~Rodr\'iguez]{Jorge Tom\'as Rodr\'iguez}
\address[J.~T.~Rodr\'iguez]{NuCoMPA, Facultad de Cs. Exactas, Universidad Nacional del Centro de la Provincia de Buenos Aires, (7000) Tandil, Argentina and CONICET.\\
\href{https://orcid.org/0000-0003-4693-2498}{ORCID: \texttt{0000-0003-4693-2498}}}
\email{\texttt{jtrodriguez@nucompa.exa.unicen.edu.ar}}

\begin{abstract}
We study Kadec-Klee properties in spaces of homogeneous polynomials and multilinear forms by presenting optimal results. Indeed, our main tool is a family of concentration theorems showing that, on suitable $p$-convex Banach sequence lattices, a $2$-homogeneous  polynomial or a bilinear form which almost attains its norm at finitely supported vectors is uniformly close to its restriction to the corresponding finite set of coordinates. As a consequence, we obtain the weak-star uniform Kadec-Klee property for spaces of complex $2$-homogeneous polynomials and complex bilinear forms on $p$-convex Banach sequence lattices with constant one for some $p>2$. This applies, in particular, to $c_0$, $\ell_p$, Lorentz spaces $d(w,p)$, Garling spaces $g(w,p)$, among others. In the real setting, a concentration argument based on the modulus of convexity of power type yields further positive results for spaces of operators and bilinear forms, extending previous results in the literature. We also prove that the restriction $p>2$ is optimal for $2$-homogeneous polynomials on $\ell_p$, $d(w,p)$ and $g(w,p)$, and establish general failures of the sequential weak-star Kadec-Klee property for real homogeneous polynomials of degree at least two and for complex homogeneous polynomials and symmetric multilinear forms of degree at least three. These results show that our positive results are optimal respect to the degree and that the assumption $p>2$ is optimal for the classical spaces $\ell_p$, $d(w,p)$ and $g(w,p)$. As a consequence of our results, we establish the strong subdifferentiability of the associated projective and symmetric projective tensor norms.
\end{abstract}

\subjclass[2020]{Primary 46B20, 46G25; Secondary 46B28}
\keywords{Weak-star Kadec--Klee property; homogeneous polynomial; multilinear form; symmetric tensor product; strong subdifferentiability}

\maketitle
\thispagestyle{plain}

\begin{center}
\begin{minipage}{.8\textwidth}
  \centering
\parskip=0ex
    \tableofcontents
\end{minipage}
\end{center}

\newpage
\section{Introduction}

Our main goal in the present paper is to study the weak-star uniform Kadec-Klee and the weak-star Kadec-Klee properties ($w^*$-UKK and $w^*$-KK, respectively) in some dual Banach spaces (see definitions below). These properties play an important role in the geometry of the Banach space (see \cite{HajekTalponen, Lancien, Raja}). Besides their intrinsic interest, they are also closely related to differentiability properties of the norms of the involved spaces (see \cite{DJMR, FP, GodefroyMontesinosZizler, JLPS}). While these properties have been extensively studied for classical Banach spaces and spaces of linear operators (see, for instance, \cite{DK} and the references therein), much less is known in nonlinear settings such as spaces of homogeneous polynomials and multilinear forms.

Our purpose is to provide a systematic study of the $w^*$-KK and the $w^*$-UKK for spaces of homogeneous polynomials and (symmetric) multilinear forms by presenting optimal results. Let $X$ be a real or complex Banach space. Our main focus is on the Banach spaces $\mathcal{P}(^n X; \K)$, $\mathcal{L}(^n X \times Y; \K)$ and $\mathcal{L}_s(^nX; \K)$ endowed with the canonical weak-star topology arising from their natural tensor preduals.

The main technical contribution of the paper is a family of concentration theorems for $2$-homogeneous polynomials and bilinear forms. Roughly speaking, these results show that whenever a 2-homogeneous polynomial or bilinear form almost attains its norm at finitely supported vectors, then it is uniformly close to its restriction to the corresponding finite set of coordinates. Thus, most of its behavior is already determined by finitely many coordinates. These concentration theorems are of independent interest and we expect that they might be useful in different contexts when one is studying the spaces of $2$-homogeneous polynomials and bilinear forms.

Having these concentration theorems in our favor, we are able to provide positive results about the $w^*$-UKK. Indeed, one can replace a $2$-homogeneous polynomial or bilinear form by a finite-dimensional restriction, where the weak-star and norm topologies coincide. This yields the $w^*$-UKK for spaces of complex $2$-homogeneous polynomials and complex bilinear forms on $p$-convex Banach sequence lattices with constant one whenever $p>2$. As applications, we obtain the $w^*$-UKK for the corresponding spaces over $c_0$, $\ell_p$, Lorentz sequence spaces $d(w,p)$ and Garling sequence spaces $g(w,p)$ whenever $p>2$. We also obtain several positive results in the real setting for spaces of operators and bilinear forms by combining concentration arguments with estimates involving moduli of convexity of power type. These methods in particular allow us to recover results from \cite{DK} and \cite{RZSSD}.

At this point, one might wonder why considering only $2$-homogeneous polynomials and bilinear forms. The answer comes with a collection of negative results showing that the previous positive theorems are essentially optimal. Indeed, we prove that the sequential $w^*$-KK fails for real homogeneous polynomials of every degree greater than one, for complex homogeneous polynomials of degree at least three and for (symmetric) multilinear forms of degree at least three. We also show that the restriction $p>2$ cannot be removed in the quadratic complex setting by proving that the corresponding properties fail on $\ell_p$, Lorentz spaces $d(w,p)$ and Garling spaces $g(w,p)$ whenever $1 < p \leq 2$ for the spaces of $2$-homogeneous polynomials on these spaces. 

The paper is organized as follows. In \hyperref[sec:preliminaries]{Section~\ref*{sec:preliminaries}}, we recall the necessary background on tensor products, homogeneous polynomials, multilinear mappings and Kadec-Klee properties. \hyperref[sec: concentration]{Section \ref*{sec: concentration}} contains the concentration theorems for $2$-homogeneous polynomials and bilinear forms, which constitute the main technical tool of the paper and, as we have mentioned before, are of independent interest. In \hyperref[sec: quadratic and bilinear]{Section \ref*{sec: quadratic and bilinear}}, we apply these concentration results to obtain the $w^*$-UKK for spaces of $2$-homogeneous polynomials and bilinear forms on a broad class of Banach sequence lattices, together with several applications. \hyperref[sec: higher degrees]{Section \ref*{sec: higher degrees}} is devoted to negative results, showing that the $w^*$-KK properties fail for higher degrees in these spaces. Finally, \hyperref[sec:further-remarks]{Section \ref*{sec:further-remarks}} contains further remarks on the topic. In particular, we discuss the optimality of the concentration theorems, explain how our arguments extend to the non-separable spaces $c_0(\Gamma)$ and $\ell_p(\Gamma)$, and derive consequences concerning the strong subdifferentiability of the corresponding tensor norms.

\section{Notation and preliminaries}\label{sec:preliminaries}

Throughout the paper, $X$ denotes a Banach space over the scalar field $\K=\R$ or $\C$. We write $B_X$ and $S_X$ for the closed unit ball and the unit sphere of $X$, respectively, and $X^*$ for the dual of $X$. Since here the distinction between the real and complex cases will be important, we write, for instance, $c_0(\R)$ and $c_0(\C)$, and similarly for the other spaces.

\subsection{Spaces we will be working with} Let $X_1,\ldots, X_n$ be Banach spaces over $\K$. We denote by $\mathcal{L}(^nX_1\times \cdots \times X_n;\K)$ the Banach space of all continuous $n$-linear forms endowed with the norm
\begin{equation*}
\|A\|=\sup\{|A(x_1,\ldots,x_n)|:x_j\in B_{X_j}\}
\end{equation*}
for every $A \in \mathcal{L}(^n X_1 \times \cdots \times X_n; \K)$. When all the spaces considered are the same $X$, we just write $\mathcal{L}(^nX;\K)$ for simplicity. This space is naturally identified, via the canonical linearization, with the dual of the corresponding projective tensor product. More precisely, we have 
\[
\mathcal{L}(^nX_1\times \cdots \times X_n;\K)=\bigl(X_1 \pten \cdots \pten X_n)^*
\]
isometrically. For the basic theory of tensor products and the notation used throughout the paper, we refer the reader to \cite{defant1992tensor} (see also \cite{ryan2002introduction}). 

We write $\mathcal{L}_s(^nX;\K)$ for the closed subspace of symmetric $n$-linear forms endowed with the same multilinear norm. If $\sigma^n$ is the symmetrization projection on $\multiten{n}X$, we denote its range, with the norm inherited from $\multiten{n}X$, by
\begin{equation*}
\symmultiten{n}X:=\sigma^n\bigl(\multiten{n}X\bigr).
\end{equation*}
Then, we have the identification
\begin{equation*}
\mathcal{L}_s(^nX;\K)=\bigl(\symmultiten{n}X\bigr)^*
\end{equation*}
isometrically. 

A continuous mapping $P:X\rightarrow\K$ is called $n$-homogeneous polynomial if there is a continuous symmetric $n$-linear form $\check P\in \mathcal{L}_s(^nX;\K)$ such that $P(x)=\check P(x,\ldots,x)$ for every $x\in X$. We denote the Banach space of all continuous scalar-valued $n$-homogeneous polynomials on $X$ by $\mathcal{P}(^nX;\K)$ and use the norm
\begin{equation*}
\|P\|=\sup\{|P(x)|:x\in B_X\}.
\end{equation*}
The canonical linearization identifies $\mathcal{P}(^nX;\K)$ isometrically with
\begin{equation*}
\bigl(\polten{n}X\bigr)^*
\end{equation*}
where $\polten{n}X$ denotes the completed symmetric projective tensor product associated with the polynomial norm (see, for instance, \cite{floret1997natural}). 

Let us notice that $\polten{n}X$ and $\symmultiten{n}X$ are the same set but generally carry different norms. Thus, although $\mathcal{P}(^nX;\K)$ and $\mathcal{L}_s(^nX;\K)$ are isomorphic, they need not be isometric. This is why weak-star Kadec-Klee properties for homogeneous polynomials and weak-star Kadec-Klee for symmetric multilinear forms should be viewed as different problems. 

There is another identification that will be used repeatedly throughout the paper. Let $Z$ be a Banach space. The space $\mathcal{L}(^{n+1}X_1\times\cdots \times X_n\times Z; \K)$ can be identified isometrically with the vector-valued $n$-linear mappings $\mathcal{L}(^{n}X_1\times\cdots \times X_n; Z^*)$ sending an $n+1$-linear form
$A$
to the vector-valued $n$-linear mapping
\(
\widetilde A
\)
defined by
\[
\widetilde A(x_1,\ldots,x_n)(z)
=A(x_1,\ldots,x_n,z)
\]
for every $z \in Z$. We shall use this identification repeatedly, especially in the bilinear case, where bilinear forms on $X\times Y$ can be regarded as bounded linear operators from $X$ into $Y^*$. This point of view will be particularly convenient, since some of the bibliography cited is originally formulated for linear operators.

We will also work with the following Banach spaces. Let us recall the definition of Lorentz spaces (see, for instance, \cite[Section 4.e]{LT1} and \cite[Section 2.a]{LT2}). Let $1 < p < \infty$. Let $(w(k))_{k=1}^{\infty}$ be a decreasing sequence of positive numbers such that $w(1) = 1$, $\lim_k w(k) = 0$ and $\sum_{k=1}^{\infty} w(k) = \infty$. Then, the corresponding Lorentz sequence space, denoted by $d(w,p)$, is defined as the space of all sequences $x=(x(k))_k$ such that
\begin{equation*}
    \|x\|_{d(w,p)} = \sup_{\pi \in \Sigma_{\N}} \left( \sum_{k=1}^{\infty} |x(\pi(k))|^p w(k) \right)^{1/p} = \left( \sum_{k=1}^{\infty} |x^{\star}(k)|^p w(k) \right)^{1/p} < \infty
\end{equation*}
where $\Sigma_{\N}$ denotes the group of permutations of the natural numbers and $(x^{\star}(k))_k$ denotes the decreasing rearrangement of $(|x(k)|)_k$. On the other hand, the Garling space $g(w,p)$ is defined by 
\begin{equation*}
    \|x\|_{g(w,p)}^p := \sup_{\phi} \sum_{j=1}^{\infty} |x(\phi(j))|^pw(j) 
\end{equation*}
where the supremum is taken over all increasing maps $\phi: \N \rightarrow \N$. Its canonical unit vectors form a normalized Schauder basis. The main difference between Lorentz sequence spaces and Garling sequence spaces is that, unlike for $d(w,p)$, the canonical basis of $g(w,p)$ is not symmetric, which will not play any role in this paper. For background, see, for instance, \cite{AAW, Garling} and the references therein.

\subsection{Kadec-Klee definitions} We introduce now the main properties we will be considering in the present manuscript. For this subsection, let us fix $X$ to be a Banach space over $\K$.

\begin{definition}\label{def:wstar-kk} We say that the dual space $X^*$ has the  weak-star Kadec-Klee property ($w^*$-KK for short) with respect to $X$ if, whenever $(x_\alpha^*)\subseteq S_{X^*}$ is a net, $x^*\in S_{X^*}$ and $x_\alpha^*\xrightarrow{w^*}x^*$, then $\|x_\alpha^*-x^*\|\rightarrow0$.
\end{definition}

In the literature, the sequential version of this property is also considered. We say that $X^*$ has the sequential $w^*$-KK property whenever \hyperref[def:wstar-kk]{Definition~\ref*{def:wstar-kk}} holds with sequences in place of nets.

In order to introduce the next definition, we first define the separation of a net. Given a net $(x_\alpha)$ in $X$, we write
\[
\sep(x_\alpha)=\inf\{\|x_\alpha-x_\beta\|:\alpha<\beta\}.
\]

We have the following stronger property.

\begin{definition}\label{def:wstar-ukk} The dual space $X^*$ has the weak-star uniform Kadec--Klee property ($w^*$-UKK, for short) if, for every $\eps>0$, there exists $\delta>0$ such that, whenever $(x_\alpha^*)_{\alpha\in\Lambda}\subseteq B_{X^*}$ is a net indexed by a directed set $\Lambda$ with no maximal element,  $x_\alpha^*\xrightarrow{w^*}x^*$and  $\sep(x_\alpha^*)\geq\eps$   then $\|x^*\|\leq1-\delta$. 
\end{definition}

Notice that excluding directed sets with a maximal element is harmless. Indeed, if a net $(x_\alpha)_{\alpha\in\Lambda}$ has a maximal index $\alpha_0$, then its limit (in any topology which is Hausdorff) is necessarily $x_{\alpha_0}$, and the net cannot provide any non-trivial information.

Analogously, we consider the sequential $w^*$-UKK by using sequences instead of nets. The sequential formulation is used in the classical $w^*$-UKK literature (see, for instance, \cite{DK,vanDulstSims}). The stronger net formulation of the $w^*$-UKK has also been studied, albeit in an equivalent form, by Lancien and Raja (see \cite{Lancien} and \cite{Raja}, respectively), where it is expressed in terms of weak-star neighborhoods of small diameter. Throughout the paper, we will be using this characterization of the \hyperref[def:wstar-ukk]{Definition \ref*{def:wstar-ukk}} as in \hyperref[obs:wstar-ukk]{Proposition \ref*{obs:wstar-ukk}} below without any explicit reference (see \cite[Lemma 3.1]{Raja} applied to the unit ball of the dual).

\begin{proposition}\label{obs:wstar-ukk} Let $X$ be a Banach space. Then, $X^*$ has the $w^*$-UKK if and only if for every $\eps > 0$, there exists $\eta > 0$ such that every $x^* \in B_{X^*}$ satisfying $\|x^*\| > 1 - \eta$ admits a weak-star neighborhood $U$ for which $\diam(U \cap B_{X^*})<\eps$.
\end{proposition}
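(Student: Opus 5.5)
We prove both implications directly from the definitions, using two standard facts. First, $B_{X^*}$ is weak-star compact (Banach--Alaoglu). Second, the dual norm is weak-star lower semicontinuous: if $x_\alpha^*\xrightarrow{w^*}x^*$ with $\|x_\alpha^*\|\le 1$, then $\|x^*\|\le\liminf_\alpha \|x_\alpha^*\|$.

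\emph{Neighborhood condition $\Rightarrow$ $w^*$-UKK.} Given $\eps>0$, apply the neighborhood condition with $\eps/2$ to obtain $\eta$, and put $\delta=\eta$. Let $(x_\alpha^*)\subseteq B_{X^*}$ be a net with no maximal index, $x_\alpha^*\xrightarrow{w^*}x^*$ and $\sep(x_\alpha^*)\ge\eps$. Suppose, for contradiction, that $\|x^*\|>1-\eta$. Note that $x^*\in B_{X^*}$ by lower semicontinuity.
\begin{enumerate}
\item Choose a weak-star neighborhood $U$ of $x^*$ with $\diam(U\cap B_{X^*})<\eps/2$.
\item By weak-star convergence there is $\alpha_0$ such that $x_\alpha^*\in U\cap B_{X^*}$ for all $\alpha\ge\alpha_0$.
\item Since $\alpha_0$ is not maximal, there is $\beta>\alpha_0$.
\item Then $\|x_{\alpha_0}^*-x_\beta^*\|<\eps/2$, which contradicts $\sep(x_\alpha^*)\ge\eps$.
\end{enumerate}
Hence $\|x^*\|\le 1-\eta=1-\delta$.

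\emph{$w^*$-UKK $\Rightarrow$ neighborhood condition.} We argue by contraposition. Suppose there is $\eps>0$ such that, for every $\eta>0$, some $x^*$ with $1-\eta<\|x^*\|\le1$ has $\diam(U\cap B_{X^*})\ge\eps$ for every weak-star neighborhood $U$ of $x^*$. We build a net at such a point with separation $\ge\eps/4$; the UKK $\delta$ for $\eps/4$ then forces $\|x^*\|\le1-\delta$, which is impossible once $\eta<\delta$.

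\emph{Construction of the net.} Fix such an $x^*$ and let $\mathcal U$ be its basic weak-star neighborhoods, directed by reverse inclusion. For each $U\in\mathcal U$ the diameter condition gives $y^*,z^*\in U\cap B_{X^*}$ with $\|y^*-z^*\|>\eps/2$. At least one of them, call it $y_U^*$, satisfies $\|y_U^*-x^*\|>\eps/4$. Then $y_U^*\to x^*$ weak-star while staying at distance $>\eps/4$ from $x^*$, but this does not yet control $\|y_U^*-y_V^*\|$ for $U\ne V$.

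\emph{Main obstacle: the separation condition.} To get uniform separation from a point that is merely "far from the limit", we use weak-star lower semicontinuity of the norm once more. For a fixed $y^*$ with $\|y^*-x^*\|>\eps/4$, the set
\[
W_{y^*}=\{v^*:\|v^*-y^*\|>\eps/4-\text{tiny}\}
\]
contains $x^*$. Since the norm is weak-star lower semicontinuous, this set is weak-star open, so it is a weak-star neighborhood of $x^*$. We now build the net inductively (transfinitely), indexed by a directed set of pairs (neighborhood, finite set of previous indices). At each stage we pick the new element inside the current $U$ intersected with the finitely many sets $W_{y^*}$ for the previously chosen $y^*$.
\begin{enumerate}
\item Inside such a neighborhood, every point is at distance about $\ge\eps/4$ from each earlier element.
\item The diameter condition still applies there, so we can again choose a point at distance $>\eps/4$ from $x^*$.
\end{enumerate}
This produces a net with no maximal element, weak-star convergent to $x^*$, with separation $\gtrsim\eps/4$.

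Making this indexing precise is exactly what \cite[Lemma 3.1]{Raja} does. In a proof we would follow that lemma rather than redo the bookkeeping, since the remaining steps are routine.
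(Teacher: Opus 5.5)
Your proposal is correct and follows the same route as the paper, which gives no argument of its own beyond invoking \cite[Lemma 3.1]{Raja} for $B_{X^*}$: you prove the easy implication directly and reduce the other implication to that same lemma. The bookkeeping you defer is short and needs no transfinite recursion. Index the net by the finite nonempty sets $S$ of weak-star neighborhoods of $x^*$, ordered by inclusion, so that each $S$ has only finitely many strict predecessors. Then, by induction on $|S|$, choose $z_S\in B_{X^*}\cap\bigcap S\cap\bigcap_{T\subsetneq S}\{v^*:\|v^*-z_T\|>\eps/4\}$ with $\|z_S-x^*\|>\eps/4$; such a point exists by your diameter and lower-semicontinuity observations.
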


There is a relevant comment we feel obliged to highlight. When we write, for instance, $\mathcal{P}(^n X; \R)$ we are considering the real-valued $n$-homogeneous polynomials viewed as a real Banach space in the same way that $\mathcal{P}(^n X; \C)$ means the complex-valued $n$-homogeneous polynomials considered as a complex Banach space. This distinction is relevant in our context as one could consider $\mathcal{P}_{\R}(^n X; \C)$ to be the Banach space of all complex-valued $n$-homogeneous polynomials as a real Banach space, for instance. We will not be working with this last space in the present manuscript.

The following lemma shows that, for polynomial and multilinear spaces, weak-star convergence is equivalent to pointwise convergence

\begin{lemma}\label{lem:wstar}
Let $X, X_1,\ldots, X_n$ be  Banach spaces over $\K$. The pointwise convergence for bounded nets on the Banach spaces $\mathcal{P}(^nX;\K),$ $\mathcal{L}(^nX_1\times \cdots \times X_n;\K)$ and $\mathcal{L}_s(^nX;\K)$ coincides with the weak-star convergence induced by the canonical preduals $\polten{n}X$, $\multiten{n}X_j$ and $\symmultiten{n}X$, respectively.
\end{lemma}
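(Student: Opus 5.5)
The plan is the standard argument that, on bounded sets of a dual space, weak-star convergence coincides with convergence on a subset of the predual whose linear span is dense. We carry it out for $\mathcal{L}(^nX_1\times\cdots\times X_n;\K)=(\multiten{n}X_j)^*$; the other two spaces are handled identically, with the set of elementary tensors replaced by the appropriate symmetric one.

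\textbf{Weak-star implies pointwise.} Suppose $(A_\alpha)$ is a net with $A_\alpha\xrightarrow{w^*}A$. Under the canonical linearization, the action of $A_\alpha$ on the elementary tensor $x_1\otimes\cdots\otimes x_n$ is exactly $A_\alpha(x_1,\ldots,x_n)$. Hence
\[
A_\alpha(x_1,\ldots,x_n)\to A(x_1,\ldots,x_n)
\]
for every choice of $x_j\in X_j$, which is pointwise convergence. This direction does not use boundedness.

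\textbf{Pointwise implies weak-star.} Suppose now $\sup_\alpha\|A_\alpha\|\leq M$ and $A_\alpha\to A$ pointwise. By linearity, $\langle A_\alpha,u\rangle\to\langle A,u\rangle$ for every $u$ in the algebraic tensor product, that is, for all finite sums of elementary tensors. This subspace is dense in the completed projective tensor product. Given $z\in\multiten{n}X_j$ and $\eps>0$, pick $u$ in the algebraic tensor product with $\|z-u\|_\pi<\eps$. Then
\[
|\langle A_\alpha-A,z\rangle|\leq (M+\|A\|)\eps+|\langle A_\alpha-A,u\rangle|,
\]
and the last term tends to zero. Since $\eps$ is arbitrary, $A_\alpha\to A$ weak-star. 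This is the only place where boundedness is needed.

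\textbf{The symmetric cases.} For $\mathcal{P}(^nX;\K)=(\polten{n}X)^*$, the relevant dense set is the span of the symmetric powers $x\otimes\cdots\otimes x$. The pairing is $\langle P,x\otimes\cdots\otimes x\rangle=P(x)$, so pointwise convergence of the polynomials is exactly convergence on these generators. One must check that these tensors span a dense subspace of $\polten{n}X$. This follows from the polarization formula, which writes every symmetric tensor $\sigma^n(x_1\otimes\cdots\otimes x_n)$ as a linear combination of powers $(\sum \eps_i x_i)^{\otimes n}$, together with the fact that the symmetric tensors span a dense subspace of $\polten{n}X$ by construction.

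For $\mathcal{L}_s(^nX;\K)=(\symmultiten{n}X)^*$, the predual is $\sigma^n(\multiten{n}X)$. This is densely spanned by the tensors $\sigma^n(x_1\otimes\cdots\otimes x_n)$, because $\sigma^n$ is a continuous projection and so maps a dense subspace onto a dense subspace of its range. On these generators,
\[
\langle B,\sigma^n(x_1\otimes\cdots\otimes x_n)\rangle=B(x_1,\ldots,x_n)
\]
by the symmetry of $B$. So pointwise convergence again gives convergence on a densely spanning set, and the same $\eps$-argument applies.

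\textbf{Main obstacle.} The only step requiring care is the density claim in the polynomial case: identifying the polynomial predual norm correctly and invoking polarization. Everything else is routine.
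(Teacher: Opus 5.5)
Your proposal is correct and follows the same route as the paper: pointwise convergence is exactly convergence on the (symmetric) elementary tensors, whose linear span is dense in the respective predual, and boundedness of the net extends this to the whole predual. You make explicit two points the paper only asserts, namely the $\varepsilon$-approximation estimate and the density of the span of the powers $x^{\otimes n}$ in the polynomial predual via polarization.
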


\begin{proof}
We first consider homogeneous polynomials. Let $(P_\alpha)\subseteq\mathcal{P}(^nX;\K)$ be a bounded net.  Since $P_\alpha(x)-P(x)=\langle P_\alpha-P,x^{\otimes n}\rangle$, the pointwise convergence is equivalent to the convergence of elementary tensors $x^{\otimes n}$. Since the linear span of the tensors $x^{\otimes n}$ is dense in $\polten{n}X$ and $(P_\alpha-P)$ is bounded in norm, convergence on this dense subspace extends to the whole predual. 

For multilinear forms, one uses the elementary tensors $x_1\otimes\cdots\otimes x_n$, whose linear span is dense in $\multiten{n}X$. On the other hand, the proof for symmetric multilinear forms is the same after applying the symmetrization projection and using the density of symmetrized elementary tensors in $\symmultiten{n}X$.
\end{proof}

Finally, we record the following result which relates the $w^*$-KK in $\mathcal{L}(^n X; \K)$ with the same property in $\mathcal{L}_s(^n X; \K)$.

\begin{fact}\label{prop:multi implies simetric}
Let $X$ be a Banach space over $\K$. Assume that $\mathcal{L}(^nX;\K)$ has the (sequential) $w^*$-KK, then $\mathcal{L}_s(^nX;\K)$ has the (sequential) $w^*$-KK. The same result is true for the (sequential) $w^*$-UKK.
\end{fact}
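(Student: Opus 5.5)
The plan is to view $\mathcal{L}_s(^nX;\K)$ as a closed subspace of $\mathcal{L}(^nX;\K)$ with the same norm, and to check that the two weak-star topologies agree on it, at least on bounded sets. Both Kadec-Klee properties are then inherited by passing to this subspace.

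First I would check the isometric inclusion $\mathcal{L}_s(^nX;\K)\subseteq \mathcal{L}(^nX;\K)$: the norm on $\mathcal{L}_s$ is by definition the restriction of the multilinear norm. In particular, unit spheres and unit balls of $\mathcal{L}_s$ are the intersections of those of $\mathcal{L}$ with $\mathcal{L}_s$.

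Next comes the topological comparison. By Lemma~\ref{lem:wstar}, on bounded nets weak-star convergence in $\mathcal{L}_s(^nX;\K)$, with predual $\symmultiten{n}X$, and in $\mathcal{L}(^nX;\K)$, with predual $\multiten{n}X$, are both equivalent to pointwise convergence on $X^n$. So a bounded net in $\mathcal{L}_s$ converging weak-star there to some $A\in\mathcal{L}_s$ also converges weak-star to $A$ in $\mathcal{L}(^nX;\K)$.

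For the $w^*$-KK, take a net $(A_\alpha)\subseteq S_{\mathcal{L}_s}$ with $A_\alpha\xrightarrow{w^*}A\in S_{\mathcal{L}_s}$. By the two previous steps this is a net in $S_{\mathcal{L}(^nX;\K)}$ converging weak-star there to a point of the sphere. The hypothesis then gives $\|A_\alpha-A\|\to0$, and this is the same norm in both spaces. The sequential version is identical with sequences in place of nets.

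For the $w^*$-UKK I would use Definition~\ref{def:wstar-ukk} directly rather than Proposition~\ref{obs:wstar-ukk}. Given $\eps>0$, take the $\delta$ supplied for $\mathcal{L}(^nX;\K)$. Let $(A_\alpha)\subseteq B_{\mathcal{L}_s}$ be a net with no maximal index, weak-star convergent in $\mathcal{L}_s$ to $A$, with $\sep(A_\alpha)\geq\eps$. It lies in $B_{\mathcal{L}(^nX;\K)}$, is bounded, and by Lemma~\ref{lem:wstar} converges weak-star there to $A$. Its separation is computed with the same norm, so the hypothesis gives $\|A\|\leq 1-\delta$. The same $\delta$ therefore works, and the sequential case again follows verbatim.

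I expect the only point needing care to be the second step: the identification of the two weak-star topologies via Lemma~\ref{lem:wstar} only holds for bounded nets. Every net appearing in either definition is contained in the unit ball, so this restriction is harmless.
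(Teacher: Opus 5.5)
Your proof is correct and follows essentially the paper's route. Both arguments rest on the isometric inclusion $\mathcal{L}_s(^nX;\K)\subseteq\mathcal{L}(^nX;\K)$ together with the fact that weak-star convergence of symmetric forms is the same in either space. The only difference is how that fact is checked: the paper uses the symmetrization projection, via $L(z)=L(\sigma^n z)$ for $z$ in the full projective tensor product, which works for arbitrary nets, whereas you pass through Lemma~\ref{lem:wstar} and pointwise convergence. Your route requires boundedness, and you rightly note that this is automatic since every net involved lies in the unit ball.
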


\begin{proof} This result is a consequence of the fact that given a net $(L_\alpha)\subset S_{\mathcal{L}_s(^nX;\K)}$ and  $L\in S_{\mathcal{L}_s(^nX;\K)}$, then $L_\alpha\xrightarrow{w^*}L$ if and only if the same convergence holds when they are regarded as elements of $\mathcal{L}(^nX;\K)$. Indeed, let
\begin{equation*}
\sigma^n: \multiten{n}X\rightarrow \symmultiten{n}X
\end{equation*}
be the symmetrization projection. Since $L_\alpha$ and $L$ are symmetric, we have that $L_\alpha(z)=L_\alpha(\sigma^n z)$ and $L(z)=L(\sigma^n z)$ for every $z\in\multiten{n}X$. Therefore, $L_\alpha(z)\rightarrow L(z)$  $\forall z\in\multiten{n}X$ if and only if $L_\alpha(w)\rightarrow L(w)$ $\forall w\in \symmultiten{n} X$. 
\end{proof}

\section{Concentration theorems}\label{sec: concentration}

One of the main contributions of this article is a family of concentration theorems for $2$-homogeneous polynomials and bilinear forms on $p$-convex Banach sequence lattices. Roughly speaking, these results show that if a polynomial or bilinear form almost attains its norm at vectors supported on finitely many coordinates, then most of the polynomial is already determined by those coordinates. In other words, the contribution coming from the remaining coordinates is small.

Beyond their role in the present work, these concentration theorems are of independent interest. They provide a localization principle for $2$-homogeneous polynomials and bilinear forms, showing that, under suitable geometric assumptions on the underlying Banach sequence lattice, the global behavior of these mappings is controlled by finite-dimensional pieces. We expect that this phenomenon may find further applications in the geometry of spaces of homogeneous polynomials and multilinear forms as well as in problems concerning norm-attainment and finite-dimensional approximation.

These concentration results also constitute the main technical ingredient for the weak-star Kadec--Klee properties established in the next section. Indeed, once concentration is available, the passage to finite-dimensional restrictions allows one to exploit the fact that weak-star and norm topologies coincide on finite-dimensional spaces.

We divide this section into two parts, devoted to the complex and real settings, respectively. Before treating them separately, we introduce the notation and basic definitions used throughout. In particular, we adopt the terminology of \hyperref[complex-Banach-sequence-lattice]{Definition \ref*{complex-Banach-sequence-lattice}} and \hyperref[def:r-convex]{Definition \ref*{def:r-convex}}, which provides a common framework for a broad range of applications of our results.

\begin{definition}\label{complex-Banach-sequence-lattice} A Banach sequence lattice is a linear subspace $X \subseteq \K^{\N}$ containing $c_{00}$ which is a Banach space and has the following property: whenever $x \in X$ and $y \in \K^{\N}$ satisfy $|y(k)| \leq |x(k)|$ for every $k \in \N$, then $y \in X$ and $\|y\|_X \leq \|x\|_X$.
\end{definition}

Let $X$ be a  Banach sequence lattice. For a finite set $F \subseteq \N$, the coordinate projection is denoted by 
\begin{equation*}
    \pi_F(x) := \sum_{k \in F} x(k) e_k 
\end{equation*}
for every $x \in X$. By the lattice property, we have that every $\pi_F$ is contractive for every finite set $F \subseteq \N$. In other words, $\|\pi_F(x)\|_X \leq \|x\|_X$ for every $x \in X$, that is, $\|\pi_F\| \leq 1$ for every $F \subseteq \N$ finite.

\begin{definition}\label{def:r-convex} Let $X$ be a Banach sequence lattice. For $1 \leq r < \infty$, the space $X$ is said to be $r$-convex with constant one if 
\begin{equation*} 
    \left\| \left( \sum_{j=1}^m |x^j|^r \right)^{1/r} \right\|_X \leq \left( \sum_{j=1}^m \|x^j\|_X^r \right)^{1/r}
\end{equation*}
for every finite family $x^1, \ldots, x^m \in X$. 
\end{definition}

We will use the fact that $p$-convexity with constant one implies $r$-convexity with constant one whenever $1 \leq r \leq p$. This follows from \cite[Proposition 1.d.5, page 49]{LT2}. In particular, a space which is $p$-convex with constant one for some $p>2$ is also 2-convex with constant one. It was proved in \cite{R} that $d(w,p)$ is $r$-convex with constant one for every $1 \leq r \leq p$. For the analogous result for the Garling spaces one can check \cite[Theorem 3.1.(b)]{AADK}. We will be using these results without any explicit reference. 

\subsection{The complex case} In this section, we assume throughout that $X$ is a complex Banach sequence lattice on $\N$. The following elementary lemma is well-known. Since we were unable to locate a direct reference and its proof is short, we include it for the reader's convenience. The constant $1/2$ is not optimal, but its precise value is irrelevant for our purposes, so we settle for this simpler estimate.

\begin{lemma} \label{lemma-sign-estimate} Let $z_1, \ldots, z_N \in \C$. There are signs $\sigma_1, \ldots, \sigma_N \in \{-1,1\}$ such that
\begin{equation*}
    \left| \sum_{k=1}^{N} \sigma_k z_k \right| \geq \frac{1}{2} \sum_{k=1}^N |z_k|.
\end{equation*}
\end{lemma}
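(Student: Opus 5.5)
We will prove Lemma~\ref{lemma-sign-estimate} by a pigeonhole argument on the real and imaginary parts, reducing the complex estimate to a one-dimensional one where the choice of signs is obvious. Write $z_k = a_k + i b_k$ with $a_k, b_k \in \R$. Since $|z_k| \leq |a_k| + |b_k|$, we get
\begin{equation*}
\sum_{k=1}^N |z_k| \leq \sum_{k=1}^N |a_k| + \sum_{k=1}^N |b_k|.
\end{equation*}
Hence at least one of the two sums on the right is $\geq \frac{1}{2}\sum_{k=1}^N |z_k|$. By symmetry, we assume without loss of generality that $\sum_k |a_k| \geq \frac12 \sum_k |z_k|$. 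In the other case we run the same argument with $b_k$ in place of $a_k$, taking imaginary parts.

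Next we choose $\sigma_k := \operatorname{sign}(a_k)$, with the convention $\sigma_k = 1$ when $a_k = 0$. Then $\sigma_k a_k = |a_k|$ for every $k$, and therefore
\begin{equation*}
\left| \sum_{k=1}^N \sigma_k z_k \right| \geq \left| \re \sum_{k=1}^N \sigma_k z_k \right| = \sum_{k=1}^N \sigma_k a_k = \sum_{k=1}^N |a_k| \geq \frac{1}{2} \sum_{k=1}^N |z_k|,
\end{equation*}
which is the claimed inequality. In the imaginary case, we use $|w| \geq |\im w|$ instead, with $\sigma_k := \operatorname{sign}(b_k)$.

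There is no genuine obstacle. The only point needing care is that the signs must be real, in $\{-1,1\}$, and not unimodular complex numbers. This is exactly why we project onto a single real direction (the real or the imaginary axis) rather than aligning each $z_k$ with a common complex phase. The constant $1/2$ comes from splitting between only two directions; averaging over all directions would improve it to $1/\pi$, but as the paper notes this is irrelevant.
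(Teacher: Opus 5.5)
Your proof is correct and is essentially the same as the paper's. You bound $\sum_k |z_k|$ by $\sum_k |\re z_k| + \sum_k |\im z_k|$, keep the larger of the two sums, and choose the signs $\sigma_k \in \{-1,1\}$ so that those real (or imaginary) parts become nonnegative.

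One small correction to your closing aside: averaging over all directions gives the constant $2/\pi$, the mean value of $|\cos|$, not $1/\pi$. Note that $1/\pi < 1/2$, so it would not be an improvement; $1/\pi$ is the constant in the analogous lemma where one selects a subset instead of signs.
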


\begin{proof} Since
$$\sum_{k=1}^N |z_k| \leq \sum_{k=1}^N |\im(z_k)| + \sum_{k=1}^N|\re(z_k)|$$ we may assume that
$$\frac{1}{2}\sum_{k=1}^N |z_k| \leq \sum_{k=1}^N  |\re(z_k)|$$
(the proof with the imaginary parts is completely analogous). Choose signs so that 
$\re(\sigma_k z_k)=|\re(z_k)|\ge 0$. Then
\begin{eqnarray*}
    \left| \sum_{k=1}^{N} \sigma_k z_k \right| &\geq& \left|  \re\left(\sum_{k=1}^{N} \sigma_k z_k \right)\right|=\left|\sum_{k=1}^{N} \re(\sigma_k z_k) \right| \\
    &=&\sum_{k=1}^{N} |\re(z_k)|  \geq  \frac{1}{2}\sum_{k=1}^N |z_k|.\
\end{eqnarray*}
\end{proof}

We begin with the $2$-homogeneous case. The following theorem shows that whenever a $2$-homogeneous polynomial almost attains its norm at a finitely supported vector, it is uniformly close to the polynomial obtained by restricting it to the support of that vector.

\begin{theorem} \label{lemma-p-convexity} Let $X$ be a complex Banach sequence lattice. Suppose that $X$ is $p$-convex with constant one for some $p > 2$. For every $\eps>0$, there exists $\eta > 0$ such that the following holds: whenever $F \subseteq \N$ is finite, $x_0 \in S_X$ is supported on $F$ and $Q \in B_{\mathcal{P}(^2X;\C)}$ satisfies $|Q(x_0)| > 1 - \eta$, then we have that 
\begin{equation*}
    \|Q - Q \circ \pi_F\| < \eps.
\end{equation*}
\end{theorem}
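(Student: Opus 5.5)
The natural strategy is a perturbation argument around the near-norming vector $x_0$. Write $G=\N\setminus F$, and for $x\in B_X$ put $u=\pi_F x$ and $v=x-u$, which is supported on $G$. Then
\[
Q(x)-Q(\pi_F x)=2\check Q(u,v)+Q(v),
\]
so it suffices to show that $|Q(v)|$ and $|\check Q(u,v)|$ are small, uniformly in $u\in B_X$ supported on $F$ and $v\in B_X$ supported on $G$. The only information available is $|Q(x_0)|>1-\eta$, $\|Q\|\leq 1$, and the $p$-convexity with constant one.

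\textbf{Step 1 (the pure tail term and the cross term with $x_0$).} Fix $v\in B_X$ supported on $G$, a small $t>0$ and $\lambda\in\T$. Since $x_0$ and $v$ have disjoint supports, $|x_0+t\lambda v|=(|x_0|^p+t^p|v|^p)^{1/p}$ pointwise. The $p$-convexity then gives
\[
\|x_0+t\lambda v\|\leq (1+t^p)^{1/p},
\]
and hence
\[
|Q(x_0)+2t\lambda\check Q(x_0,v)+t^2\lambda^2Q(v)|\leq (1+t^p)^{2/p}\leq 1+Ct^p .
\]
I would average over $\pm\lambda$ to kill the linear term, and choose $\lambda^2$ so that $\lambda^2Q(v)$ is aligned with $Q(x_0)$. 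This yields $1-\eta+t^2|Q(v)|\leq 1+Ct^p$, so $|Q(v)|\leq \eta t^{-2}+Ct^{p-2}$. Taking $t=\eta^{1/p}$ gives $|Q(v)|\lesssim \eta^{1-2/p}$, and this is exactly where $p>2$ enters. Next I would choose $\lambda$ to align the linear term instead, absorbing the quadratic term by the bound just obtained. This gives $2t|\check Q(x_0,v)|\leq \eta+t^2+Ct^p$, hence $|\check Q(x_0,v)|\lesssim\sqrt\eta$ with $t=\sqrt\eta$.

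\textbf{Step 2 (the cross term $\check Q(u,v)$ for general $u$ supported on $F$).} This is the main obstacle. The vector $u$ may live on coordinates where $x_0$ is tiny, so Step 1 does not apply directly. My plan is to write
\[
\check Q(u,v)=\sum_{k\in F}u(k)\check Q(e_k,v)
\]
and to use Lemma~\ref{lemma-sign-estimate} to choose signs $\sigma_k$ with
\[
\Bigl|\sum_k\sigma_k u(k)\check Q(e_k,v)\Bigr|\geq\tfrac12\sum_k|u(k)\check Q(e_k,v)|.
\]
Since $|u_\sigma|=|u|$ for the sign-changed vector $u_\sigma=\sum_k\sigma_ku(k)e_k$, the lattice property keeps $u_\sigma\in B_X$. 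This reduces the problem to controlling the ``absolute'' quantity $\sum_k|u(k)||\check Q(e_k,v)|$. To control it, I would test $Q$ on vectors of the form $x_0+s\lambda u_\sigma+t\mu v$ and estimate their norms through the $p$- and $2$-convexity. The $v$-part is disjoint from the rest, so its contribution is again of order $t^p$. The $u_\sigma$-part overlaps $x_0$, so here I would compare the pointwise modulus with $(|x_0|^2+s^2|u|^2)^{1/2}$ after averaging over the signs $\pm\lambda$, using the $2$-convexity. Finally, I would bound the resulting quadratic expression in $(s,t)$ by $\|Q\|\leq1$ together with Step 1. The point would be that if $\check Q(u_\sigma,v)$ were large, then a suitable choice of $s,t,\lambda,\mu$ would push $|Q|$ above the admissible norm bound.

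\textbf{Step 3 (conclusion).} Combining Steps 1 and 2 gives $\sup_{x\in B_X}|Q(x)-Q(\pi_Fx)|\leq \phi(\eta)$ with $\phi(\eta)\to0$ as $\eta\to0$, and $\phi$ depends only on $p$. Choosing $\eta$ with $\phi(\eta)<\eps$ finishes the proof. The delicate point is Step 2: it must use the near-maximality at $x_0$ in a way that also controls coordinates of $F$ where $x_0$ is negligible. If the direct averaging above fails, I would first try enlarging the test vector to $x_0+s\lambda u_\sigma$, normalized, and run the Step 1 argument with it in place of $x_0$, checking that $|Q|$ stays close to $1$ there.
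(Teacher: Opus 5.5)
\textbf{The gap is in Step 2.} Your Step 1 is correct and is the paper's first step. The paper uses the normalized vector $(1-t^p)^{1/p}x_0+\lambda t v$, which is equivalent. The failure is at the sentence where you bound the test vectors by comparing with $(|x_0|^2+s^2|u|^2)^{1/2}$ ``after averaging over the signs $\pm\lambda$''. The estimate $|Q(z)|\le\|z\|^2$ must be applied to each of the two vectors $z_\pm=x_0\pm(s\lambda u_\sigma+t\mu v)$ separately. Averaging cancels the linear term of $Q$, but it does not average the norms. The pointwise identity $\frac12(|a+b|^2+|a-b|^2)=|a|^2+|b|^2$ controls only the mean of the squared moduli, not each one.

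Individually, $\|x_0\pm s\lambda u_\sigma\|$ can be of size $1+cs$. Your inequality then reads $2st|\check Q(u_\sigma,v)|\le\eta+2cs+O(s^2+t^p)$, which gives nothing. No global phase $\lambda$ rescues this. In $c_0(\C)$, take $x_0=e_1+e_2$ and $u=e_1+ie_2$. Then for every $\lambda\in\T$,
$\max_{\pm}\|x_0\pm s\lambda u\|^2=1+s^2+2s\max(|\re\lambda|,|\im\lambda|)\ge1+\sqrt2\,s+s^2$.

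Your fallback has the same defect. To recover $\check Q(u_\sigma,v)$ from Step 1 at the normalized vector $x_1$, you must divide by $s$. So you would need $1-|Q(x_1)|=o(s^2)$, and nothing in your argument controls the first-order growth of $\|x_0+s\lambda u_\sigma\|$ against that of $|Q|$.

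\textbf{The missing idea is a coordinatewise rotation.} Replace $u(k)$ by $\omega_k u(k)$, with $\omega_k\in\T$ chosen so that $\re\bigl(\overline{x_0(k)}\,\omega_k u(k)\bigr)=0$. Then $|x_0(k)+s\omega_k u(k)|^2=|x_0(k)|^2+s^2|u(k)|^2$ exactly, for every $k$. Two consequences follow:
\begin{itemize}
\item $2$-convexity gives $\|x_0+s\tilde u\|\le(1+s^2)^{1/2}$ for each test vector individually.
\item Disjointness and $p$-convexity give $\|x_0+s\tilde u+t\mu v\|^2\le\bigl((1+s^2)^{p/2}+t^p\bigr)^{2/p}\le 1+s^2+\frac2p t^p$.
\end{itemize}
Each $\omega_k$ is determined only up to sign. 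This is exactly where Lemma~\ref{lemma-sign-estimate}, with complex numbers and real signs, enters. Choosing those signs gives $|\check Q(\tilde u,v)|\ge\frac12\sum_k|u(k)||\check Q(e_k,v)|\ge\frac12|\check Q(u,v)|$, with $|\tilde u|=|u|$ so that $\tilde u\in B_X$.

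Your reduction to the absolute sum is the right preparation. Its real purpose is to free the phases of $u$, so they can be made orthogonal to those of $x_0$. This is the second point where complex scalars are indispensable; the statement is false over $\R$.

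\textbf{Finishing the argument.} With $\tilde u$ in hand, average over $\pm(s\tilde u+t\mu v)$, align $\mu$, and take real parts. This gives $2st|\check Q(\tilde u,v)|\le\eta+2s^2+\frac2p t^p+t^2|Q(v)|$. Setting $s=t^{p/2}$ and using your Step 1 bound on $|Q(v)|$ with $p>2$ closes the argument. This is what the paper does, except that it argues by contradiction along sequences rather than with explicit rates, which is an inessential difference.
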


\begin{proof} Suppose that the conclusion is false. Then, there exist $\eps > 0$, finite sets $F_m \subseteq \N$, vectors $x_m \in S_X$ supported on $F_m$ and polynomials $Q_m \in B_{\mathcal{P}(^2 X; \C)}$ such that 
\begin{equation} \label{eq2}
|Q_m(x_m)| > 1 - \frac{1}{m} 
\end{equation}
and 
\begin{equation} \label{eq3}
\|Q_m - Q_m \circ \pi_{F_m}\| \geq \eps
\end{equation}
for every $m \in \N$. Notice that, by multiplying each $Q_m$ by a modulus-one scalar, we may assume that $Q_m(x_m) > 0$ without affecting the inequality (\ref{eq3}).

Let us give the general idea of the proof. Let $\check{Q}_m$ be the symmetric bilinear form associated with $Q_m$. If $z \in X$, then we can write
\begin{equation*}
    z = \pi_{F_m}(z) + (\id - \pi_{F_m})(z)
\end{equation*}
and the quadratic expansion gives 
\begin{equation} \label{final}
    Q_m(z) = Q_m(\pi_{F_m}z) + 2 \check{Q}_m(\pi_{F_m}z, (\id - \pi_{F_m})z) + Q_m((\id - \pi_{F_m})z)
\end{equation}
In this last expression, we have that $Q_m(\pi_{F_m}z)$ depends only on the coordinates in $F_m$ while $Q_m((\id - \pi_{F_m})z)$ depends only on the coordinates outside $F_m$. Notice also that $2 \check{Q}_m(\pi_{F_m}z, (\id - \pi_{F_m})z)$ combines one vector supported in $F_m$ and one vector supported outside $F_m$. In what follows, we will first control the part of $Q_m$ supported outside $F_m$ and then we will take care of the mixed part.

For a fixed $m$, fix $0 < t < 1$ and let $y \in B_X$ be supported on $\N \setminus F_m$. Since $x_m$ and $y$ have disjoint supports, for every $\lambda \in \T$, we have 
\begin{equation*}
    |(1 - t^p)^{1/p} x_m(k) + \lambda t y(k)|^p = (1-t^p)|x_m(k)|^p + t^p|y(k)|^p
\end{equation*}
for every $k \in \N$. Since $X$ is a Banach sequence lattice, we have, by using also the $p$-convexity with constant one, that
\begin{eqnarray*}
\|(1-t^p)^{1/p} x_m + \lambda t y \|_X &=& \| (|(1-t^p)^{1/p} x_m|^p + |\lambda t y |^p)^{1/p}\|_X \\
&\leq& \left( \|(1-t^p)^{1/p} x_m\|_X^p + \| \lambda t y \|_X^p \right)^{1/p} \\
&=& \left( (1-t^p) \|x_m\|_X^p + t^p \|y\|_X^p \right)^{1/p} \\
&\leq& \left( (1 - t^p) + t^p \right)^{1/p} \\
&=& 1
\end{eqnarray*}
for every $\lambda \in \T$. The same estimate holds with $\lambda$ replaced by $-\lambda$. Then, since $x_m, y \in B_X$ and $\|Q_m\| \leq 1$, we have that 
\begin{equation*}
    |Q_m((1 - t^p)^{1/p} x_m + \lambda t y)| \leq 1 \ \ \ \mbox{and} \ \ \ |Q_m((1-t^p)^{1/p} x_m - \lambda t y)| \leq 1.
\end{equation*}
Therefore,
\begin{equation*}
    \left| \frac{Q_m((1-t^p)^{1/p} x_m + \lambda t y) + Q_m((1-t^p)^{1/p} x_m - \lambda t y)}{2} \right| \leq 1.
\end{equation*}
which implies, using the binomial expansion, that 
\begin{equation*}
    |(1 - t^p)^{2/p} Q_m(x_m) + \lambda^2 t^2 Q_m(y)| \leq 1
\end{equation*}
for every $\lambda \in \T$. Now, choose $\lambda \in \T$ so that $\lambda^2 Q_m(y) = |Q_m(y)|$. Since  $Q_m(x_m)>0$,
\begin{equation*}
    (1 - t^p)^{2/p} Q_m(x_m) + t^2 |Q_m(y)| \leq 1 
\end{equation*}
which implies that 
\begin{equation*}
    |Q_m(y)| \leq \frac{1 - (1-t^p)^{2/p}Q_m(x_m)}{t^2}
\end{equation*}
for every $0 < t < 1$, for every $m \in \N$ and for every $y \in B_X$ with $\supp y \subseteq \N \setminus F_m$. We can take the supremum over all such $y$ to get 
\begin{equation*}
    \sup_{\substack{\|y\|_X\leq 1\\
\operatorname{supp}(y)\cap F_m=\emptyset}} |Q_m(y)| \leq \frac{1 - (1-t^p)^{2/p} Q_m(x_m)}{t^2} \stackrel{(\ref{eq2})}{<} \frac{1 - (1-t^p)^{2/p}\left(1 - \frac{1}{m}\right)}{t^2}
\end{equation*}
for every $m \in \N$. Now, we take the upper limit in $m$ and obtain
\begin{equation*} 
\limsup_{m \rightarrow \infty} \sup_{\substack{\|y\|_X\leq 1\\
\operatorname{supp}(y)\cap F_m=\emptyset}} |Q_m(y)| \leq \frac{1 - (1-t^p)^{2/p}}{t^2}
\end{equation*}
for every $0 < t < 1$ fixed. Since $p > 2$, we have that 
\begin{equation*}
    \lim_{t \rightarrow 0^+} \frac{1 - (1-t^p)^{2/p}}{t^2} = 0
\end{equation*}
(notice that if $p=2$, this limit would be 1!). Therefore, we have
\begin{equation} \label{eq4}
\limsup_{m \rightarrow \infty} \sup_{\substack{\|y\|_X\leq 1\\
\supp (y)\cap F_m=\emptyset}} |Q_m(y)| = 0.
\end{equation}

We now show that the mixed part converges uniformly to zero. Otherwise, after passing to a subsequence and decreasing $\eps$ if necessary, there are vectors $a_m, y_m \in B_X$ such that $\supp a_m \subseteq F_m$, $\supp y_m \cap F_m = \emptyset$ and $|2 \check{Q}_m(a_m, y_m)| \geq \eps$ for every $m \in \N$. Fix $m \in \N$. So, $y_m$ is also fixed. Consider the mapping $a \mapsto 2 \check{Q}_m(a,y_m)$ defined on vectors $a$ supported on $F_m$. Since $\check{Q}_m$ is bilinear and $y_m$ is fixed, this map is linear in $a$. Now, if $a$ is supported on a finite set $F_m$, then we can write

\begin{equation*}
    a = \sum_{k \in F_m} a(k) e_k. 
\end{equation*}
Using linearity in the first variable, we obtain
\begin{eqnarray*}
    2 \check{Q}_m(a,y_m) = 2 \check{Q}_m\left( \sum_{k \in F_m} a(k) e_k, y_m \right) &=& \sum_{k \in F_m} a(k) \cdot 2 \check{Q}_m(e_k, y_m).
\end{eqnarray*}
This means we can write 
\begin{equation*}
    2 \check{Q}_m(a, y_m) = \sum_{k \in F_m} c_{m,k} a(k)
\end{equation*}
for every vector $a$ supported on $F_m$ for some scalar $c_{m,k}$.

 For each $k \in F_m$, with fixed $m$, multiply each coordinate of $a_m$ by a scalar of modulus one such that for the resulting vector $\tilde{a}_m$, we have 
 \begin{equation}\label{eq: a ort to x}
     \re (\overline{x_m(k)} \tilde{a}_m(k)) = 0.
 \end{equation}Moreover, by \hyperref[lemma-sign-estimate]{Lemma \ref*{lemma-sign-estimate}}, eventually changing the signs of $\tilde{a}_m(k)$ we may also assume that 
\begin{eqnarray*}
|2 \check{Q}_m(\tilde{a}_m, y_m)|  &=& 
\left|\sum_{k \in F_m} c_{m,k}\tilde{a}_m(k)\right| 
\geq \frac{1}{2}\sum_{k \in F_m} \left|c_{m,k}\tilde{a}_m(k)\right|\\
&=&\frac{1}{2}\sum_{k \in F_m} \left|c_{m,k}a_m(k)\right| 
\ge \frac{1}{2}|2\check{Q}_m(a_m, y_m)|\geq \frac{\varepsilon}{2}
\end{eqnarray*}
Since $|\tilde{a}_m(k)|=|a_m(k)|$, for every $k$, we have that $\|\tilde{a}_m\|_X = \|a_m\|$. By \eqref{eq: a ort to x}, for any $s > 0$,
\begin{eqnarray*}
    |x_m(k) + s \tilde{a}_m(k)|^2 &=& |x_m(k)|^2 + s^2 |\tilde{a}_m(k)|^2 + 2s \re (\overline{x_m(k)} \tilde{a}_m(k)) \\
    &=& |x_m(k)|^2 + s^2 |\tilde{a}_m(k)|^2 
\end{eqnarray*}
for every $k \in \N$. This implies that 
\begin{equation*}
    |x_m(k) + s \tilde{a}_m(k)| = \left(|x_m(k)|^2 + s^2 |\tilde{a}_m(k)|^2 \right)^{1/2}
\end{equation*}
for every $k \in \N$ and every $s > 0$. As $p$-convexity implies 2-convexity with constant one, we have that 
\begin{equation*}
    \|x_m + s \tilde{a}_m\|_X \leq (1 + s^2)^{1/2}
\end{equation*}
for every $s > 0$. The vector $y_m$ is disjointly supported from both $x_m$ and $\tilde{a}_m$. Hence,
\begin{equation*}
    \|x_m + s \tilde{a}_m + \mu t y_m \|_X^2 \leq ((1 + s^2)^{p/2} + t^p)^{2/p}
\end{equation*}
for every $s, t > 0$ and for all $\mu \in \T$. The same estimate holds true for $x_m - s \tilde{a}_m - \mu t y_m$. As in the first part of the proof, we have that 
\begin{equation*}
    \left| \frac{Q_m(x_m + s \tilde{a}_m + \mu t y_m) + Q_m(x_m - s \tilde{a}_m - \mu t y_m)}{2} \right| \leq ((1 + s^2)^{p/2} + t^p)^{2/p}.
\end{equation*}
By the binomial expansion, the expression 
\begin{equation*}
    \frac{Q_m(x_m + s \tilde{a}_m + \mu t y_m) + Q_m(x_m - s \tilde{a}_m - \mu t y_m)}{2}
\end{equation*}
is equal to
\begin{equation*}
    Q_m(x_m) + s^2 Q_m(\tilde{a}_m) + \mu s t 2 \check{Q}_m(\tilde{a}_m, y_m) + \mu^2 t^2 Q_m(y_m).
\end{equation*}
Choose $\mu \in \T$ so that $\mu \cdot 2 \check{Q}_m(\tilde{a}_m, y_m) = |2 \check{Q}_m(\tilde{a}_m, y_m)|$. Therefore, 
\begin{equation*}
    |Q_m(x_m) + s^2 Q_m(\tilde{a}_m) + st |2 \check{Q}_m (\tilde{a}_m, y_m)| + \mu^2 t^2 Q_m(y_m)| \leq ((1+s^2)^{p/2} + t^p)^{2/p}.
\end{equation*}
Taking real parts, we obtain
\begin{equation*}
    Q_m(x_m) + s^2 \re Q_m(\tilde{a}_m) + st | 2 \check{Q}_m(\tilde{a}_m, y_m)| + t^2 \re (\mu^2 Q_m(y_m)) \leq ((1 + s^2)^{p/2} + t^p)^{2/p}.
\end{equation*}
Therefore, we get that
\begin{align*}
st \bigl|2\check{Q}_m(\widetilde{a}_m,y_m)\bigr| &\leq
\bigl((1+s^2)^{p/2}+t^p\bigr)^{2/p} -Q_m(x_m) -s^2\re Q_m(\widetilde{a}_m) \\ &\qquad -t^2\re\bigl(\mu^2Q_m(y_m)\bigr) \\
&\leq \bigl((1+s^2)^{p/2}+t^p\bigr)^{2/p}
-Q_m(x_m) +s^2 +t^2|Q_m(y_m)|.
\end{align*}
Now, notice that 
\begin{eqnarray*}
((1 + s^2)^{p/2} + t^p)^{2/p} &=& (1 + s^2) \left( 1 + \frac{t^p}{(1+s^2)^{p/2}} \right)^{2/p} \\
&\leq& (1 + s^2) \left( 1 + \frac{2}{p} \cdot \frac{t^p}{(1+s^2)^{p/2}} \right) \\
&=& 1 + s^2 + \frac{2}{p} \cdot t^p \cdot (1+s^2)^{1-\frac{p}{2}} \\
&\leq& 1 + s^2 + \frac{2}{p} \cdot t^p.
\end{eqnarray*}
Here, we have used that since $2/p < 1$, $(1+u)^{2/p} \leq 1 + \frac{2}{p}u$ for every $u \geq 0$ and also that $1-\frac{p}{2} = \frac{2-p}{2} < 0$. This implies that 
\begin{eqnarray*}
    st | 2 \check{Q}_m(\tilde{a}_m, y_m)| &\leq& 1 + s^2 + \frac{2}{p} t^p - Q_m(x_m) + s^2 + t^2|Q_m(y_m)| \\
    &=& 1 - Q_m(x_m) + 2s^2 + \frac{2}{p} t^p + t^2 |Q_m(y_m)|.
\end{eqnarray*}
Now, we calculate the $\limsup$ as $m \rightarrow \infty$ keeping $s$ and $t$ fixed. We know that $Q_m(x_m) \rightarrow 1$ and by $(\ref{eq4})$, $|Q_m(y_m)| \rightarrow 0$, so 
\begin{equation*}
    \frac{\eps}{2} st \leq \limsup_{m \rightarrow \infty} st |2 \check{Q}_m(\tilde{a}_m, y_m)| \leq 2s^2 + \frac{2}{p} t^p.
\end{equation*}
Therefore, the inequality 
\begin{equation*}
    \frac{\eps}{2} st \leq 2s^2 + \frac{2}{p} t^p 
\end{equation*}
holds for every $s,t > 0$. Take now $s = t^{p/2}$. So, $s^2 = t^p$ and 
\begin{equation*}
    \frac{\eps}{2} t^{\frac{p}{2}+1} \leq \left(2 + \frac{2}{p} \right) t^p 
\end{equation*}
which implies that 
\begin{equation*}
    \frac{\eps}{2} \leq \left(2 + \frac{2}{p} \right)t^{p - \left(\frac{p}{2}+1 \right)} = \left(2 + \frac{2}{p} \right)t^{\frac{p}{2}-1}.
\end{equation*}
Since $p > 2$, $\frac{p}{2}-1 > 0$ and $t^{\frac{p}{2}-1} \rightarrow 0$ as $t \rightarrow 0^+$. Therefore, $\frac{\eps}{2} \leq 0$, which is a contradiction. Therefore, 
\begin{equation} \label{eq7}
\limsup_{m \rightarrow \infty} \sup_{\substack{\|a\|_X \leq 1, \supp a \subseteq F_m\\
\|y\|_X \leq 1, \supp y \cap F_m = \emptyset}} |2 \check{Q}_m(a,y)| = 0.
\end{equation}

Finally, for every $z \in B_X$, we can write (\ref{final}), that is,
\begin{equation*}
    Q_m(z) - Q_m(\pi_{F_m}z) = 2 \check{Q}_m(\pi_{F_m}z, (\id - \pi_{F_m})z) + Q_m((\id - \pi_{F_m})z)
\end{equation*}
Since $\|\pi_{F_m} z\|_X \leq 1$ and $\|(\id - \pi_{F_m})z\|_X \leq 1$, we obtain
\begin{align*}
\limsup_{m\to\infty} \|Q_m-Q_m\circ\pi_{F_m}\| &\leq
\limsup_{m\to\infty} \sup_{\substack{\|y\|_X\leq1\\
    \supp(y)\cap F_m=\emptyset
}} |Q_m(y)| \\ &\quad + \limsup_{m\to\infty} \sup_{\substack{     \|a\|_X\leq1,\ \supp(a)\subseteq F_m\\ \|y\|_X\leq1,\ \supp(y)\cap F_m=\emptyset }} \bigl|2\check{Q}_m(a,y)\bigr|.
\end{align*}
By \eqref{eq4} and \eqref{eq7}, both terms on the right-hand side are equal to $0$. Hence,
\[
\limsup_{m\to\infty}\|Q_m-Q_m\circ\pi_{F_m}\|=0,
\]
which contradicts \eqref{eq3}. This completes the proof.
\end{proof}

We next prove the corresponding concentration result for bilinear forms. Although the argument follows the same general strategy as in the polynomial case, the bilinear setting requires controlling separately the two variables, which leads to a decomposition into three terms instead of two.

We will use the following notation. For finite sets $F, G \subseteq \N$, let us write 
\begin{equation} \label{notation-R-F-G}
R_{F,G} A(x,y) := A(\pi_F x, \pi_G y) 
\end{equation}
for every $(x, y) \in X \times Y$ and for every $A \in \mathcal{L}(^2 X \times Y, \K)$.

\begin{theorem}\label{thm: conc-bilin} Let $X, Y$ be complex Banach sequence lattices. Suppose that $X$ and $Y$ are $p$-convex with constant one for some $p > 2$. For every $\eps > 0$, there exists $\eta > 0$ such that the following holds: whenever $F, G \subseteq \N$ are finite, $x_0\in S_X$ and $y_0\in S_Y$ are supported on $F$ and $G$, respectively, and $A \in B_{\mathcal{L}(^2X\times Y; \C)}$ satisfies $|A(x_0, y_0)| > 1 - \eta$, then 
\begin{equation*}
    \|A - R_{F,G} \circ A\| < \eps.
\end{equation*}
\end{theorem}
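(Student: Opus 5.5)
We argue by contradiction, as in the proof of Theorem~\ref{lemma-p-convexity}. Suppose we have $\eps>0$, finite sets $F_m,G_m$, vectors $x_m\in S_X$ and $y_m\in S_Y$ supported on $F_m$ and $G_m$, and forms $A_m\in B_{\mathcal{L}(^2X\times Y;\C)}$ with $|A_m(x_m,y_m)|>1-\frac1m$ and $\|A_m-R_{F_m,G_m}A_m\|\geq\eps$. After multiplying by a unimodular scalar we may take $A_m(x_m,y_m)>0$. Writing $P=\pi_{F_m}$, $P'=\id-\pi_{F_m}$, $Q=\pi_{G_m}$ and $Q'=\id-\pi_{G_m}$, bilinearity gives
\begin{equation*}
A_m(x,y)-A_m(Px,Qy)=A_m(Px,Q'y)+A_m(P'x,Qy)+A_m(P'x,Q'y).
\end{equation*}
It therefore suffices to show that each of the three ``off-diagonal'' suprema tends to $0$ as $m\to\infty$. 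The suprema are taken over $a\in B_X$ and $b\in B_Y$ with the indicated supports.

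\emph{Step 1: the fully outside block $A_m(P'x,Q'y)$.} Take $u\in B_X$ supported off $F_m$ and $v\in B_Y$ supported off $G_m$. Test $A_m$ at $\bigl((1-t^p)^{1/p}x_m+\lambda t u,\ (1-t^p)^{1/p}y_m+t v\bigr)$ and at the same pair with $-\lambda$ and $-t v$. By disjointness and $p$-convexity with constant one, all these vectors lie in the unit balls. Averaging the two evaluations kills the cross terms and yields
\begin{equation*}
\bigl|(1-t^p)^{2/p}A_m(x_m,y_m)+\lambda t^2A_m(u,v)\bigr|\le 1.
\end{equation*}
Choose $\lambda$ so that $\lambda A_m(u,v)=|A_m(u,v)|$. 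Exactly as before, this gives $\limsup_m\sup|A_m(u,v)|\le (1-(1-t^p)^{2/p})/t^2$, which tends to $0$ as $t\to0^+$ because $p>2$.

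\emph{Step 2: the mixed blocks.} Consider $A_m(a,v)$ with $\supp a\subseteq F_m$ and $\supp v\cap G_m=\emptyset$; the block $A_m(u,b)$ is symmetric to it. Suppose that along a subsequence $|A_m(a_m,v_m)|\ge\eps$. Since $a\mapsto A_m(a,v_m)$ is linear, equal to $\sum_{k\in F_m}c_{m,k}a(k)$, we rotate the coordinates of $a_m$ so that $\re(\overline{x_m(k)}\tilde a_m(k))=0$. We then use Lemma~\ref{lemma-sign-estimate} to fix signs so that $|A_m(\tilde a_m,v_m)|\ge\eps/2$. By $2$-convexity, $\|x_m+s\tilde a_m\|_X\le(1+s^2)^{1/2}$, and $\|y_m+\mu t v_m\|_Y\le(1+t^p)^{1/p}$. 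Test at $(x_m+s\tilde a_m,\,y_m+\mu t v_m)$ and at $(x_m-s\tilde a_m,\,y_m-\mu tv_m)$ and average. This removes the terms $sA_m(\tilde a_m,y_m)$ and $\mu tA_m(x_m,v_m)$ and leaves
\begin{equation*}
A_m(x_m,y_m)+s\mu t A_m(\tilde a_m,v_m),
\end{equation*}
whose modulus is at most $(1+s^2)^{1/2}(1+t^p)^{1/p}\le (1+s^2/2)(1+t^p/p)$. Choose $\mu$ to make the second term positive and take real parts. Letting $m\to\infty$ gives $\frac{\eps}{2}st\le \frac{s^2}{2}+\frac{t^p}{p}+\frac{s^2t^p}{2p}$. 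Setting $s=t^{p/2}$ gives $\frac{\eps}{2}\lesssim t^{p/2-1}\to0$, a contradiction.

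The main obstacle is the point just used in Step 2. Step 1 is not actually needed for the mixed blocks, because averaging the two sign choices cancels the linear terms while the mixed term is preserved. One must check carefully that this product pairing does not reintroduce the term $s\mu tA_m(\tilde a_m,v_m)$ with the wrong sign; it does not, since both factors flip sign together. The second thing to verify is that the ball bounds only give a quadratic loss in $s$ and an $O(t^p)$ loss in $t$. That is exactly where $p>2$ enters. Once the three suprema are shown to vanish, the decomposition gives $\limsup_m\|A_m-R_{F_m,G_m}A_m\|=0$, contradicting the assumption.
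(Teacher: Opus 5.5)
Your proposal is correct and follows essentially the same route as the paper. Both arguments use the same three-block decomposition and the same $\pm$-averaging with $(1-t^p)^{1/p}$-scaled test vectors for the block lying outside both supports. For the mixed blocks, both use the same rotation/sign-selection with $2$-convexity, to get a loss of order $s^2+t^p$ and then set $s=t^{p/2}$. The only cosmetic difference is that you test with $y_m+\mu t v_m$ and bound its norm by $(1+t^p)^{1/p}$, whereas the paper rescales to $(1-t^p)^{1/p}y_m+\mu t v_m$ and uses $(1-t^p)^{1/p}\geq 1-t^p$.
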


\begin{proof}

Assume, by contradiction, that the conclusion does not hold. Then, there exist
$\eps>0$, finite sets $F_m,G_m\subseteq\mathbb N$, vectors
$x_m\in S_X$ and $y_m\in S_Y$ supported on $F_m$ and $G_m$, respectively, and bilinear forms
\(
A_m\in B_{\mathcal{L}(^2X\times Y;\mathbb C)}
\)
such that
\begin{equation}\label{eq:bil1}
|A_m(x_m,y_m)|>1-\frac1m
\end{equation}
and
\begin{equation}\label{eq:bil2}
\|A_m-R_{F_m,G_m}\circ A_m\|\geq \eps
\end{equation}
for every $m\in\mathbb N$.

Multiplying each $A_m$ by a scalar of modulus one, we may assume that
\[
A_m(x_m,y_m)>0
\]
without affecting \eqref{eq:bil2}.

For every $z\in X$, $w\in Y$, we have
\[
z=\pi_{F_m}z+(I-\pi_{F_m})z
\]
and
\[
w=\pi_{G_m}w+(I-\pi_{G_m})w.
\]
Hence, by bilinearity,
\begin{align}\label{eq:bil-decomp}
&A_m(z,w)-A_m(\pi_{F_m}z,\pi_{G_m}w) \\
={}& A_m(\pi_{F_m}z,(I-\pi_{G_m})w)
+A_m((I-\pi_{F_m})z,\pi_{G_m}w) \nonumber\\
&+A_m((I-\pi_{F_m})z,(I-\pi_{G_m})w). \nonumber
\end{align}
Therefore, it is enough to prove that each of the three terms in the right-hand
side converges uniformly to zero.

We first consider the term supported outside both $F_m$ and $G_m$. Let
$u\in B_X,v\in B_Y$ satisfy
\[
\supp u\cap F_m=\emptyset,
\qquad
\supp v\cap G_m=\emptyset .
\]
Fix $0<t<1$ and $\lambda,\mu\in\mathbb T$. Since the supports of $x_m$ and $u$
are disjoint, and the supports of $y_m$ and $v$ are disjoint, the
$p$-convexity gives
\[
\|(1-t^p)^{1/p}x_m+\lambda tu\|_X\leq1
\]
and
\[
\|(1-t^p)^{1/p}y_m+\mu tv\|_Y\leq1 .
\]
Consequently,
\[
\left|
A_m((1-t^p)^{1/p}x_m+\lambda tu,
(1-t^p)^{1/p}y_m+\mu tv)
\right|\leq1 .
\]
The same estimate holds after replacing $(\lambda,\mu)$ by
$(-\lambda,-\mu)$. Taking the average of the two expressions and using
bilinearity, we obtain
\[
\left|
(1-t^p)^{2/p}A_m(x_m,y_m)
+t^2\lambda\mu A_m(u,v)
\right|\leq1 .
\]
Choosing $\lambda,\mu\in\mathbb T$ such that
\[
\lambda\mu A_m(u,v)=|A_m(u,v)|,
\]
we get
\[
|A_m(u,v)|
\leq
\frac{1-(1-t^p)^{2/p}A_m(x_m,y_m)}{t^2}.
\]
Taking the supremum over all such $u$ and $v$, and using \eqref{eq:bil1}, yields
\[
\sup_{\substack{\|u\|\leq1,\ \supp u\cap F_m=\emptyset\\
\|v\|\leq1,\ \supp v\cap G_m=\emptyset}}
|A_m(u,v)|
\leq
\frac{1-(1-t^p)^{2/p}(1-\frac1m)}{t^2}.
\]
Therefore,
\[
\limsup_{m\to\infty}
\sup_{\substack{\|u\|\leq1,\ \supp u\cap F_m=\emptyset\\
\|v\|\leq1,\ \supp v\cap G_m=\emptyset}}
|A_m(u,v)|
\leq
\frac{1-(1-t^p)^{2/p}}{t^2}.
\]
Since $p>2$,
\[
\lim_{t\to0^+}\frac{1-(1-t^p)^{2/p}}{t^2}=0,
\]
and hence
\begin{equation*}
\limsup_{m\to\infty}
\sup_{\substack{\|u\|\leq1,\ \supp u\cap F_m=\emptyset\\
\|v\|\leq1,\ \supp v\cap G_m=\emptyset}}
|A_m(u,v)|=0 .
\end{equation*}

We now show that the first mixed term converges uniformly to zero. Suppose,
otherwise, that after passing to a subsequence there exist vectors
$a_m\in B_X, v_m\in B_Y$ such that
\[
\supp a_m\subseteq F_m,\qquad
\supp v_m\cap G_m=\emptyset,
\]
and
\begin{equation*}
|A_m(a_m,v_m)|\geq \varepsilon
\end{equation*}
for every $m\in\mathbb N$.

As in the proof of the polynomial case, by a sign-selection argument we can
choose $\widetilde a_m$ such that
\[
|\widetilde a_m(k)|=|a_m(k)|
\]
and
\[
\operatorname{Re}(\overline{x_m(k)}\widetilde a_m(k))=0
\]
for every $k\in\mathbb N$, and such that
\begin{equation}\label{eq:sign}
|A_m(\widetilde a_m,v_m)|
\geq \frac{1}{2}|A_m(a_m,v_m)|
\geq \frac{1}{2}\varepsilon .
\end{equation}

The first property implies that
\[
\|\widetilde a_m\|_X=\|a_m\|_X\leq1.
\]
Moreover, since $p$-convexity with constant one implies $2$-convexity with
constant one, for every $s>0$ we have
\[
\|x_m+s\widetilde a_m\|_X\leq (1+s^2)^{1/2}.
\]

Fix $s,t>0$ and $\mu\in\mathbb T$. Since $y_m$ and $v_m$ have disjoint
supports, the $p$-convexity of $Y$ yields
\[
\|(1-t^p)^{1/p}y_m+\mu tv_m\|_Y\leq1 .
\]
Therefore,
\[
\left|
A_m(x_m+s\widetilde a_m,
(1-t^p)^{1/p}y_m+\mu tv_m)
\right|
\leq
(1+s^2)^{1/2}.
\]
The same estimate holds for
\[
A_m(x_m-s\widetilde a_m,
(1-t^p)^{1/p}y_m-\mu tv_m).
\]
Hence, taking the average of these two expressions, we obtain
\[
\begin{aligned}
\Bigg|
&\frac{
A_m(x_m+s\widetilde a_m,
(1-t^p)^{1/p}y_m+\mu tv_m)
}{2}
\\
&+
\frac{
A_m(x_m-s\widetilde a_m,
(1-t^p)^{1/p}y_m-\mu tv_m)
}{2}
\Bigg|
\leq (1+s^2)^{1/2}.
\end{aligned}
\]

Using bilinearity, the expression inside the absolute value becomes
\[
(1-t^p)^{1/p}A_m(x_m,y_m)
+s\mu t A_m(\widetilde a_m,v_m).
\]

Choosing $\mu\in\mathbb T$ such that
\[
\mu A_m(\widetilde a_m,v_m)
=
|A_m(\widetilde a_m,v_m)|,
\]
 we get
\[
(1-t^p)^{1/p}A_m(x_m,y_m)
+s t |A_m(\widetilde a_m,v_m)|
\leq
(1+s^2)^{1/2}.
\]
Consequently,
\[
st |A_m(\widetilde a_m,v_m)|
\leq
(1+s^2)^{1/2}
-(1-t^p)^{1/p}A_m(x_m,y_m).
\]

Using the estimates
\[
(1+s^2)^{1/2}\leq 1+s^2
\]
and, for every $0 < t < 1$,
\[
(1 - t^p)^{1/p} \geq 1 - t^p,
\]
we obtain
\begin{eqnarray*}
st |A_m(\tilde{a}_m, v_m)| &\leq& (1 + s^2)^{1/2} - (1-t^p)^{1/p} A_m(x_m, y_m) \\
&\leq& 1 + s^2- (1-t^p)A_m(x_m, y_m) \\
&\leq& 1 - A_m(x_m, y_m) + s^2 + t^p.
\end{eqnarray*}
Taking the upper limit as $m\to\infty$, and using
\(
A_m(x_m,y_m)\longrightarrow1
\)
together with \eqref{eq:sign}, we obtain
\begin{equation*}
\frac{\eps}{2} st \leq s^2 + t^p
\end{equation*}
Taking $s=t^{p/2}$ gives
\begin{equation*}
\frac{\eps}{2} \leq 2 t^{\frac{p}{2}-1}
\end{equation*}
Since $p>2$, letting $t\to0^+$ yields a contradiction. Hence,
\[
\limsup_{m\to\infty}
\sup_{\substack{\|a\|\leq1,\ \supp a\subseteq F_m\\
\|v\|\leq1,\ \supp v\cap G_m=\emptyset}}
|A_m(a,v)|=0 .
\]

The second mixed term is obtained by the same argument, interchanging the two
variables. Therefore,
\begin{equation*}
\limsup_{m\to\infty}
\sup_{\substack{\|u\|\leq1,\ \supp u\cap F_m=\emptyset\\
\|b\|\leq1,\ \supp b\subseteq G_m}}
|A_m(u,b)|=0 .
\end{equation*}

Finally, taking the supremum in \eqref{eq:bil-decomp} over $z\in B_X$, $w\in B_Y$, and
using the three estimates above, we obtain
\[
\limsup_{m\to\infty}
\|A_m-R_{F_m,G_m}\circ A_m\|=0,
\]
which contradicts \eqref{eq:bil2}. Therefore, the result follows.
\end{proof}

\subsection{The real case} In this subsection, we focus on concentration theorems in the real setting. Although all the results remain valid over the complex field, we shall work throughout with real Banach spaces, since all the applications developed here concern the real case.

Given a Banach space $X$, we define its modulus of convexity, denoted  $\delta_X(\eps)$, by 
\begin{equation*}
    \delta_X(\eps) := \inf\left\{ 1 - \frac{\|x+y\|}{2}: x,y \in B_X \ \mbox{and} \ \|x - y\| \geq \eps \right\} 
\end{equation*}
for every $0 < \eps \leq 2$. We say that $X$ is uniformly convex if $\delta_X(\eps)> 0$ for every $\eps > 0$ (see, for instance, \cite[Definition 9.2]{FHHMZ}).

We say that a Banach space $X$ has modulus of convexity of power type $q$ for $q \geq 2$ if there exists $C > 0$ such that $\delta_X(\eps) \geq C \eps^q$ for every $0 < \eps \leq 2$. We have that $L_q$-spaces with $q \geq 2$ are examples of Banach spaces which have modulus of convexity of power type $q$. Also, if $1 < p < 2$, $L_p$ has modulus of convexity of power type 2 (see \cite[page 63]{LT2}).

The following result extends \cite[Theorem 3.1]{RZSSD} from $\ell_p$-spaces to $p$-convex Banach sequence lattices with constant one. Although our presentation has been formulated in terms of multilinear forms, we state the next result for linear operators in order to facilitate comparison with its original formulation in \cite[Theorem 3.1]{RZSSD}. Since the proof follows the same underlying argument, we omit it.

\begin{proposition} \label{real-fact-1} Let $X$ be a Banach sequence lattice. Suppose that $X$ is $p$-convex with constant one. Let $Z$ be a Banach space whose modulus of convexity has power type $2\leq q < p$. For every $\eps > 0$, there exists $\eta > 0$ with the following property: whenever $F \subseteq \N$ is finite, $x_0 \in S_X$ is supported on $F$ and $T \in B_{\mathcal{L}(X, Z)}$ satisfies $\|Tx_0\| > 1 - \eta$, we have that 
\begin{equation*}
    \|T( \id - \pi_F)\| < \eps.
\end{equation*}
\end{proposition}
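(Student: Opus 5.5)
The plan is a direct, quantitative argument; it does not need the contradiction-and-subsequence scheme used for Theorem~\ref{lemma-p-convexity}. Since $\|(\id-\pi_F)z\|_X\le\|z\|_X$ by the lattice property, it suffices to show the following: if $\|Tx_0\|>1-\eta$, then $\|Ty\|<\eps$ for every $y\in B_X$ with $\supp y\cap F=\emptyset$. The estimate will be uniform in $F$, $x_0$ and $T$.

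\textbf{Test vectors.} Fix such a $y$ and a parameter $0<t<1$. Exactly as in the first part of the proof of Theorem~\ref{lemma-p-convexity}, the vectors $x_0$ and $y$ are disjointly supported. The $p$-convexity with constant one therefore gives
\[
\bigl\|(1-t^p)^{1/p}x_0\pm t y\bigr\|_X\le 1 .
\]
Put $u_\pm:=T\bigl((1-t^p)^{1/p}x_0\pm ty\bigr)$. Since $\|T\|\le1$, both $u_\pm$ lie in $B_Z$. We also have
\[
\|u_+-u_-\|=2t\|Ty\|, \qquad \frac{u_++u_-}{2}=(1-t^p)^{1/p}Tx_0 .
\]

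\textbf{Applying uniform convexity.} This is the step replacing the complex polarization/rotation trick. By the definition of $\delta_Z$ and the power type assumption,
\[
(1-t^p)^{1/p}\|Tx_0\|\le 1-\delta_Z(2t\|Ty\|)\le 1-C\,2^q t^q\|Ty\|^q .
\]
If $Ty=0$ there is nothing to prove; otherwise $2t\|Ty\|\in(0,2]$, so the power type bound applies. Now use $(1-t^p)^{1/p}\ge 1-t^p$ and $\|Tx_0\|>1-\eta$. This gives
\[
C2^q t^q\|Ty\|^q\le t^p+\eta, \qquad\text{that is,}\qquad \|Ty\|^q\le \frac{t^{p-q}+\eta\, t^{-q}}{C2^q}.
\]

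\textbf{Choice of parameters.} Because $q<p$, we can first choose $t\in(0,1)$ so small that $t^{p-q}<C2^q\eps^q/2$. Then we choose $\eta>0$ so that $\eta t^{-q}<C2^q\eps^q/2$. With these choices $\|Ty\|<\eps$ for all admissible $y$, and hence $\|T(\id-\pi_F)\|<\eps$ (up to replacing $\eps$ by a slightly smaller value to get strict inequality after the supremum).

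The only delicate point is the balancing of exponents. The gain from convexity is of order $t^q$, while the loss from the $p$-convex normalization is of order $t^p$. This is precisely why the hypothesis $q<p$ is needed, in the same way that $p>2$ was needed in the complex case where the gain was $t^2$. I expect no further obstacles: the argument uses only real scalars and the sign choice $\pm$, so it works verbatim in the real setting.
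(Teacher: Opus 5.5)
Your argument is correct and is essentially the one the paper has in mind when it omits the proof as an adaptation of \cite[Theorem 3.1]{RZSSD}. The ingredients are the same: disjointly supported test vectors $(1-t^p)^{1/p}x_0\pm ty$ of norm at most one via $p$-convexity, exactly as in Theorem~\ref{lemma-p-convexity}; the power-type modulus of convexity of $Z$ giving a gain of order $t^q\|Ty\|^q$; and choosing first $t$, then $\eta$, using $q<p$. Your closing caveat about strictness is unnecessary: the bound $\bigl((t^{p-q}+\eta t^{-q})/(C2^q)\bigr)^{1/q}$ is uniform in $y$, $F$, $x_0$ and $T$, and is already strictly below $\eps$.
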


As pointed out in \cite[Lemma 3.6]{RZSSD} (see also \cite[Lemma 2.11]{DantasKimLeeMazzitelli}), a concentration lemma for $c_0$ is much simpler than \hyperref[real-fact-1]{Proposition~\ref*{real-fact-1}}. Indeed, we have the following result. We also omit its proof.

\begin{proposition} \label{real-fact-2} Let $Z$ be a uniformly convex Banach space. For every $\eps > 0$, there exists $\eta > 0$ such that, whenever $F \subseteq \N$ is finite, $x_0 \in S_{c_0}$ is supported on $F$ and $T \in B_{\mathcal{L}(c_0, Z)}$ satisfies $\|Tx_0\| > 1 - \eta$, we have that $\|T(\id - \pi_F)\| < \eps$.
\end{proposition}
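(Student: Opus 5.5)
I would argue by contradiction, following the pattern of Theorems \ref{lemma-p-convexity} and \ref{thm: conc-bilin}. The key feature of $c_0$ is that disjointly supported vectors combine in the sup norm: if $x_0\in S_{c_0}$ is supported on $F$ and $y\in B_{c_0}$ satisfies $\supp y\cap F=\emptyset$, then $\|x_0\pm y\|_\infty\leq 1$. So both $x_0+y$ and $x_0-y$ lie in the unit ball, and no power-type trade-off in $t$ is needed. The real content is therefore just uniform convexity of $Z$.

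Fix $\eps>0$ with $\eps\le 2$ and set $\eta:=\delta_Z(\eps)$, which is positive by uniform convexity. Let $F$, $x_0$ and $T$ be as in the statement with $\|Tx_0\|>1-\eta$, and let $y\in B_{c_0}$ be supported on $\N\setminus F$.
\begin{enumerate}[(a)]
\item Since $\|T\|\le1$, the vectors $u=T(x_0+y)$ and $v=T(x_0-y)$ both lie in $B_Z$.
\item Their midpoint is $(u+v)/2=Tx_0$, so $\|(u+v)/2\|>1-\delta_Z(\eps)$.
\item By the definition of the modulus of convexity, this forces $\|u-v\|<\eps$.
\item Since $u-v=2Ty$, we get $\|Ty\|<\eps/2$.
\end{enumerate}

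Finally, for any $z\in B_{c_0}$ the vector $y=(\id-\pi_F)z$ lies in $B_{c_0}$, because the coordinate projections are contractive, and it is supported off $F$. Taking the supremum over such $z$ gives $\|T(\id-\pi_F)\|\le\eps/2<\eps$.

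I expect no real obstacle. The only care needed is to apply the definition of $\delta_Z$ correctly: if $\|u-v\|\ge\eps$, then $\|(u+v)/2\|\le 1-\delta_Z(\eps)$, and we take the contrapositive. It also helps to note that $\delta_Z$ is nondecreasing, so that small values of $\eps$ cause no trouble.
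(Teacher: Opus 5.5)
Your proof is correct. It is essentially the simple argument the paper has in mind when it omits the proof and points to the $c_0$ concentration lemmas cited just before the statement: for $y\in B_{c_0}$ supported off $F$ one has $\|x_0\pm y\|_\infty\leq 1$, and applying the definition of $\delta_Z$ to $T(x_0\pm y)$, whose midpoint is $Tx_0$, gives $\|Ty\|<\eps/2$ with $\eta=\delta_Z(\eps)$. Your reduction to $\eps\leq 2$ is harmless; in fact the conclusion is trivial for $\eps>1$, since $\|T(\id-\pi_F)\|\leq 1$.
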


We would like now to use \hyperref[real-fact-1]{Proposition~\ref*{real-fact-1}} to get the following concentration theorem. We use the same definition as in (\ref{notation-R-F-G}) above.

We now show how \hyperref[real-fact-1]{Proposition~\ref*{real-fact-1}} can be used to obtain a concentration result for bilinear forms. The argument simply applies the operator-valued concentration theorem to each variable in turn.

\begin{theorem} \label{real-fact-3} Let $X$ and $Y$ be Banach sequence lattices. Suppose the following.
\begin{itemize}
\item[1.] $X$ is $p$-convex with constant one.
\item[2.] $Y$ is $s$-convex with constant one.
\item[3.] $Y^*$ has modulus of convexity of power type $2\leq q < p$.
\item[4.] $X^*$ has modulus of convexity of power type $2\leq r < s$.
\end{itemize}
Then, for every $\eps > 0$, there exists $\eta > 0$ with the following property. Whenever $x_0 \in S_X$, $y_0 \in S_Y$ are finitely supported with $F = \supp x_0$ and $G = \supp y_0$, and $A \in B_{\mathcal{L}(^2 X \times Y; \K)}$ satisfies $|A(x_0, y_0)| > 1 - \eta$, then 
\begin{equation*}
    \|A - R_{F,G} \circ A \| < \eps.
\end{equation*}
\end{theorem}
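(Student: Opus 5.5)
We apply Proposition~\ref{real-fact-1} twice, once in each variable, and then recombine through the decomposition \eqref{eq:bil-decomp}. Given $\eps>0$, let $\eta_1>0$ be the number from Proposition~\ref{real-fact-1} for $\eps/3$, with the lattice $X$ ($p$-convex with constant one) and target $Z=Y^*$ (modulus of convexity of power type $q<p$). Let $\eta_2>0$ be the analogous number for the lattice $Y$ ($s$-convex with constant one) and target $Z=X^*$ (power type $r<s$). Set $\eta=\min\{\eta_1,\eta_2\}$.

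Let $A$, $x_0$, $y_0$ be as in the statement. We view $A$ as the operator $T_A\colon X\to Y^*$, $T_Ax=A(x,\cdot)$, which satisfies $\|T_A\|=\|A\|\leq 1$. Then
\[
\|T_Ax_0\|\geq |A(x_0,y_0)|>1-\eta_1,
\]
so Proposition~\ref{real-fact-1} gives $\|T_A(\id-\pi_F)\|<\eps/3$, that is,
\[
|A((\id-\pi_F)z,w)|\leq \eps/3 \quad\text{for all } z\in B_X,\ w\in B_Y.
\]
Symmetrically, let $S_A\colon Y\to X^*$, $S_Ay=A(\cdot,y)$. Then $\|S_Ay_0\|>1-\eta_2$, so
\[
|A(z,(\id-\pi_G)w)|\leq\eps/3 \quad\text{for all } z\in B_X,\ w\in B_Y.
\]

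To recombine, we write
\[
A(z,w)-A(\pi_Fz,\pi_Gw)=A((\id-\pi_F)z,w)+A(\pi_Fz,(\id-\pi_G)w).
\]
Since $\|\pi_Fz\|\leq 1$, both terms are bounded by $\eps/3$, so $\|A-R_{F,G}\circ A\|\leq 2\eps/3<\eps$. Using this two-term splitting avoids having to estimate the three terms of \eqref{eq:bil-decomp} separately.

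The only point needing care is the hypothesis of Proposition~\ref{real-fact-1}. That proposition requires $x_0$ to be supported on the finite set $F$, and $F=\supp x_0$ satisfies this. It also asks for the power-type condition on the target space, which here is $Y^*$ (respectively $X^*$); this is exactly what hypotheses 3 and 4 provide. In the complex case, $\|T_Ax_0\|\ge|A(x_0,y_0)|$ holds verbatim, so the same argument applies. I do not expect any real obstacle beyond checking that the operator identifications are isometric, which the preliminaries already establish.
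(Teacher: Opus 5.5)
Your proposal is correct and is essentially the paper's proof. You apply Proposition~\ref{real-fact-1} to $x\mapsto A(x,\cdot)\in Y^*$ and to $y\mapsto A(\cdot,y)\in X^*$, take $\eta$ as the minimum of the two constants, and recombine through the splitting $A((\id-\pi_F)z,w)+A(\pi_Fz,(\id-\pi_G)w)$, exactly as the paper does. Your use of $\eps/3$ instead of the paper's $\eps/2$ only adds slack for keeping the inequality strict after taking the supremum.
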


\begin{proof} Apply \hyperref[real-fact-1]{Proposition~\ref*{real-fact-1}} with $\eps/2$ to operators from $X$ into $Y^*$. Then, apply \hyperref[real-fact-1]{Proposition~\ref*{real-fact-1}} once again to operators from $Y$ into $X^*$ with $\eps/2$. Now take $\eta > 0$ to be the minimum between them.

Let $x_0 \in S_X$ and $y_0 \in S_Y$ have finite supports $F$ and $G$, respectively. Let $A \in B_{\mathcal{L}(^2 X \times Y; \K)}$ satisfy $|A(x_0, y_0)| > 1 - \eta$. Viewing $A$ as an operator $x \mapsto A(x, \cdot)$ from $X$ into $Y^*$, we have that 
\begin{equation*}
    \|A(x_0, \cdot)\| = \sup_{y \in B_Y} |A(x_0, y)| \geq |A(x_0, y_0)| > 1 - \eta 
\end{equation*}
and \hyperref[real-fact-1]{Proposition~\ref*{real-fact-1}} gives 
\begin{equation*}
    \sup_{x \in B_X} \|A((\id - \pi_F)x, \cdot) \| < \frac{\eps}{2} 
\end{equation*}
which is equivalent to say that 
\begin{equation*}
    |A((\id - \pi_F)x, y)| < \frac{\eps}{2} 
\end{equation*}
for every $x \in B_X$ and every $y \in B_Y$. Now, for every $x \in B_X$ and $y \in B_Y$, we can write $x = \pi_Fx + (\id - \pi_F)x$ and $y = \pi_Gy + (\id - \pi_G)y$ and, therefore, by bilinearity, we obtain
\begin{eqnarray*}
|A(x,y) - A(\pi_F x, \pi_G y)| &\leq& |A(\pi_Fx, (\id - \pi_G)y)| + |A((\id - \pi_F)x, y)| \\
&<& \frac{\eps}{2} + \frac{\eps}{2} = \eps.
\end{eqnarray*}
Taking the supremum over $x \in B_X$ and $y \in B_Y$, we are done.
\end{proof}

\section{The $w^*$-UKK for polynomials and bilinear forms}\label{sec: quadratic and bilinear}

This section contains the main applications of the concentration results established in the previous section. The key idea is that if a norm-one $2$-homogeneous polynomial or bilinear form almost attains its norm at finitely supported vectors, then it is uniformly close to its finite-dimensional restriction. Since finite-dimensional ranges identify weak-star and norm topologies, this concentration phenomenon naturally leads to the $w^*$-UKK.

\subsection{Applications in the complex case} We first prove the $w^*$-UKK property for spaces of $2$-homogeneous polynomials and bilinear forms on $p$-convex Banach sequence lattices. We then show that these abstract results apply to several classical families of sequence spaces, including $c_0$, $\ell_p$, Lorentz and Garling sequence spaces, as well as to suitable direct sums. Finally, we present examples showing that the condition $p>2$ is optimal in the settings considered here (see \hyperref[cor:lp-lorentz-garling-classification]{Corollary \ref*{cor:lp-lorentz-garling-classification}}).

\begin{theorem} \label{theorem-p-convexity} Let $X$ be a complex Banach sequence lattice such that $c_{00}$ is dense in $X$. If $X$ is $p$-convex with constant one for some $p > 2$, then $\mathcal{P}(^2 X; \C)$ has the $w^*$-UKK.
\end{theorem}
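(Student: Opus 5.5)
We will use the neighborhood characterization of Proposition~\ref{obs:wstar-ukk}. Fix $\eps>0$ and let $\eta_0>0$ be given by Theorem~\ref{lemma-p-convexity} applied with $\eps/4$. We will show that $\eta:=\eta_0/2$ works. So let $Q\in B_{\mathcal{P}(^2X;\C)}$ with $\|Q\|>1-\eta$. Our goal is a weak-star neighborhood $U$ of $Q$ with $\diam(U\cap B_{\mathcal{P}(^2X;\C)})<\eps$.

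\emph{Step 1: a finitely supported almost-norming vector.} Since $c_{00}$ is dense in $X$ and $Q$ is continuous, there is a norm-one $x_0\in c_{00}$ with $|Q(x_0)|>1-\eta$. Indeed, we pick $x\in S_X$ with $|Q(x)|>1-\eta$, approximate it by a finitely supported vector, and normalize. Set $F=\supp x_0$.

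\emph{Step 2: the neighborhood.} Let $U$ consist of all $P$ satisfying
\[
|P(x_0)-Q(x_0)|<\eta_0/2
\qquad\text{and}\qquad
|P(e_j+ e_k)-Q(e_j+e_k)|,\ |P(e_j+ ie_k)-Q(e_j+ie_k)|,\ |P(e_j)-Q(e_j)|<\gamma
\]
for all $j,k\in F$. Here $\gamma>0$ is small, to be fixed. By Lemma~\ref{lem:wstar} (or directly, since these are evaluations at elements of the predual), $U$ is a weak-star open set. Take any $P\in U\cap B_{\mathcal{P}(^2X;\C)}$. Then
\[
|P(x_0)|>1-\eta-\eta_0/2=1-\eta_0,
\]
so Theorem~\ref{lemma-p-convexity} gives $\|P-P\circ\pi_F\|<\eps/4$. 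The same holds for $Q$.

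\emph{Step 3: finite-dimensional comparison.} The polynomials $P\circ\pi_F$ and $Q\circ\pi_F$ are determined by the coefficients $\check P(e_j,e_k)$ for $j,k\in F$. By polarization, these coefficients are linear combinations of the values $P(e_j)$, $P(e_j+e_k)$ and $P(e_j+ie_k)$. Hence the coefficients of $P$ and $Q$ differ by at most $C\gamma$ for an absolute constant $C$. For $z\in B_X$ we have $|z(j)|\le\|e_j\|^{-1}$ by the lattice property, and $F$ is finite. Therefore
\[
\|P\circ\pi_F-Q\circ\pi_F\|\le C\gamma\sum_{j,k\in F}\|e_j\|^{-1}\|e_k\|^{-1}.
\]
Choosing $\gamma$ small, depending on $F$ (which is allowed, since $U$ may depend on $Q$), makes this less than $\eps/4$. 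Combining the three bounds gives $\|P-Q\|<3\eps/4$. Hence any two elements of $U\cap B$ are within $3\eps/2$ of each other. Replacing $\eps$ by $\eps/2$ at the start yields diameter $<\eps$.

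\emph{Main obstacle.} The only real difficulty is already handled by Theorem~\ref{lemma-p-convexity}: we need the concentration bound uniformly over all polynomials in the unit ball that almost norm at $x_0$. This uniformity is what lets the \emph{same} finite set $F$ serve every $P$ in the neighborhood. The remaining care is bookkeeping: the quantity $\eta$ must depend only on $\eps$, which it does since it comes from $\eta_0$, while $\gamma$ may depend on $Q$ and $F$.
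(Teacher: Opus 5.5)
Your proof is correct and follows essentially the same route as the paper: you apply the concentration theorem to every $P$ in a weak-star neighborhood defined by evaluation at a finitely supported almost-norming $x_0$, and then compare $P\circ\pi_F$ with $Q\circ\pi_F$ in finite dimensions. The only difference is that you carry out this last step explicitly, via polarization and the coordinate bound $|z(j)|\le\|e_j\|^{-1}$, whereas the paper simply notes that $R\mapsto R\circ\pi_F$ is weak-star continuous with finite-dimensional range.
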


\begin{proof} Fix $\eps > 0$. Let us apply \hyperref[lemma-p-convexity]{Lemma \ref*{lemma-p-convexity}} with $\eps/4$ and let $\eta > 0$ be the resulting constant we get from that lemma. Let $Q \in B_{\mathcal{P}(^2 X; \C)}$ satisfy $\|Q\| > 1 - \frac{\eta}{4}$. Since finitely supported vectors are dense in $X$, there exists a finitely supported vector $x_0 \in S_X$ such that $|Q(x_0)| > 1 - \frac{\eta}{2}$. Let $F:= \supp x_0$. Consider the $w^*$-neighborhood of $Q$ given by 
\begin{equation*}
    U_0:= \left\{ R \in \mathcal{P}(^2X; \C): |R(x_0) - Q(x_0)| < \frac{\eta}{2} \right\}.
\end{equation*}
If $R \in U_0 \cap B_{\mathcal{P}(^2 X; \C)}$, then $|R(x_0)| > 1 -\eta$ and \hyperref[lemma-p-convexity]{Lemma \ref*{lemma-p-convexity}} yields $\|R - R \circ \pi_F\| < \eps/4$. The map $R \mapsto R \circ \pi_F$ is $w^*$-continuous and has finite-dimensional range. Then, there exists a $w^*$-neighborhood $U_1$ of $Q$ such that $\|R \circ \pi_F - Q \circ \pi_F\| < \eps/4$ for every $R \in U_1$. Put $U:= U_0 \cap U_1$. If $R, S \in U \cap B_{\mathcal{P}(^2 X; \C)}$, then 
\begin{align*}
\|R - S\| \leq \|R - R \circ \pi_F\| &+ \|R \circ \pi_F - Q \circ \pi_F\| \\
+ \|Q \circ \pi_F - S \circ \pi_F\| + \|S \circ \pi_F - S\| \\
< \eps.
\end{align*}
Therefore, $\diam(U \cap B_{\mathcal{P}(^2 X; \C)}) < \eps$, which proves that $\mathcal{P}(^2 X; \C)$ has the $w^*$-UKK.    
\end{proof}

The argument for bilinear forms is completely analogous. One combines the concentration result for bilinear forms with the finite-dimensional approximation argument used in the proof of \hyperref[theorem-p-convexity]{Theorem \ref*{theorem-p-convexity}}. We therefore omit the details.

\begin{theorem}\label{theorem-p-convexity bilineal} Let $X, Y$ be a complex Banach sequence lattice such that $c_{00}$ is dense in both spaces. If $X$ and $Y$ are $p$-convex with constant one for some $p>2$, then $\mathcal{L}(^2X\times Y; \C)$ (and consequently $\mathcal{L}_s(^2 X; \C)$) has the $w^*$-UKK.
\end{theorem}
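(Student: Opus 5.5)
We follow the scheme of the proof of Theorem~\ref{theorem-p-convexity}, with Theorem~\ref{thm: conc-bilin} in place of Theorem~\ref{lemma-p-convexity}, and verify the $w^*$-UKK through the characterization in Proposition~\ref{obs:wstar-ukk}.

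Fix $\eps>0$. Let $\eta>0$ be the constant given by Theorem~\ref{thm: conc-bilin} for $\eps/4$. Take $A\in B_{\mathcal{L}(^2X\times Y;\C)}$ with $\|A\|>1-\eta/4$.

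\emph{Step 1: a finitely supported near-norming pair.} Since $c_{00}$ is dense in both $X$ and $Y$, and $A$ is jointly continuous, we can choose finitely supported $x_0\in S_X$ and $y_0\in S_Y$ with $|A(x_0,y_0)|>1-\eta/2$. (Normalize after approximating, using that $\|A\|>1-\eta/4$.) Put $F=\supp x_0$ and $G=\supp y_0$.

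\emph{Step 2: a small weak-star neighborhood.} Let
\[
U_0=\{B: |B(x_0,y_0)-A(x_0,y_0)|<\eta/2\}.
\]
This is a $w^*$-neighborhood of $A$, since evaluation at $x_0\otimes y_0$ is $w^*$-continuous. Every $B\in U_0\cap B_{\mathcal{L}(^2X\times Y;\C)}$ satisfies $|B(x_0,y_0)|>1-\eta$, so Theorem~\ref{thm: conc-bilin} gives $\|B-R_{F,G}\circ B\|<\eps/4$.

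The map $B\mapsto R_{F,G}\circ B$ has finite-dimensional range. Indeed, $R_{F,G}\circ B$ is determined by the finitely many values $B(e_i,e_j)$ with $i\in F$ and $j\in G$, and each of these is $w^*$-continuous. Hence there is a $w^*$-neighborhood $U_1$ of $A$ such that $\|R_{F,G}\circ B-R_{F,G}\circ A\|<\eps/4$ for all $B\in U_1$. Here we use that $\|R_{F,G}\circ C\|\le\sum_{i\in F,\,j\in G}|C(e_i,e_j)|\,\|e_i^*\|\,\|e_j^*\|$, and that all norms on a finite-dimensional space are equivalent.

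\emph{Step 3: the diameter estimate.} Set $U=U_0\cap U_1$. For $B,C\in U\cap B_{\mathcal{L}(^2X\times Y;\C)}$, the four-term triangle inequality from Theorem~\ref{theorem-p-convexity} gives $\|B-C\|<\eps$. Thus $\diam(U\cap B)\le\eps$, and Proposition~\ref{obs:wstar-ukk} yields the $w^*$-UKK of $\mathcal{L}(^2X\times Y;\C)$.

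For $\mathcal{L}_s(^2X;\C)$, take $Y=X$ and apply Fact~\ref{prop:multi implies simetric}. That fact is stated for the $w^*$-UKK as well. The point is that restricting to symmetric forms preserves the weak-star topology and the norm, so the neighborhoods $U\cap\mathcal{L}_s$ work directly.

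The only mildly delicate point is Step 1: getting a finitely supported pair on which $|A|$ is close to $\|A\|$ and which still lies on the unit spheres. This follows from density of $c_{00}$ together with the continuity and bilinearity of $A$. Everything else is a routine transcription of the polynomial case.
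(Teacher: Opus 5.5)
Your proof is correct and is the argument the paper intends. The paper omits the details, saying only that one combines Theorem~\ref{thm: conc-bilin} with the finite-dimensional approximation from the proof of Theorem~\ref{theorem-p-convexity} and obtains the symmetric case from Fact~\ref{prop:multi implies simetric}. The one cosmetic point is that your estimate gives $\diam(U\cap B_{\mathcal{L}(^2X\times Y;\C)})\le\eps$ rather than $<\eps$; running the argument with $\eps/2$ in place of $\eps$ fixes this.
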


The previous theorems apply to a large class of Banach sequence lattices arising from weighted $\ell_p$-type norms. We now introduce a common framework that simultaneously includes the classical sequence spaces $\ell_p$, Lorentz sequence spaces, Garling sequence spaces, and also $c_0$.

Let $p>2$ and let $\mathcal{A}$ be a family of non-negative scalar sequences. Assume that 
\begin{equation} \label{family-norm}
    \|x\|_{\mathcal{A},p}:= \sup_{a \in \mathcal{A}} \left( \sum_{k=1}^{\infty} a(k) |x(k)|^p \right)^{1/p}
\end{equation}
defines a norm on $c_{00}$ and also that its completion has the canonical unit vectors as a Schauder basis. We write $X_{\mathcal{A},p} = \overline{(c_{00}, \|\cdot\|_{\mathcal{A},p})}$.

\begin{proposition} \label{proposition-p-convex} The Banach space $X_{\mathcal{A},p}$ endowed with the norm (\ref{family-norm}) is $p$-convex with constant one. 
\end{proposition}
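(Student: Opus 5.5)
The plan is to verify the $p$-convexity inequality of Definition~\ref{def:r-convex} directly on the dense subspace $c_{00}$, and then pass to the completion. Fix $x^1,\ldots,x^m \in c_{00}$ and set $z = \bigl(\sum_{j=1}^m |x^j|^p\bigr)^{1/p}$, computed coordinatewise. Then $z \in c_{00}$ and $|z(k)|^p = \sum_j |x^j(k)|^p$ for every $k$. For any fixed $a \in \mathcal{A}$, interchanging the finite sums gives
\begin{equation*}
\sum_{k=1}^\infty a(k)|z(k)|^p = \sum_{j=1}^m \sum_{k=1}^\infty a(k)|x^j(k)|^p \leq \sum_{j=1}^m \|x^j\|_{\mathcal{A},p}^p .
\end{equation*}
The inequality holds because each inner sum is bounded by the supremum over $\mathcal{A}$. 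Taking the supremum over $a \in \mathcal{A}$ on the left yields $\|z\|_{\mathcal{A},p} \leq \bigl(\sum_j \|x^j\|_{\mathcal{A},p}^p\bigr)^{1/p}$, which is exactly the required inequality on $c_{00}$. The key point is simply that the supremum of a sum is at most the sum of suprema.

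Before this, I need to check that $X_{\mathcal{A},p}$ is really a Banach sequence lattice in the sense of Definition~\ref{complex-Banach-sequence-lattice}, since the completion is a priori abstract. Because the unit vectors form a Schauder basis, each element $x$ of the completion is identified with its coefficient sequence $(x(k))_k$. This identification is injective, so the completion embeds in $\K^{\N}$. Moreover, the partial sums $\pi_{[1,n]}x$ converge to $x$, and the norm can be written as $\|x\| = \lim_n \|\pi_{[1,n]}x\|_{\mathcal{A},p}$. On $c_{00}$ the norm is visibly monotone in $|x(k)|$ because $a(k)\geq 0$.

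The lattice property of the completion is the step I expect to need the most care. Take $|y(k)|\leq |x(k)|$ with $x$ in the space; I must show that $\sum y(k)e_k$ converges. I would argue this through the fact that the supremum formula extends to the completion. By lower semicontinuity of the pointwise formula, $\|x\| = \sup_a (\sum_k a(k)|x(k)|^p)^{1/p}$ for every $x$ in the completion, since it holds on partial sums and the basis gives convergence. The tail estimate $\|\sum_{k=n}^{N} y(k)e_k\| \leq \|\sum_{k=n}^N x(k)e_k\|$ then holds by monotonicity on $c_{00}$. So the partial sums of $y$ are Cauchy, $y$ lies in the space, and $\|y\|\leq\|x\|$.

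Finally, for general $x^1,\ldots,x^m$ in $X_{\mathcal{A},p}$, I apply the $c_{00}$ inequality to the truncations $\pi_{[1,n]}x^j$. Since $\pi_{[1,n]}z = \bigl(\sum_j|\pi_{[1,n]}x^j|^p\bigr)^{1/p}$, the left side converges to $\|z\|$ and the right side to $\bigl(\sum_j\|x^j\|^p\bigr)^{1/p}$. This also shows that $z$ belongs to the space, by the same Cauchy argument used for the lattice property.
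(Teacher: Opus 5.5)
Your proof is correct, and its core is exactly the paper's argument: for each $a\in\mathcal{A}$ you interchange the finite sums and bound each term by its supremum, i.e.\ the supremum of a sum is at most the sum of the suprema. The additional steps are correct and fill in points that the paper's one-line computation on $X_{\mathcal{A},p}$ takes for granted: you identify the completion with coefficient sequences, check the lattice property, and extend the supremum formula from $c_{00}$ using monotonicity of the partial sums.
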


\begin{proof} For a finite family $x^1, \ldots, x^m$, we have 
\begin{eqnarray*}
    \left\| \left( \sum_{j=1}^m |x^j|^p \right)^{1/p} \right\|_{\mathcal{A},p}^p &=& \sup_{a \in \mathcal{A}} \sum_{k=1}^{\infty} a(k) \sum_{j=1}^m |x^j(k)|^p \\
    &\leq& \sum_{j=1}^m \sup_{a \in \mathcal{A}} \sum_{k=1}^{\infty} a(k) |x^j(k)|^p 
    = \sum_{j=1}^m \| x^j\|_{\mathcal{A},p}^p.
\end{eqnarray*}
    
\end{proof}

The proposition immediately yields several concrete examples. Indeed, each of the classical sequence spaces listed below can be realized as an example of $X_{\mathcal A,p}$ for a suitable family of weights.

\begin{corollary} The Banach spaces $\mathcal{P}(^2X;\C)$, $\mathcal{L}(^2X\times Y; \C)$ and $\mathcal{L}_s(^2 X; \C)$ have the $w^*$-UKK if $X$ and $Y$ are any (but not necessarily the same) of the following spaces.
\begin{itemize}
\item[(a)] $c_0$.
\item[(b)] $ \ell_p$ with $2 < p < \infty$.
\item[(c)] $d(w,p)$ for every Lorentz weight sequence $w$ and $2 < p < \infty$.
\item[(d)] $ g(w,p)$ for every Garling weight sequence $w$ and $2 < p < \infty$.
\end{itemize}
\end{corollary}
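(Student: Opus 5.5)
The plan is to derive the corollary from Theorem~\ref{theorem-p-convexity} and Theorem~\ref{theorem-p-convexity bilineal}. For each listed space we must check three hypotheses: it is a complex Banach sequence lattice, $c_{00}$ is dense in it, and it is $p$-convex with constant one for some $p>2$. Proposition~\ref{proposition-p-convex} handles $p$-convexity at once, provided each space can be written as $X_{\mathcal A,p}$. The lattice property and density of $c_{00}$ follow from the form of the norm (\ref{family-norm}). Indeed, $|y|\le|x|$ coordinatewise gives $\|y\|_{\mathcal A,p}\le\|x\|_{\mathcal A,p}$, and the unit vectors form a Schauder basis by assumption.

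\emph{Realizing each space as $X_{\mathcal A,p}$.}
\begin{itemize}
\item[(b)] For $\ell_p$, take $\mathcal A=\{(1,1,1,\dots)\}$.
\item[(c)] For $d(w,p)$, take $\mathcal A=\{w\circ\pi^{-1}:\pi\in\Sigma_{\N}\}$. Then $\sum_k w(\pi^{-1}(k))|x(k)|^p=\sum_j w(j)|x(\pi(j))|^p$, which reproduces the sup-over-permutations formula.
\item[(d)] For $g(w,p)$, take $\mathcal A$ to be the family of sequences $a_\phi$ with $a_\phi(\phi(j))=w(j)$ and $a_\phi=0$ off the range of $\phi$, where $\phi$ ranges over increasing maps.
\end{itemize}
In (c) and (d), density of $c_{00}$ follows from the standard fact that the unit vectors form a Schauder basis of these spaces, as recalled in the preliminaries. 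For $d(w,p)$ with $w\to0$, $\sum w=\infty$, one notes that the tail $(\id-\pi_{[1,n]})x$ has rearrangement norm tending to $0$ by dominated convergence on $\sum w(k)x^\star(k)^p$.

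\emph{The case $c_0$.} Case (a) is the only one not directly of the form $X_{\mathcal A,p}$ with $p<\infty$, so I expect it to need a separate check, though a short one. I would verify $r$-convexity with constant one directly for every finite $r$. For $x^1,\dots,x^m\in c_0$,
\[
\Bigl\|\Bigl(\sum_j|x^j|^r\Bigr)^{1/r}\Bigr\|_\infty=\sup_k\Bigl(\sum_j|x^j(k)|^r\Bigr)^{1/r}\le\Bigl(\sum_j\|x^j\|_\infty^r\Bigr)^{1/r}.
\]
Alternatively, $c_0=X_{\mathcal A,p}$ with $\mathcal A=\{e_k:k\in\N\}$, since $\sup_k|x(k)|^p=\|x\|_\infty^p$. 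So any $p>2$ works, and $c_{00}$ is dense in $c_0$.

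\emph{Mixed choices of $X$ and $Y$.} When $X$ and $Y$ come from different items, Theorem~\ref{theorem-p-convexity bilineal} needs a common exponent $p>2$. I would pick $p=\min(p_X,p_Y)>2$, using that $p'$-convexity with constant one implies $p$-convexity with constant one for $p\le p'$ (the LT2 fact quoted earlier). Then Theorem~\ref{theorem-p-convexity} gives the claim for $\mathcal P(^2X;\C)$. Theorem~\ref{theorem-p-convexity bilineal} gives it for $\mathcal L(^2X\times Y;\C)$, and hence for $\mathcal L_s(^2X;\C)$ by Fact~\ref{prop:multi implies simetric}, or directly from the parenthetical clause of Theorem~\ref{theorem-p-convexity bilineal}.

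The main obstacle is only bookkeeping: confirming that the families $\mathcal A$ above give exactly the stated norms and that the unit vectors are a Schauder basis. For Garling spaces this is quoted from the literature (\cite{AAW}).
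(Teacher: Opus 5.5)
Your proposal is correct and follows the paper's proof. Like the paper, you realize $c_0$, $\ell_p$, $d(w,p)$ and $g(w,p)$ as $X_{\mathcal A,p}$ using the same weight families (single-coordinate weights, the constant weight, permutations of $w$, and $w$ placed along increasing subsequences), and then invoke Proposition~\ref{proposition-p-convex} together with Theorems~\ref{theorem-p-convexity} and~\ref{theorem-p-convexity bilineal}. Your extra bookkeeping fills in details the paper leaves implicit: density of $c_{00}$ in $d(w,p)$ via dominated convergence, and the choice of a common exponent $\min(p_X,p_Y)>2$ for mixed pairs.
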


\begin{proof} For all the items, we apply \hyperref[proposition-p-convex]{Proposition \ref*{proposition-p-convex}} and then \hyperref[theorem-p-convexity]{Theorem \ref*{theorem-p-convexity}} or \ref{theorem-p-convexity bilineal}. For $c_0$, we take the family of weights supported on one coordinate. Let $2 < p < \infty$. For $\ell_p$, we take the constant weight one. For $d(w,p)$ we take all permutations of the weight $w$. Finally, for $g(w,p)$ we take the weights obtained by placing $w(1), w(2), \ldots$ on increasing subsequences.
\end{proof}

The class of examples can be enlarged further by considering direct sums of $p$-convex Banach sequence lattices. The next result shows that the $w^*$-UKK property is preserved under the classical $c_0$- and $\ell_s$-sum constructions.

\begin{corollary}\label{cor:direct-sums}
Let $p>2$ and let $(X_j)_{j\in\N}$ be a sequence of complex Banach sequence lattices. Suppose that, for every $j\in\N$, $c_{00}$ is dense in  $X_j$ and $X_j$ is $p$-convex with constant one. Then for $X=\left(\bigoplus_{j\in\N}X_j\right)_{c_0}$ or $\left(\bigoplus_{j\in\N}X_j\right)_{\ell_s}$ the Banach spaces
\[
\mathcal{P}\!\left(^2X;\C\right),\ 
\mathcal{L}\!\left(^2X;\C\right),\ 
\mathcal{L}_s\!\left(^2X;\C\right)
\]
have the $w^*$-UKK for every $p\le s<\infty$.
\end{corollary}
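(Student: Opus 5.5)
The plan is to reduce the statement to Theorem~\ref{theorem-p-convexity} and Theorem~\ref{theorem-p-convexity bilineal} by realizing $X$ itself as a complex Banach sequence lattice on $\N$ that is $p$-convex with constant one and in which $c_{00}$ is dense. Once this is done, the conclusion for $\mathcal{P}(^2X;\C)$ and $\mathcal{L}(^2X;\C)$ follows from those theorems, and the conclusion for $\mathcal{L}_s(^2X;\C)$ follows from Theorem~\ref{theorem-p-convexity bilineal} together with Fact~\ref{prop:multi implies simetric}.

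First, fix a bijection between $\N$ and the disjoint union $\bigsqcup_j \N$ that indexes the coordinates of the $X_j$. Through it, each $x=(x_j)_j$ in the direct sum becomes a scalar sequence, and the norm becomes
\[
\|x\|=\sup_j\|x_j\|_{X_j} \quad\text{or}\quad \|x\|=\Bigl(\sum_j \|x_j\|_{X_j}^s\Bigr)^{1/s}.
\]
Both norms are complete. Each is a lattice norm: if $|y(k)|\le|x(k)|$ coordinatewise, then $\|y_j\|_{X_j}\le\|x_j\|_{X_j}$ for every $j$ by the lattice property of $X_j$, and the outer $c_0$ or $\ell_s$ norm is monotone. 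Finitely supported sequences are dense. Indeed, truncating to finitely many blocks gives an arbitrarily small error in both the $c_0$-sum and the $\ell_s$-sum with $s<\infty$. Within each of those finitely many blocks, $c_{00}$ is dense in $X_j$ by hypothesis.

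The main step is $p$-convexity with constant one. Take $x^1,\dots,x^m\in X$ and put $u=(\sum_i|x^i|^p)^{1/p}$. Its $j$-th block is $u_j=(\sum_i|x^i_j|^p)^{1/p}$, so $p$-convexity of $X_j$ gives
\[
\|u_j\|_{X_j}\le\Bigl(\sum_i\|x^i_j\|_{X_j}^p\Bigr)^{1/p}.
\]
In the $c_0$ case, the supremum over $j$ of the right-hand side is at most $(\sum_i\sup_j\|x^i_j\|^p)^{1/p}=(\sum_i\|x^i\|^p)^{1/p}$. In the $\ell_s$ case, let $a_{ij}=\|x^i_j\|_{X_j}^p$. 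We need
\[
\Bigl(\sum_j\Bigl(\sum_i a_{ij}\Bigr)^{s/p}\Bigr)^{p/s}\le\sum_i\Bigl(\sum_j a_{ij}^{s/p}\Bigr)^{p/s}.
\]
This is Minkowski's inequality in $\ell_{s/p}$, which is valid because $s/p\ge1$. This is exactly where the hypothesis $p\le s$ enters, and it is the only nontrivial point. Taking $p$-th roots gives $\|u\|\le(\sum_i\|x^i\|^p)^{1/p}$.

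With these properties established, Theorem~\ref{theorem-p-convexity} yields the $w^*$-UKK of $\mathcal{P}(^2X;\C)$. Theorem~\ref{theorem-p-convexity bilineal}, applied with $Y=X$, yields it for $\mathcal{L}(^2X;\C)$, and Fact~\ref{prop:multi implies simetric} passes it to $\mathcal{L}_s(^2X;\C)$. No real obstacle is expected beyond checking the Minkowski step carefully.
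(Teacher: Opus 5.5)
Your proposal is correct and follows the paper's route: show that the $c_0$- or $\ell_s$-sum, re-indexed over $\N$, is a complex Banach sequence lattice with dense $c_{00}$ that is $p$-convex with constant one, and then invoke Theorems~\ref{theorem-p-convexity} and~\ref{theorem-p-convexity bilineal}, with the symmetric case coming via Fact~\ref{prop:multi implies simetric}. The paper writes out only the $c_0$ computation, which matches yours, and dismisses the $\ell_s$ case as ``similar''; your Minkowski step in $\ell_{s/p}$ supplies that case and correctly locates where $p\le s$ is needed, and you also check the lattice and density properties that the paper leaves implicit.
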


\begin{proof} For vectors $x^1, \ldots, x^N \in X$, let us write $x^k = (x_j^k)_{j=1}^{\infty}$ with $x_j^k \in X_j$ for every $j$. We have that each $X_j$ satisfies
\begin{equation*}
\left\| \left( \sum_{k=1}^N |x_j^k|^p \right)^{1/p} \right\|_{X_j} \leq \left( \sum_{k=1}^N \|x_j^k\|_{X_j}^p \right)^{1/p}.
\end{equation*}
The $c_0$-sum is $p$-convex with constant one since
\begin{eqnarray*}
\left\| \left( \sum_{k=1}^N |x^k|^p \right)^{1/p} \right\|_{c_0} &=& \sup_{j \in \N} \left\| \left( \sum_{k=1}^N |x_j^k|^p \right)^{1/p} \right\|_{X_j} \\
&\leq& \sup_{j \in \N} \left( \sum_{k=1}^N \|x_j^k\|_{X_j}^p \right)^{1/p} \\
&\leq& \left( \sum_{k=1}^N \sup_{j \in \N} \|x_j^k\|_{X_j}^p \right)^{1/p} 
= \left( \sum_{k=1}^N \|x^k\|_{c_0}^p \right)^{1/p}.
\end{eqnarray*}
The argument for the $\ell_s$-sum is similar.

\end{proof}

For simplicity in the notation, in the previous corollary we considered bilinear forms defined on the same space. However, the same argument clearly extends to bilinear forms on products of (possibly different) spaces satisfying the corresponding hypotheses.

\subsection{Applications in the real case}

The following result should be compared to \cite[Theorem 3.4]{RZSSD}. This is an application of \hyperref[real-fact-1]{Proposition~\ref*{real-fact-1}}.

\begin{proposition} \label{real-theorem-1} Let $X$ be a real or complex Banach sequence lattice such that $c_{00}$ is dense in $X$. Suppose that $X$ is $p$-convex with constant one. Let $Z$ be finite-dimensional over $\K$ and assume that $Z$ has modulus of convexity of power type $2 \leq q < p$. Then, $\mathcal{L}(X,Z) = (X \pten Z^*)^*$ has the $w^*$-UKK.
\end{proposition}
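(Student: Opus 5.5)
We will follow the proof of Theorem~\ref{theorem-p-convexity} almost verbatim, with Proposition~\ref{real-fact-1} as the concentration tool instead of Theorem~\ref{lemma-p-convexity}. We use the local characterization of Proposition~\ref{obs:wstar-ukk}. Given $\eps>0$, apply Proposition~\ref{real-fact-1} with $\eps/4$ to operators $X\to Z$ and obtain $\eta>0$. Let $T\in B_{\mathcal{L}(X,Z)}$ with $\|T\|>1-\eta/4$.

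By density of $c_{00}$ in $X$ and continuity of $T$, there is a finitely supported $x_0\in S_X$ with $\|Tx_0\|>1-\eta/2$. Put $F=\supp x_0$. Since $Z$ is finite-dimensional, the map $S\mapsto Sx_0$ from $\mathcal{L}(X,Z)$ to $Z$ is $w^*$-continuous. Indeed, fix a basis $z_1^*,\dots,z_d^*$ of $Z^*$. Then each coordinate $S\mapsto z_i^*(Sx_0)=\langle S,x_0\otimes z_i^*\rangle$ is evaluation at an element of the predual $X\pten Z^*$, and all norms on $Z$ are equivalent. Hence
\[
U_0=\{S:\ \|Sx_0-Tx_0\|<\eta/2\}
\]
is a $w^*$-neighborhood of $T$. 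For $S\in U_0\cap B_{\mathcal{L}(X,Z)}$ we get $\|Sx_0\|>1-\eta$, and Proposition~\ref{real-fact-1} yields $\|S(\id-\pi_F)\|<\eps/4$.

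Next consider the restriction map $S\mapsto S\circ\pi_F$. Its range is contained in $\{R:\ R=R\pi_F\}\cong\mathcal{L}(\operatorname{span}\{e_k:k\in F\},Z)$, which is finite-dimensional because $F$ is finite and $\dim Z<\infty$. This map is $w^*$-to-$w^*$ continuous, since $S\pi_F$ evaluated at $x\otimes z^*$ equals $S$ evaluated at $\pi_Fx\otimes z^*$. On a finite-dimensional range the $w^*$ topology is the norm topology. So there is a $w^*$-neighborhood $U_1$ of $T$ with $\|S\pi_F-T\pi_F\|<\eps/4$ for all $S\in U_1$. Concretely, $U_1$ is defined by finitely many conditions $|z_i^*(Se_k)-z_i^*(Te_k)|<\delta$ for $k\in F$, $i\le d$, with $\delta$ small.

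Set $U=U_0\cap U_1$. For $R,S\in U\cap B_{\mathcal{L}(X,Z)}$, the four-term triangle inequality
\[
\|R-S\|\le\|R(\id-\pi_F)\|+\|R\pi_F-T\pi_F\|+\|T\pi_F-S\pi_F\|+\|S(\id-\pi_F)\|
\]
gives $\|R-S\|<\eps$. Thus $\diam(U\cap B)<\eps$ whenever $\|T\|>1-\eta/4$, which is the $w^*$-UKK.

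The real and complex cases are treated simultaneously, since Proposition~\ref{real-fact-1} is scalar-field independent. The only genuine obstacle is the identification of the $w^*$ topology with convergence of the values $Sx_0$ and $Se_k$ in norm. This is exactly where the finite-dimensionality of $Z$ is used. For infinite-dimensional $Z$ these evaluations are only weakly continuous, and $U_0$ would not force $\|Sx_0\|$ to be large.
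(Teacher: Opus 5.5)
Your argument is correct and is the proof the paper has in mind: the neighborhood argument of Theorem~\ref{theorem-p-convexity}, with Proposition~\ref{real-fact-1} supplying the concentration step. One small correction to your closing remark: finite-dimensionality of $Z$ is needed only in the restriction step, because the range $\mathcal{L}(\operatorname{span}\{e_k:k\in F\},Z)$ of $S\mapsto S\pi_F$ must be finite-dimensional. For the concentration step, a $w^*$-neighborhood of the form $\{S:\ \re z^*(Sx_0)>\|Tx_0\|-\eta/2\}$, with $z^*\in S_{Z^*}$ norming $Tx_0$, already forces $\|Sx_0\|>1-\eta$ for every $Z$.
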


The following result should be compared to \cite[Theorem 3.7]{RZSSD}, This is an application of \hyperref[real-fact-2]{Proposition~\ref*{real-fact-2}}.

\begin{proposition}\label{real-theorem-2} Let $Z$ be a finite-dimensional uniformly convex space. Then, the Banach space $\mathcal{L}(c_0, Z) = (c_0 \pten Z^*)^*$ has the $w^*$-UKK.
\end{proposition}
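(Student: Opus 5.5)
We prove Proposition~\ref{real-theorem-2} by the same neighborhood argument as Theorem~\ref{theorem-p-convexity}, with Proposition~\ref{real-fact-2} as the concentration input. The criterion to verify is the one in Proposition~\ref{obs:wstar-ukk}. Fix $\eps>0$ and let $\eta>0$ be the constant given by Proposition~\ref{real-fact-2} for $\eps/4$ and the uniformly convex space $Z$. Take $T\in B_{\mathcal{L}(c_0,Z)}$ with $\|T\|>1-\eta/4$.

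\emph{Step 1: choose a finitely supported near-norming vector.} Since $c_{00}$ is dense in $c_0$ and $T$ is continuous, there is $x_0\in S_{c_0}$ with finite support $F$ and $\|Tx_0\|>1-\eta/2$. By Hahn--Banach, pick $z^*\in S_{Z^*}$ with $z^*(Tx_0)=\|Tx_0\|$.

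\emph{Step 2: build the weak-star neighborhood.} Set
\[
U_0=\{S\in\mathcal{L}(c_0,Z): |z^*(Sx_0)-z^*(Tx_0)|<\eta/2\}.
\]
This set is $w^*$-open, because $x_0\otimes z^*\in c_0\pten Z^*$. For $S\in U_0\cap B_{\mathcal{L}(c_0,Z)}$ we get $\|Sx_0\|\geq |z^*(Sx_0)|>1-\eta$, so Proposition~\ref{real-fact-2} gives $\|S(\id-\pi_F)\|<\eps/4$.

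Next consider the map $S\mapsto S\pi_F$. Its range is contained in $\mathcal{L}(\mathrm{span}\{e_k:k\in F\},Z)$, which is finite-dimensional because $F$ is finite and $Z$ is finite-dimensional. The map is $w^*$-to-norm continuous. Indeed, $S\pi_F$ is determined by the finitely many vectors $Se_k$, $k\in F$. Fixing a basis $z_1^*,\dots,z_d^*$ of $Z^*$, each coordinate $z_i^*(Se_k)$ is obtained by pairing $S$ with the predual element $e_k\otimes z_i^*$. Finite dimensionality makes all norms on the range equivalent, so there is a $w^*$-neighborhood $U_1$ of $T$ with $\|S\pi_F-T\pi_F\|<\eps/4$ for all $S\in U_1$.

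\emph{Step 3: bound the diameter.} Let $U=U_0\cap U_1$. For $R,S\in U\cap B$, split $R-S$ as
\[
R(\id-\pi_F)+(R\pi_F-T\pi_F)+(T\pi_F-S\pi_F)-S(\id-\pi_F).
\]
Each of the four terms has norm less than $\eps/4$, so $\diam(U\cap B)<\eps$.

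This works for every such $T$ with the single $\eta$ fixed above (the criterion uses $\eta/4$ in place of $\eta$), so Proposition~\ref{obs:wstar-ukk} yields the $w^*$-UKK. The only point needing care is the $w^*$-continuity of $S\mapsto S\pi_F$ into a norm-topologised finite-dimensional space, and it is settled exactly by the finite-dimensionality of $Z$.
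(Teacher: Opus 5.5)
Your proof is correct and is essentially the argument the paper has in mind. The paper omits the proof and presents the statement as an application of Proposition~\ref{real-fact-2} through the same weak-star neighborhood scheme as in Theorem~\ref{theorem-p-convexity}; you carry out that scheme and correctly handle the two vector-valued adaptations, namely pairing with $x_0\otimes z^*$ for a norming $z^*\in S_{Z^*}$ in place of the scalar evaluation, and using $\dim Z<\infty$ to make $S\mapsto S\pi_F$ weak-star-to-norm continuous.
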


It is worth noting that the finite-dimensionality in \hyperref[real-theorem-1]{Proposition~\ref*{real-theorem-1}} and \hyperref[real-theorem-2]{Proposition~\ref*{real-theorem-2}} is essential. Indeed, this is shown in the next example. This should be compared to \cite[Proposition 2]{DK}.

\begin{example} Let $X = c_0(\K)$ and $Z = \ell_2(\K)$. The space $X$ is $p$-convex with constant one for any finite $p$ and $Z$ has power type 2. However, $\mathcal{L}(c_0, \ell_2)$ fails the sequential weak-star Kadec-Klee property. Indeed, for every $n \geq 3$, let us define $T_n: c_0 \rightarrow \ell_2$ and $T: c_0 \rightarrow \ell_2$ by 
\begin{equation*}
    T_n(x) := \frac{1}{2} \left( (x_1 + x_2) e_1 + (x_1 - x_2)e_n \right) 
\end{equation*}
and 
$T(x):= \frac{1}{2}(x_1 + x_2) e_1$ for every $x = (x_k)_{k=1}^{\infty} \in c_0$. For every $x \in B_{c_0}$,  using the parallelogram identity, we obtain that $\|T_n\| \leq 1$ and since $\|T_n x\| = 1$ whenever $x_1=x_2=1$, we have that $\|T_n\| = 1$ for every $n \geq 3$. Also, $\|T\| = \sup_{x \in B_{c_0}} \frac{|x_1+x_2|}{2} = 1$. Since $\mathcal{L}(c_0, \ell_2) = (c_0 \pten \ell_2)^*$, we have that, for every $x \in c_0$ and $z \in \ell_2$, 
\begin{equation*}
    \langle T_n x, z \rangle = \frac{1}{2} (x_1 + x_2)\langle e_1, z \rangle + \frac{1}{2} (x_1 - x_2) \langle e_n, z \rangle 
\end{equation*}
for every $n \geq 3$. Since $\langle e_n, z \rangle \rightarrow 0$ as $n \rightarrow \infty$, we have that $\langle T_n x, z \rangle \rightarrow \langle Tx, z\rangle$. Therefore, $T_n \xrightarrow{w^*} T$. However, 
\begin{equation*}
    \|T_n - T\| =\sup_{x \in B_{c_0}} \frac{|x_1 - x_2|}{2} = 1.
\end{equation*}
\end{example}

\hyperref[real-fact-3]{Theorem \ref*{real-fact-3}} immediately yields the following result.

\begin{theorem} \label{real-theorem-3} Let $X$ and $Y$ be Banach sequence lattices. Suppose the following.
\begin{itemize}
\item[1.] $X$ is $p$-convex with constant one.
\item[2.] $Y$ is $s$-convex with constant one.
\item[3.] $Y^*$ has modulus of convexity of power type $2 \leq q < p$.
\item[4.] $X^*$ has modulus of convexity of power type $2 \leq r < s$.
\end{itemize}
Then, $\mathcal{L}(^2 X \times Y; \K) = (X \pten Y)^*$ has the $w^*$-UKK.
\end{theorem}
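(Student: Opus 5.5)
The plan is to run the argument of Theorem~\ref{theorem-p-convexity} verbatim, with Theorem~\ref{real-fact-3} replacing Theorem~\ref{lemma-p-convexity} as the concentration input, and to verify the $w^*$-UKK through the neighborhood characterization of Proposition~\ref{obs:wstar-ukk}. One point must be settled first. As in the complex theorems, we need finitely supported vectors to be dense in $X$ and $Y$, i.e.\ $c_{00}$ dense. I expect this either to be implicit in the statement or to follow from the hypotheses. Assumption 3 makes $Y^*$ uniformly convex, hence reflexive, so $Y$ is reflexive; similarly, assumption 4 makes $X$ reflexive. In a reflexive Banach sequence lattice the unit vectors form an unconditional basis, so $c_{00}$ is dense. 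Should this step be delicate, I would simply add the density assumption, exactly as in Theorem~\ref{theorem-p-convexity}.

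Fix $\eps>0$ and let $\eta>0$ be given by Theorem~\ref{real-fact-3} applied with $\eps/4$. Take $A\in B_{\mathcal{L}(^2X\times Y;\K)}$ with $\|A\|>1-\eta/4$. By density of finitely supported vectors and continuity of $A$, there are finitely supported $x_0\in S_X$ and $y_0\in S_Y$ with $|A(x_0,y_0)|>1-\eta/2$. Put $F=\supp x_0$ and $G=\supp y_0$. The set $U_0=\{B:|B(x_0,y_0)-A(x_0,y_0)|<\eta/2\}$ is a $w^*$-neighborhood of $A$, because evaluation at $x_0\otimes y_0$ is $w^*$-continuous. For every $B\in U_0\cap B_{\mathcal{L}(^2X\times Y;\K)}$ we have $|B(x_0,y_0)|>1-\eta$, so Theorem~\ref{real-fact-3} gives $\|B-R_{F,G}\circ B\|<\eps/4$.

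The map $B\mapsto R_{F,G}\circ B$ has finite-dimensional range: it is determined by the finitely many numbers $B(e_i,e_j)$ with $i\in F$, $j\in G$. It is also $w^*$-continuous, since each of these numbers is the pairing of $B$ with $e_i\otimes e_j$. On a finite-dimensional space all Hausdorff vector topologies coincide, so there is a $w^*$-neighborhood $U_1$ of $A$ with $\|R_{F,G}\circ B-R_{F,G}\circ A\|<\eps/4$ for all $B\in U_1$. Setting $U=U_0\cap U_1$, the four-term triangle inequality of Theorem~\ref{theorem-p-convexity} gives $\|B-C\|<\eps$ for all $B,C\in U\cap B_{\mathcal{L}(^2X\times Y;\K)}$. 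Hence $\diam(U\cap B)\le\eps$, and Proposition~\ref{obs:wstar-ukk} with $\eta/4$ in place of its $\eta$ yields the $w^*$-UKK.

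The only step with real content is the concentration estimate, which is exactly Theorem~\ref{real-fact-3}. The remaining obstacle is the bookkeeping around density of $c_{00}$, handled as described in the first paragraph.
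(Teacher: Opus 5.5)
Your proof is correct and follows the paper's approach: the paper only remarks that Theorem~\ref{real-fact-3} ``immediately yields'' the result, meaning the neighborhood argument of Theorem~\ref{theorem-p-convexity} with the bilinear concentration theorem as input, and that is exactly what you carry out. Your observation that density of $c_{00}$ is automatic here is also correct, and it supplies a point the paper's statement leaves implicit: hypotheses 3 and 4 make $Y^*$ and $X^*$ uniformly convex, so $X$ and $Y$ are reflexive, hence a subsequence of $(\pi_{\{1,\ldots,n\}}x)_n$ converges weakly, necessarily to $x$ (coordinate functionals are continuous), and therefore $x\in\overline{c_{00}}^{w}=\overline{c_{00}}$ by Mazur's theorem.
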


As a particular case of \hyperref[real-theorem-3]{Theorem \ref*{real-theorem-3}}, we recover \cite[Theorem 4]{DK}.

\begin{corollary} Let $2 < p,q < \infty$. Then, $\mathcal{L}(^2 \ell_p \times \ell_q; \K)$ has the $w^*$-UKK. 
\end{corollary}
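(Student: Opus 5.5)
The plan is to deduce the statement from Theorem \ref{real-theorem-3}, so the work is to check its four hypotheses for $X=\ell_p$ and $Y=\ell_q$ with suitable exponents.

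First, $\ell_p$ is $p$-convex with constant one. This is Proposition \ref{proposition-p-convex} applied with the constant weight one; alternatively, it follows directly from the identity $\|(\sum_j|x^j|^p)^{1/p}\|_p^p=\sum_j\|x^j\|_p^p$. In the same way, $\ell_q$ is $q$-convex with constant one. So we take $s=q$.

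Next, we need the moduli of convexity of the duals. We have $X^*=\ell_{p'}$ and $Y^*=\ell_{q'}$ with $1<p',q'<2$. By the fact recalled before Proposition \ref{real-fact-1} (see \cite[page 63]{LT2}), for $1<r<2$ the space $L_r$ has modulus of convexity of power type $2$. This holds equally for $\ell_r$, and in either the real or complex case, since the complexification does not change the Clarkson/Hanner type estimate. Hence $Y^*=\ell_{q'}$ has power type $2$. Since $2<p$, hypothesis 3 holds with exponent $2<p$. Similarly, $X^*=\ell_{p'}$ has power type $2<q=s$, so hypothesis 4 holds.

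Theorem \ref{real-theorem-3} then gives the $w^*$-UKK of $\mathcal{L}(^2\ell_p\times\ell_q;\K)=(\ell_p\pten\ell_q)^*$.

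The one point requiring care is that Theorem \ref{real-theorem-3} is obtained from Theorem \ref{real-fact-3}, whose proof passes through Proposition \ref{real-fact-1}, and the deduction from concentration to $w^*$-UKK needs finitely supported vectors to be dense. This density holds for $\ell_p$ and $\ell_q$ since $p,q<\infty$. Then the neighborhood argument of Theorem \ref{theorem-p-convexity} runs verbatim. Pick finitely supported $x_0,y_0$ with $|A(x_0,y_0)|$ close to $\|A\|$, and let $F=\supp x_0$, $G=\supp y_0$. Restrict to the $w^*$-neighborhood where $|B(x_0,y_0)-A(x_0,y_0)|$ is small. On that neighborhood every $B$ in the ball satisfies $\|B-R_{F,G}\circ B\|<\eps/4$. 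The map $B\mapsto R_{F,G}\circ B$ has finite-dimensional range and is $w^*$-continuous, which gives the diameter estimate.

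So the only substantive input is the power-type-$2$ modulus of convexity of $\ell_r$ for $1<r<2$, and I expect no real obstacle beyond citing it.
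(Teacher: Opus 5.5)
Your proposal is correct and follows the paper's own route: the corollary is a direct specialization of Theorem~\ref{real-theorem-3} with $X=\ell_p$, $Y=\ell_q$, $s=q$, and power type $2$ for the duals $\ell_{p'}$ and $\ell_{q'}$. Your remark that density of $c_{00}$ in $\ell_p$ and $\ell_q$ is what makes the concentration-to-$w^*$-UKK step work is a useful detail, since the paper leaves that hypothesis implicit in the statement of Theorem~\ref{real-theorem-3}.
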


\subsection{Negative results}
The assumption $p>2$ in the preceding results is essential. We conclude this section by showing that the corresponding weak-star Kadec--Klee properties fail when $1<p\le2$.

\begin{example}\label{prop:lp-small-p} 
Let $1<p\leq2$. Then $\mathcal{P}(^2\ell_p;\C)$ fails the sequential $w^*$-KK. For $m\geq2$, define $P_m(x)=x_1^2+x_m^2$ and $P(x)=x_1^2$. Since $2\geq p$, for every $x\in B_{\ell_p}$, we have that
\begin{equation*}
|P_m(x)|\leq|x_1|^2+|x_m|^2\leq|x_1|^p+|x_m|^p\leq1.
\end{equation*}
Hence $\|P_m\|=\|P\|=1$. For every fixed $x\in\ell_p$, $x_m\longrightarrow0$ as $m\rightarrow\infty$ and therefore $P_m\xrightarrow{w^*}P$ by \hyperref[lem:wstar]{Lemma~\ref*{lem:wstar}}. Finally, for every $m\geq2$, $\|P_m-P\|=\|x_m^2\|=1$.
\end{example}

\begin{example} \label{ex:Lorentz-fails-1-p-2} Let $1<p\leq2$. Then, $\mathcal{P}(^2 d(w,p); \C)$ does not have the sequential $w^*$-KK. For $m \geq 2$, we define 
\begin{equation*}
    Q_m(x):= x(1)^2 + [w(2)]^{2/p} x(m)^2
\end{equation*}
for every $x \in d(w,p)$. Define also $Q(x) = x(1)^2$ for every $x \in d(w,p)$. We start by showing that $\|Q_m\| = \|Q\| = 1$ for every $m \geq 2$. Indeed, fix $x \in d(w,p)$. Since $(x^{\star}(k))_{k=1}^{\infty}$ is obtained by arranging all the numbers $(|x(k)|)_{k=1}^{\infty}$ in decreasing order, one can show that
\begin{equation} \label{lorentz-eq1}
   \|x\|_{d(w,p)}^p \geq \max \{ |x(1)|, |x(m)| \}^p + w(2) \min \{ |x(1)|, |x(m)| \}^p. 
\end{equation}
Now, since $2/p \geq 1$, for every $\alpha, \beta \geq 0$, 
\begin{equation} \label{lorentz-eq2}  
\alpha^{2/p} + \beta^{2/p} \leq (\alpha + \beta)^{2/p}.
\end{equation} 
We consider two cases. Suppose first that $|x(1)| \geq |x(m)|$. In this case, we have
\begin{eqnarray*}
|Q_m(x)| &\leq& |x(1)|^2 + [w(2)]^{2/p} |x(m)|^2 \\
&=& (|x(1)|^p)^{2/p} + (w(2) |x(m)|^p)^{2/p} \\
&\stackrel{(\ref{lorentz-eq2})}{\leq}& (|x(1)|^p + w(2) |x(m)|^p)^{2/p} 
\stackrel{(\ref{lorentz-eq1})}{\leq} \|x\|_{d(w,p)}^2.
\end{eqnarray*}
Suppose now that $|x(m)| \geq |x(1)|$. Since $0 < w(2)^{2/p} \leq 1$, we have that 
\begin{equation*}
   |x(1)|^2 + [w(2)]^{2/p} |x(m)|^2 \leq |x(m)|^2 + [w(2)]^{2/p} |x(1)|^2
\end{equation*}
and, therefore, using once again (\ref{lorentz-eq2}) and then (\ref{lorentz-eq1}), we get that $|Q_m(x)| \leq \|x\|_{d(w,p)}^2$. This implies that $\|Q_m\| \leq 1$ for every $m \geq 2$. Since $Q_m(e_1) = 1 = \|e_1\|_{d(w,p)}$, we have that $\|Q_m\| = 1$ for every $m \geq 2$. Similarly, we can prove that $\|Q\| = 1$.

On the other hand, for every $x \in d(w,p) \subseteq c_0$, we have that $Q_m(x) \rightarrow Q(x)$ and then $Q_m \xrightarrow{w^*} Q$. However, for every $m \geq 2$, we have that $(Q_m - Q)(x) = [w(2)]^{2/p} x(m)^2$ for every $x \in d(w,p)$. Since $x \mapsto x(m)^2$ is a 2-homogeneous polynomial of norm-one and $|x(m)| \leq \|x\|_{d(w,p)}$ gives an upper bound while evaluating at $e_m$ gives equality, we have that $\|Q_m - Q\| = w(2)^{2/p}$ for every $m \geq 2$. 
\end{example}

For the Garling space, one does not have the two cases from \hyperref[ex:Lorentz-fails-1-p-2]{Example \ref*{ex:Lorentz-fails-1-p-2}} and the argument is simpler. For that reason, we omit it.

\begin{example} \label{ex:Garling-fails-1-p-2} Let $1 < p \leq 2$. Then $\mathcal{P}(^2 g(w,p); \C)$ fails the sequential $w^*$-KK.
\end{example}

\subsection{Consequences and characterizations}
In \cite{DK} the authors show that for real spaces, $\mathcal{L}(^2 \ell_p\times\ell_q;\R)$ has the $w^*$-UKK if and only if $2<p,q < \infty$. Combining the positive results established above with the counterexamples from the previous subsection (see \hyperref[prop:lp-small-p]{Examples \ref*{prop:lp-small-p}}, \ref{ex:Lorentz-fails-1-p-2} and \ref{ex:Garling-fails-1-p-2}), we also obtain a complete characterization of the $w^*$-UKK for $2$-homogeneous polynomials on the classical sequence spaces considered in this paper.

\begin{corollary}\label{cor:lp-lorentz-garling-classification}
Let $1<p<\infty$, and let $w=(w(k))_{k=1}^{\infty}$ be a decreasing sequence of positive numbers satisfying $w(1)=1$, $\lim_{k\to\infty}w(k)=0$ and $\sum_{k=1}^{\infty}w(k)=\infty$. For each of the complex spaces $X=\ell_p(\C)$, $X=d(w,p)$ or $X=g(w,p)$, the following assertions are equivalent.
\begin{enumerate}[\rm (a)]
\item $2 < p < \infty$.
\item $\mathcal{P}(^2X;\C)$ has the $w^*$-UKK.
\item $\mathcal{P}(^2X;\C)$ has the sequential $w^*$-UKK.
\item $\mathcal{P}(^2X;\C)$ has the $w^*$-KK.
\item $\mathcal{P}(^2X;\C)$ has the sequential $w^*$-KK.
\end{enumerate}
\end{corollary}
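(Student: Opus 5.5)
The plan is the cycle of implications (a)$\Rightarrow$(b)$\Rightarrow$(c)$\Rightarrow$(e) together with (b)$\Rightarrow$(d)$\Rightarrow$(e)$\Rightarrow$(a). The only implications with content are (a)$\Rightarrow$(b) and (e)$\Rightarrow$(a); everything else is formal.

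For (a)$\Rightarrow$(b), take $2<p<\infty$. The spaces $\ell_p(\C)$, $d(w,p)$ and $g(w,p)$ are all of the form $X_{\mathcal A,p}$, using the weight families from the proof of the corollary following Proposition~\ref{proposition-p-convex}. By Proposition~\ref{proposition-p-convex} they are $p$-convex with constant one. Their unit vectors form a Schauder basis, so $c_{00}$ is dense. Theorem~\ref{theorem-p-convexity} then gives that $\mathcal{P}(^2X;\C)$ has the $w^*$-UKK.

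The formal implications go as follows.

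\begin{itemize}
\item[(b)$\Rightarrow$(c)] Sequences are nets indexed by $\N$, which has no maximal element.
\item[(b)$\Rightarrow$(d)] Let $(P_\alpha)\subseteq S$ be a net with $P_\alpha\xrightarrow{w^*}P\in S$. Given $\eps>0$, use Proposition~\ref{obs:wstar-ukk} to obtain a $w^*$-neighbourhood $U$ of $P$ with $\diam(U\cap B)<\eps$. Eventually $P_\alpha\in U\cap B$, so $\|P_\alpha-P\|<\eps$.
\item[(d)$\Rightarrow$(e)] This is trivial.
\item[(c)$\Rightarrow$(e)] Argue by contradiction. Suppose $P_n\xrightarrow{w^*}P$ with all norms equal to one but $\|P_n-P\|\geq\eps_0$ along a subsequence. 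Since $P_n-P\xrightarrow{w^*}0$ and the norm is $w^*$-lower semicontinuous, $\liminf_n\|P_n-P_m\|\geq\|P-P_m\|\geq\eps_0$ for each fixed $m$. So we can inductively extract a subsequence with $\sep\geq\eps_0/2$. This subsequence is $w^*$-convergent to $P$ and lies in the ball, so the sequential $w^*$-UKK forces $\|P\|\leq1-\delta<1$, a contradiction.
\end{itemize}

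The main step is (e)$\Rightarrow$(a), proved by contraposition: if $1<p\leq2$, then the sequential $w^*$-KK fails. This is exactly what Examples~\ref{prop:lp-small-p}, \ref{ex:Lorentz-fails-1-p-2} and \ref{ex:Garling-fails-1-p-2} establish for $\ell_p$, $d(w,p)$ and $g(w,p)$ respectively.

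For the Garling case the proof was omitted in the paper, so I would supply it. Take $Q_m(x)=x(1)^2+w(2)^{2/p}x(m)^2$ and $Q(x)=x(1)^2$. Since $1<m$, the increasing map $\phi$ with $\phi(1)=1$, $\phi(2)=m$ gives
\[
\|x\|^p\geq |x(1)|^p+w(2)|x(m)|^p,
\]
and here the order is forced, so no case split is needed. Then inequality \eqref{lorentz-eq2} gives $\|Q_m\|\leq1$, and evaluating at $e_1$ gives equality. Weak-star convergence follows from Lemma~\ref{lem:wstar}, because $x(m)\to0$: the unit vectors form a basis, so the coordinates of any $x$ tend to zero. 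Finally $\|Q_m-Q\|=w(2)^{2/p}$, which is bounded away from zero.

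The only real obstacle is this verification of the Garling example together with the lower-semicontinuity extraction in (c)$\Rightarrow$(e); both are short.
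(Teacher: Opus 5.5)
Your proposal is correct and follows the paper's route. The implication (a)$\Rightarrow$(b) comes from Proposition~\ref{proposition-p-convex} and Theorem~\ref{theorem-p-convexity}, and the failure for $1<p\leq 2$ comes from the three examples; the paper leaves the formal implications among (b)--(e) and the Garling computation implicit, and your treatment of both (the lower-semicontinuity extraction for (c)$\Rightarrow$(e), and the increasing map with $\phi(1)=1$, $\phi(2)=m$) is sound.
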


\section{Negative result for higher degrees}\label{sec: higher degrees}

In this section we establish several general results showing that, in all higher degrees, weak-star Kadec--Klee properties fail in a very robust way. The main tool is the Josefson-Nissenzweig theorem (see, for instance, \cite{Josefson, Nissenzweig}), which provides weak-star null sequences of norm-one functionals. By combining such sequences with suitable polynomial and multilinear perturbations, we construct bounded weak-star convergent sequences that remain uniformly separated in norm.

We begin by stating two simple lemmas that will be used repeatedly throughout this section.

\begin{lemma}\label{JN-lemma} Let $X$ be an infinite-dimensional Banach space and let $x_0 \in X \setminus \{0\}$. There exists a sequence $(g_m) \subseteq S_{X^*}$ such that $g_m \xrightarrow{w^*} 0$ and $g_m(x_0) = 0$ for every $m \in \N$.
\end{lemma}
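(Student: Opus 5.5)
We will reduce the statement to the Josefson--Nissenzweig theorem applied to a suitable closed subspace of codimension one. Fix $x_0 \neq 0$ and, by Hahn--Banach, choose $f\in S_{X^*}$ with $f(x_0)=\|x_0\|$. Set $Y:=\ker\varphi$ for some $\varphi\in X^*$ with $\varphi(x_0)\neq 0$, or, more conveniently, work with a bounded projection: let $P:X\to X$ be $P(x)=x-\frac{f(x)}{\|x_0\|}x_0$. Then $P$ is a bounded projection onto $Y=\ker f$, with $\|P\|\leq 2$, and $P(x_0)=0$. Since $X$ is infinite-dimensional, so is $Y$.

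By the Josefson--Nissenzweig theorem applied to $Y$, there exists a sequence $(h_m)\subseteq S_{Y^*}$ with $h_m\xrightarrow{w^*}0$ in $Y^*$. Define $\tilde g_m:=h_m\circ P\in X^*$. Then $\tilde g_m(x_0)=h_m(Px_0)=0$. For every $x\in X$ we have $\tilde g_m(x)=h_m(Px)\to 0$, so $\tilde g_m\xrightarrow{w^*}0$. For the norm, on the one hand $\|\tilde g_m\|\leq \|h_m\|\|P\|\leq 2$. On the other hand, since $P$ restricted to $Y$ is the identity, $\|\tilde g_m\|\geq \sup_{y\in B_Y}|h_m(y)|=1$. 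Thus $1\leq\|\tilde g_m\|\leq 2$.

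Finally, normalize by setting $g_m:=\tilde g_m/\|\tilde g_m\|\in S_{X^*}$. Because the norms lie in $[1,2]$, we have $|g_m(x)|\leq|\tilde g_m(x)|\to0$ for every $x$, so $g_m\xrightarrow{w^*}0$. Moreover $g_m(x_0)=0$ for every $m$, which proves the lemma.

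The only nontrivial input is the Josefson--Nissenzweig theorem itself. The one point needing care is the normalization step, and it is exactly why we keep the lower bound $\|\tilde g_m\|\geq 1$: a norm bounded away from zero guarantees that dividing by it preserves weak-star nullity. This lower bound comes for free from $P|_Y=\id$.
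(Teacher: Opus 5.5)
Your proof is correct. It differs from the paper's only in which codimension-one space you apply Josefson--Nissenzweig to, and in how you pull the functionals back to $X$.

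The paper passes to the quotient $X/\K x_0$ with quotient map $Q$. It takes a weak-star null sequence $(\tilde g_m)\subseteq S_{(X/\K x_0)^*}$ and sets $g_m=\tilde g_m\circ Q$. Since $Q^*$ is a weak-star continuous isometry onto the annihilator of $\K x_0$, the $g_m$ automatically have norm exactly one, vanish at $x_0$ and are weak-star null, with no further work.

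You instead use the complemented subspace $Y=\ker f$ and pull back through the projection $P$. Since $P^*$ is only an isomorphic embedding, with $1\le\|h_m\circ P\|\le 2$, you need the renormalization step. Your lower bound coming from $P|_Y=\id$ handles that correctly.

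The two constructions agree up to an isomorphism: $P=(Q|_Y)^{-1}\circ Q$, so your sequence is the paper's, transported along the isomorphism $Y\cong X/\K x_0$ and then renormalized. The quotient version is slightly more economical, since it needs neither the Hahn--Banach choice of $f$ nor the normalization. Yours stays inside $X$, working with a subspace rather than a quotient, at the cost of those two extra steps.
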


\begin{proof} Let $Q:X\longrightarrow X/\K x_0$ be the quotient map. Since $X/\K x_0$ is infinite-dimensional, the Josefson--Nissenzweig theorem applied to the quotient gives a sequence $(\widetilde g_m)\subseteq S_{(X/\K x_0)^*}$ such that $\widetilde g_m\xrightarrow{w^*}0$. Define $g_m=Q^*(\widetilde g_m)=\widetilde g_m\circ Q$. Then $(g_m)\subseteq S_{X^*}$, $g_m\xrightarrow{w^*}0$ and $g_m(x_0)=0$ for every $m\in\N$.
\end{proof}

\begin{lemma} \label{lemma: pol complex} Let $X$ be a Banach space with $\dim X \geq 2$. Then, there exist $f,g \in X^*$ such that $\|f\| = 1$, $g\not=0$ and 
\begin{equation} \label{lemma: pol complex1}
    |f(x)|^2 + |g(x)|^2 \leq \|x\|^2 
\end{equation}
for every $x \in X$. 
\end{lemma}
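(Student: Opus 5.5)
The plan is to take $f$ to be a norming functional for a suitable vector and $g$ to be a small multiple of a second functional that vanishes at that vector. The smallness will absorb the quadratic error that appears when we move away from the norming point.

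Since $\dim X\geq 2$, I would first fix two linearly independent vectors and work inside the two-dimensional subspace $E$ they span. The plan is to construct $f_0,g_0\in E^*$ with $\|f_0\|_{E^*}=1$, $g_0\neq 0$ and $|f_0(x)|^2+|g_0(x)|^2\leq\|x\|^2$ for all $x\in E$. Then extend $f_0$ and $g_0$ to $X$ by Hahn--Banach, preserving their norms. This extension step is delicate, because inequality \eqref{lemma: pol complex1} on $E$ does not transfer to $X$ by norm preservation alone. I would avoid the issue by working on all of $X$ from the start instead of restricting to $E$.

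Concretely, choose $x_0\in S_X$ and $f\in S_{X^*}$ with $f(x_0)=1$. Since $\dim X\geq 2$, the kernel $\ker f$ is nontrivial, and every $x\in X$ decomposes as $x=f(x)x_0+(x-f(x)x_0)$. The natural first attempt is $g=c\,h$ with $h\in S_{X^*}$ and $c>0$ small. Then $|f(x)|^2+c^2|h(x)|^2\leq\|x\|^2$ requires $c^2|h(x)|^2\leq\|x\|^2-|f(x)|^2$. This fails whenever $|f(x)|=\|x\|$ but $h(x)\neq0$. So I need $h$ to vanish on every vector at which $f$ attains its norm, and to be quantitatively small near such vectors. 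That requires a quadratic lower bound
\begin{equation*}
\|x\|^2-|f(x)|^2\geq c^2|h(x)|^2,
\end{equation*}
which is a smoothness/convexity-type condition on the norm at $x_0$ and does not hold in general. For instance, in $\ell_\infty^2$ with $f=e_1^*$, the vectors $x=(1,\theta)$ with $|\theta|\leq1$ give $\|x\|=|f(x)|$, so $h$ would have to vanish on all of them, forcing $h=0$.

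The fix is to choose $f$ more cleverly, so that its norm is attained only along a single direction. The key tool is a Bishop--Phelps/exposed point argument applied to the two-dimensional space $E$. Every finite-dimensional convex body has exposed points, and in two dimensions the ball $B_E$ has a point $x_0$ that is exposed and at which the boundary curvature is positive. I would take $x_0$ to be a point of $S_E$ maximizing the Euclidean norm for some fixed inner product on $E$. At such a point $B_E$ lies inside a Euclidean ball touching $S_E$ at $x_0$. Let $f_0$ be the corresponding supporting functional, which is the normalized Euclidean dual vector, and let $g_0$ be $c$ times the orthogonal functional.

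The Euclidean comparison then yields $|f_0(x)|^2+|g_0(x)|^2\leq\|x\|^2$ on $E$, where $c$ is chosen using the equivalence constant of the two norms. In the complex case the argument is run on the real structure, using $\re$ and a rotation.

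Finally, to pass to $X$, use the fact that $\|x\|\geq\|\pi x\|$ for a suitable norm-one projection-like quotient. Concretely, replace $E$ by a two-dimensional quotient $X/W$, where $W$ has codimension $2$. Define $f=f_0\circ q$ and $g=g_0\circ q$. Since $\|q x\|\leq\|x\|$, inequality \eqref{lemma: pol complex1} follows, and $\|f\|=\|f_0\|=1$ because $q$ maps the open ball onto the open ball. The main obstacle is the curvature point in the two-dimensional step, and the quotient trick is what makes the passage to $X$ painless.
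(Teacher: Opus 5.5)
Your proof is correct. The subspace/Hahn--Banach attempt, the naive choice $g=ch$, and the appeal to exposed points, curvature and Bishop--Phelps are never used and can be dropped. What actually proves the lemma is the following. Take $W=\ker\varphi_1\cap\ker\varphi_2$ for linearly independent $\varphi_1,\varphi_2\in X^*$, so that $W$ is closed of codimension $2$. Fix an inner product on $E=X/W$ and let $x_0\in S_E$ maximize $|\cdot|_2$ on $S_E$, which exists by compactness. Put $R=|x_0|_2$, $f_0=\langle\cdot,x_0\rangle/R^2$ and $g_0=\langle\cdot,u\rangle/R$, where $u\perp x_0$ and $|u|_2=1$.

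Parseval in the orthonormal basis $\{x_0/R,u\}$ gives $|f_0(e)|^2+|g_0(e)|^2=|e|_2^2/R^2\le\|e\|^2$. Since also $f_0(x_0)=1$, we get $\|f_0\|=1$, and composing with $q$ finishes exactly as you say. You should state $c=1/R$ and this identity explicitly, since they are the entire argument.

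For complex scalars, do not pass to the underlying real structure: that would only produce real-linear functionals. Use a Hermitian inner product on $E$ instead, and the same computation goes through verbatim.

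The paper argues on the dual side. It takes an injective $T\colon\ell_2^2\to X^*$ with $\|T\|=1$ and picks $a\in S_{\ell_2^2}$ with $\|Ta\|=1$. After rotating so that $a=e_1$, it sets $f=Te_1$ and $g=Te_2$. The inequality then follows from $(|f(x)|^2+|g(x)|^2)^{1/2}=\sup_{|\alpha|^2+|\beta|^2\le1}|(\alpha f+\beta g)(x)|\le\|x\|$.

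Your construction is the predual form of the same idea. Here $E^*\cong W^\perp$ is a two-dimensional subspace of $X^*$, and $(\alpha,\beta)\mapsto\alpha f_0+\beta g_0$ is a norm-one map $\ell_2^2\to X^*$ attaining its norm at $e_1$. The contact point $x_0$ of your circumscribing Euclidean ball determines the norming direction $f_0$, which plays the role of the paper's $Ta$.

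The paper's version is shorter: it needs neither the quotient map nor the identification of the supporting functional at the contact point. Yours makes the underlying geometry explicit, namely a Euclidean ball containing $B_E$ and touching $S_E$ at $x_0$.
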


\begin{proof} Take an injective linear operator $T: \ell_2^2 \rightarrow X^*$ and normalize it so that $\|T\| = 1$. Take $a \in S_{\ell_2^2}$ to be such that $\|T(a)\| = 1$. Composing with a rotation on $\ell_2^2$, we may assume $a=e_1$.  Define then $f:= Te_1$ and $g:= Te_2$. Then, $\|f\| = \|Te_1\| = 1$ and also $g\not=0$ as $T$ is injective. Finally, for every $x \in X$, we have that
\begin{eqnarray*}
    \left( |f(x)|^2 + |g(x)|^2 \right)^{1/2} &=& \sup_{|\alpha|^2+|\beta|^2\leq1} |\alpha f(x) + \beta g(x)| \\
    &\leq& \sup_{|\alpha|^2+|\beta|^2\leq1} \|\alpha f + \beta g\| \|x\| \\
    &=& \sup_{|\alpha|^2+|\beta|^2\leq1} \|T(\alpha e_1 + \beta e_2)\| \|x\| \\
    &\leq& \sup_{|\alpha|^2+|\beta|^2\leq1}\|T\| \| \alpha e_1 + \beta e_2\|_2 \|x\| \leq \|x\|.
\end{eqnarray*}
\end{proof}

\subsection{Homogeneous polynomials}

We first study spaces of homogeneous polynomials. The results below show that, apart from the $2$-homogeneous complex case, weak-star Kadec--Klee properties fail in every degree. The real and complex settings require slightly different constructions. In the real case the argument already works for degree two, whereas in the complex case the obstruction appears from degree three onwards.

\begin{theorem} \label{prop:pol real}
Let $X$ be an infinite-dimensional real Banach space and let $n\geq2$. Then $\mathcal{P}(^nX;\R)$ fails the $w^*$-KK.
\end{theorem}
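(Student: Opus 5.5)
The plan is to build an explicit sequence in $S_{\mathcal{P}(^nX;\R)}$ that converges weak-star to a norm-one polynomial but stays uniformly separated from it in norm. This already contradicts the sequential $w^*$-KK, and hence the $w^*$-KK. The perturbation will come from a weak-star null sequence given by \hyperref[JN-lemma]{Lemma~\ref*{JN-lemma}}, and the boundedness will come from the elementary real inequality $|a^2-b^2|\leq\max\{a^2,b^2\}$.

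\emph{Construction.} Fix $f\in S_{X^*}$. Choose $x_0\in S_X$ with $f(x_0)>0$ close to $1$, say $f(x_0)>1/2$; $f$ need not attain its norm. Apply \hyperref[JN-lemma]{Lemma~\ref*{JN-lemma}} to $x_0$ to get $(g_m)\subseteq S_{X^*}$ with $g_m\xrightarrow{w^*}0$ and $g_m(x_0)=0$. Define
\[
P_m(x)=f(x)^{n-2}\bigl(f(x)^2-g_m(x)^2\bigr),\qquad P(x)=f(x)^n .
\]
For $x\in B_X$, put $a=f(x)$ and $b=g_m(x)$, so $a,b\in[-1,1]$. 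Then
\[
|P_m(x)|\leq |a|^{n-2}\max\{a^2,b^2\}\leq 1 .
\]
This is the step that uses real scalars: in the complex case $a^2-b^2$ can have modulus $2$. Since $\|f\|=1$, we have $\|P\|=1$. Moreover $P_m$ coincides with $P$ along the directions where $g_m$ vanishes: for each $\eps$, pick $u\in S_X$ with $f(u)>1-\eps$, correct it so that $g_m(u)=0$, or simply observe that $\sup_{B_X}|P_m|\geq \sup_{B_X\cap\ker g_m}|f|^n=1$ because $\ker g_m$ is a hyperplane on which $f$ still has norm $1$ unless $f\in\R g_m$. The case $f\in\R g_m$ fails for large $m$, since $g_m\to 0$ weak-star while $f\neq 0$. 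If needed, one can instead rescale $P_m$ by $\|P_m\|^{-1}\to 1$, which does not affect the argument. Hence $\|P_m\|=1$.

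\emph{Weak-star convergence.} For every fixed $x$ we have $g_m(x)\to0$, so $P_m(x)\to P(x)$. The sequence is bounded, so \hyperref[lem:wstar]{Lemma~\ref*{lem:wstar}} gives $P_m\xrightarrow{w^*}P$.

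\emph{Separation.} This is the main point: $\|P_m-P\|=\|f^{n-2}g_m^2\|$, and we need a point where $f$ and $g_m$ are large simultaneously. Pick $y_m\in B_X$ with $|g_m(y_m)|>1/2$. Replacing $y_m$ by $-y_m$ if necessary, which does not change $g_m(y_m)^2$, we may assume $f(y_m)\geq0$. Let $x=(x_0+y_m)/2\in B_X$. Then $f(x)\geq f(x_0)/2>1/4$, and $|g_m(x)|=|g_m(y_m)|/2>1/4$ because $g_m(x_0)=0$. Therefore
\[
\|P_m-P\|\geq 4^{-n},
\]
uniformly in $m$, so $\|P_m-P\|\not\to0$. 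This shows that even the sequential $w^*$-KK fails.
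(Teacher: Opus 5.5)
Your construction is the paper's: the same polynomials $P_m=f^{n-2}(f^2-g_m^2)$ and $P=f^n$, with $(g_m)$ taken from the Josefson--Nissenzweig lemma with $g_m(x_0)=0$. The bound $\|P_m\|\le 1$ uses real scalars as in the paper, and weak-star convergence comes from pointwise convergence. Two points differ.

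First, your justification of $\|P_m\|=1$ is wrong as written. The norm of $f$ restricted to $\ker g_m$ is $\operatorname{dist}(f,\R g_m)$, not $\|f\|$, and it can be strictly less than $1$ even when $f\notin\R g_m$. For instance, in $\ell_2^2$ with $f=e_1$ and $g=(e_1+e_2)/\sqrt2$, the restriction has norm $1/\sqrt2$.

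Your rescaling fallback does work. You have $\|P_m\|\le1$ and $\liminf_m\|P_m\|\ge\lim_m|P_m(x)|=|f(x)|^n$ for each $x\in B_X$, so $\|P_m\|\to1$. Then $\bigl\|P_m/\|P_m\|-P_m\bigr\|=1-\|P_m\|\to0$, which preserves both the weak-star limit and the separation.

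The cleaner fix is what the paper does. By Hahn--Banach you may take $f\in S_{X^*}$ norming some $x_0\in S_X$, so there is no reason to allow a non-norm-attaining $f$. Then $g_m(x_0)=0$ gives $P_m(x_0)=1$ directly.

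Second, your separation argument is a genuinely different route. The paper bounds $\|f^{n-2}g_m^2\|\ge\|f\|^{n-2}\|g_m\|^2/n^n$ using the Ben\'itez--Sarantopoulos--Tonge inequality for products of linear functionals. Your midpoint trick $x=(x_0\pm y_m)/2$ is self-contained and elementary, and gives the constant $4^{-n}$ (or $2^{-n-2}$ once $f(x_0)=1$). It exploits the vanishing $g_m(x_0)=0$, which the paper uses only to compute $\|P_m\|$.

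The BST route needs no relation between the two functionals. That is why the paper can reuse it unchanged for the complex polynomials of degree $n\ge3$ and for symmetric multilinear forms, where the weak-star null sequence is not assumed to vanish at a point where the other factor is large.

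Both arguments produce sequences, so both show that even the sequential $w^*$-KK fails.
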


\begin{proof}
Fix $n\geq2$. Let $f \in S_{X^*}$ be such that $f(x_0) = 1$ for some $x_0\in S_X$. Let $(g_m) \subseteq S_{X^*}$ be the sequence from \hyperref[JN-lemma]{Lemma \ref*{JN-lemma}}. For every $x\in X$, define $P(x)=f(x)^n$ and, for every $m\in\N$,
\begin{equation*}
P_m(x)=f(x)^{n-2}\bigl(f(x)^2-g_m(x)^2\bigr).
\end{equation*}
For every $x\in B_X$, both $f(x)$ and $g_m(x)$ are real numbers in $[-1,1]$. Therefore,
\begin{equation*}
|P_m(x)|\leq |f(x)|^{n-2}|f(x)^2-g_m(x)^2|\leq1.
\end{equation*}
Since $P_m(x_0)=P(x_0)=1$, we have $\|P\|=\|P_m\|=1$ for every $m\in\N$. Notice also that $P_m(x)\longrightarrow P(x)$ for every $x\in X$. By \hyperref[lem:wstar]{Lemma~\ref*{lem:wstar}}, $P_m\xrightarrow{w^*}P$ in $\mathcal{P}(^nX;\R)$. We prove that norm convergence fails.  By the results in \cite{BST} we have that
\begin{equation*}
\|P_m-P\|
=\|f^{n-2}g_m^2\| \geq \frac{\|f\|^{n-2} \|g_m\|^2}{n^n} =\frac{ 1}{n^n}.
\end{equation*}
\end{proof}

This result also shows that the polynomial and bilinear settings behave quite differently over the real scalars. Indeed, Dilworth and Kutzarova \cite{DK} proved that $\mathcal{L}(^2\ell_p;\R)$ has the $w^*$-UKK whenever $2<p<\infty$. Recall that in this case, $\ell_p\pten \ell_p$ is separable and reflexive (see \cite[Corollary 4.24]{ryan2002introduction}). Consequently, the same is true for the symmetric subspace $\mathcal{L}_s(^2\ell_p;\R)$. Since, by \hyperref[prop:pol real]{Theorem \ref*{prop:pol real}}, $\mathcal{P}(^2\ell_p;\R)$ never has the $w^*$-KK, it follows that neither the $w^*$-KK for bilinear forms nor its symmetric counterpart implies the corresponding property for homogeneous polynomials, at least in the real setting. Whether the converse implication holds in the complex setting, namely whether the $w^*$-KK of $\mathcal{P}(^2X;\C)$ forces the same property for $\mathcal{L}(^2X;\C)$ or $\mathcal{L}_s(^2X;\C)$, remains open.

The previous argument can be adapted to prove a complex version of \hyperref[prop:pol real]{Theorem \ref*{prop:pol real}} but now for $n \geq 3$. In fact, the next proof works for both real and complex Banach spaces.

\begin{theorem} \label{prop:pol complex} Let $X$ be a complex infinite-dimensional Banach space and $n\geq 3$. Then, $\mathcal{P}(^n X; \C)$ fails the sequential $w^*$-KK.
\end{theorem}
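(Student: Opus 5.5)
We adapt the construction in the proof of Theorem~\ref{prop:pol real}. The real case used $|f^2-g_m^2|\le 1$ on $B_X$, which fails over $\C$. We replace it with the inequality $|f(x)|^2+|g(x)|^2\le\|x\|^2$ from Lemma~\ref{lemma: pol complex}, and use the extra degree $n\ge3$ to absorb a mixed factor.

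\emph{Choice of functionals.} Pick $x_0\in S_X$ and $f\in S_{X^*}$ with $f(x_0)=1$. Lemma~\ref{JN-lemma} gives a sequence $(g_m)\subseteq S_{X^*}$ with $g_m\xrightarrow{w^*}0$ and $g_m(x_0)=0$. Lemma~\ref{lemma: pol complex} cannot be applied with $f$ and $g_m$ directly. Instead we fix a scaling: for $x\in B_X$ set $a=|f(x)|\le1$ and $b=|g_m(x)|\le1$.

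\emph{Definition of the polynomials.} Take
\[
P(x)=f(x)^n,\qquad P_m(x)=f(x)^n+c\,f(x)^{n-2}g_m(x)^2,
\]
with $c>0$ a small constant independent of $m$. We need $a^n+c\,a^{n-2}b^2\le1$ whenever $a,b\le1$ and $x\in B_X$. This fails without a joint constraint on $(a,b)$, since $a=b=1$ is not excluded. A joint constraint comes from $\|f+\lambda g_m\|\le 2$, which is too weak, so the choice of $P_m$ will be adjusted as follows.

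\emph{Adjusted construction.} Use $f^{n-1}$ in place of $f^{n-2}$ times a square:
\[
P_m(x)=f(x)^{n-1}\bigl(f(x)+c\,g_m(x)\bigr)\cdot\theta ,
\]
and then handle the normalization. The cleaner route is to make the perturbation vanish to high order at $x_0$. With $a=|f(x)|$, we have $\|f+c g_m\|\le 1+c$, so $|P_m(x)|\le a^{n-1}(a+cb)$. Near $a=1$ this is not controlled, because $x$ can have $f(x)=1$ and $g_m(x)\ne0$.

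The resolution is the standard one: a norm-one $P_m$ does not need to share the value at $x_0$ in order to give separation. It is enough that $P_m\xrightarrow{w^*}P$, $\|P_m\|\to\|P\|=1$, and $\|P_m-P\|\ge\delta$. Setting $\tilde P_m=P_m/\|P_m\|$ then gives a contradiction with the sequential $w^*$-KK. So we take $P_m=f^n+c\,f^{n-2}g_m^2$. Pointwise convergence holds, so $P_m\xrightarrow{w^*}P$ by Lemma~\ref{lem:wstar}, and $\|P_m\|\ge|P_m(x_0)|=1$. For the separation we use the result of \cite{BST}:
\[
\|P_m-P\|=c\|f^{n-2}g_m^2\|\ge c/n^n .
\]

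\emph{Main obstacle.} The key estimate is $\limsup_m\|P_m\|\le1+o(1)$, or more precisely $\|P_m\|\le1$ after choosing the form of the perturbation correctly. This is where $n\ge3$ and complex scalars must enter: at points where $|f|$ is near $1$, the factor $g_m$ must be small. We would try to obtain this by passing, via Lemma~\ref{lemma: pol complex} applied on two-dimensional quotients, to a bound of the form $|f(x)|^2+\kappa|g_m(x)|^2\le\|x\|^2$ with $\kappa>0$ uniform in $m$. Such a bound follows if the $g_m$ are chosen, via the Josefson--Nissenzweig construction, in an almost-$\ell_1$/almost-$\ell_2$ position relative to $f$. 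Granting it, $a^2+\kappa b^2\le1$ gives
\[
a^n+c\,a^{n-2}b^2\le a^{n-2}\bigl(a^2+(c/\kappa)(1-a^2)\bigr)\le1
\]
for $c\le\kappa$, since $n-2\ge1$. This would close the argument.
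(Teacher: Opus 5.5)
\textbf{There is a genuine gap.} It is the step you flag yourself: the bound $|f(x)|^2+\kappa|g_m(x)|^2\le\|x\|^2$, with $\kappa>0$ independent of $m$, is only granted, and it is false in general.

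First, your closing estimate never uses $n\ge3$. For $n=2$ it gives $|f(x)^2+c\,g_m(x)^2|\le a^2+cb^2\le1$ on $B_X$ when $c\le\kappa$. At the same time, $f^2+c\,g_m^2\to f^2$ pointwise, its norm is at least $|f(x_0)|^2=1$, and $\|c\,g_m^2\|=c$. So the granted bound would make $\mathcal{P}(^2X;\C)$ fail the sequential $w^*$-KK. That contradicts Theorem~\ref{theorem-p-convexity} for $X=c_0(\C)$ or $X=\ell_p(\C)$ with $p>2$.

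Second, the fallback ``$\|P_m\|\to1$ and normalize'' does not rescue the construction either. In $c_0(\C)$, take any $f\in S_{\ell_1}$ and any $w^*$-null $(g_m)\subseteq S_{\ell_1}$. Pick $x\in B_{c_0}$ supported on $\{1,\dots,N\}$ with $f(x)>1-\eta$ and with the $\ell_1$-mass of $f$ beyond $N$ less than $\eta$. For large $m$ the mass of $g_m$ on $\{1,\dots,N\}$ is less than $\eta$. So there is $y_m\in B_{c_0}$ supported beyond $N$ with $g_m(y_m)>1-2\eta$, $|f(y_m)|<\eta$ and $|g_m(x)|<\eta$. Since $\|x+y_m\|_\infty\le1$ and $P_m(x+y_m)=f(x+y_m)^{n-2}\bigl(f(x+y_m)^2+c\,g_m(x+y_m)^2\bigr)$, letting $\eta\to0$ gives $\liminf_m\|P_m\|\ge1+c$. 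The trivial bound is $\|P_m\|\le1+c$, so $\|P_m\|\to1+c$ and $P_m/\|P_m\|\xrightarrow{w^*}f^n/(1+c)$, which has norm $1/(1+c)<1$. This is no counterexample.

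So the perturbation $c\,f^{n-2}g_m^2$ is itself the wrong choice. It needs joint control of $f$ and the $w^*$-null functionals $g_m$, uniformly in $m$, and the Josefson--Nissenzweig theorem gives nothing of that kind.

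\textbf{The missing idea} is to swap the roles of the two factors, which is how the paper proceeds. Let the $w^*$-null functional carry the exponent $n-2$, and square a \emph{fixed} companion of $f$.
\begin{itemize}
\item[$\cdot$] Take $f,g$ from Lemma~\ref{lemma: pol complex}, so $|f(x)|^2+|g(x)|^2\le\|x\|^2$ with $g$ independent of $m$. Take any Josefson--Nissenzweig sequence $(g_m)\subseteq S_{X^*}$, and set $P_m=f^n+g^2g_m^{n-2}$.
\item[$\cdot$] On $B_X$ you only need the trivial bounds $|g_m(x)|^{n-2}\le1$ and $|f(x)|^n\le|f(x)|^2$ to get $|P_m(x)|\le|f(x)|^2+|g(x)|^2\le1$.
\item[$\cdot$] The hypothesis $n\ge3$ enters exactly to make the exponent $n-2\ge1$. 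Then $P_m\to f^n$ pointwise, hence weak-star by Lemma~\ref{lem:wstar}.
\item[$\cdot$] The estimate of \cite{BST} gives $\|P_m-f^n\|=\|g^2g_m^{n-2}\|\ge\|g\|^2/n^n$.
\item[$\cdot$] $\|P_m\|=1$ follows by evaluating along $x_j\in S_X$ with $|f(x_j)|\to1$, which forces $g(x_j)\to0$.
\end{itemize}
Your instinct that the extra degree should absorb a factor is right. It has to be spent on the $w^*$-null factor, which then needs no joint control with $f$ at all.
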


\begin{proof} Consider $f,g \in X^*$ as in \hyperref[lemma: pol complex]{Lemma \ref*{lemma: pol complex}}. By the Josefson–Nissenzweig theorem there is $(g_m) \subseteq S_{X^*}$ $w^*$-convergent to $0$. Let $n \geq 3$ be fixed. Define 
\begin{equation*}
    P(x):= f(x)^n \ \ \ \mbox{and} \ \ \ P_m(x):= f(x)^n + g(x)^2 g_m(x)^{n-2}, \forall m \in \N 
\end{equation*}
for every $x \in X$. For every $x \in B_X$, we have that 
\begin{eqnarray*}
|P_m(x)| &\leq& |f(x)|^n + |g(x)|^2|g_m(x)|^{n-2} \\
&\stackrel{(n\geq3)}{\leq}& |f(x)|^2 + |g(x)|^2 \\
&\stackrel{(\eqref{lemma: pol complex1})}{\leq}& \|x\|^2 \leq 1.
\end{eqnarray*}
So, $\|P_m\| \leq 1$ for every $m \in \N$. Now, since $\|f\| = 1$, there exists a sequence
$(x_j)\subseteq S_X$ such that $|f(x_j)| \rightarrow 1$ as $j \rightarrow \infty$. Then, by \eqref{lemma: pol complex1}, we must have that $g(x_j) \rightarrow 0$ as $j \rightarrow \infty$. So, for a fixed $m$, we obtain
\begin{equation*}
    |P_m(x_j)| \geq |f(x_j)|^n - |g(x_j)|^2 \rightarrow 1 
\end{equation*}
as $j \rightarrow \infty$. So, $\|P_m\| = 1$ for every $m \in \N$. Clearly, we have that $\|P\|=1$.

Now, for every $x \in X$, $g_m(x) \rightarrow 0$ as $m \rightarrow \infty$ and since $n-2 \geq 1$, we get that $P_m(x) \rightarrow P(x)$ for every $x \in X$ as $m \rightarrow \infty$. Since $(P_m)$ is bounded, by \hyperref[lem:wstar]{Lemma \ref*{lem:wstar}}, we get that $P_m \xrightarrow{w^*} P$.

It remains to separate $P_m$ uniformly from $P$. By the results in \cite{BST} we have

    $$\|P_m - P\| = \|g^2 g_m^{n-2}\| \geq \frac{\|g\|^2 \|g_m\|^{n-2}}{n^n} = \frac{\|g\|^2}{n^n}.$$
\end{proof}

\hyperref[prop:pol real]{Theorems \ref*{prop:pol real}} and \ref{prop:pol complex} explain why, in the previous sections, we restricted our attention to the Banach space $\mathcal{P}(^2X;\mathbb{C})$, since the $w^*$-KK property already fails for every degree $n\ge2$ over the real field and for every degree $n\ge3$ over the complex field.

Before addressing the multilinear case, we first record a simple stability property showing that any weak-star Kadec--Klee type property of this polynomial space is necessarily inherited by the dual space $X^*$.

\begin{proposition} \label{prop:stability}
Let $X$ be a complex Banach space. Then the following assertions hold.
\begin{enumerate}[\rm (a)]
\item If $\mathcal{P}(^2X;\mathbb{C})$ has the (respectively, sequential) $w^*$-UKK, then $X^*$ has the (respectively, sequential) $w^*$-UKK.
\item If $\mathcal{P}(^2X;\mathbb{C})$ has the (respectively, sequential) $w^*$-KK, then $X^*$ has the (respectively, sequential) $w^*$-KK.
\end{enumerate}
\end{proposition}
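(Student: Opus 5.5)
The plan is to embed $X^*$ isometrically and weak-star-homeomorphically (on bounded sets) into $\mathcal{P}(^2X;\C)$ through a fixed norm-one functional. Fix $\varphi\in S_{X^*}$ and $x_0\in S_X$ with $\varphi(x_0)$ close to $1$; by Bishop--Phelps we may even take $\varphi$ norm-attaining, but this is not needed. The map $x^*\mapsto \varphi\cdot x^*$ is not isometric in general. A cleaner route is to reuse the construction of Lemma~\ref{lemma: pol complex}. Instead, I would use the map
\[
J:X^*\to \mathcal{P}(^2X;\C),\qquad J(x^*)(x)=x^*(x)^2 ,
\]
which is only homogeneous of degree two, together with the following elementary identity. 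For $x^*,y^*\in X^*$, the polynomial $J(x^*)-J(y^*)=(x^*-y^*)(x^*+y^*)$ is a product of two functionals. By the results in \cite{BST} (as used in the proofs of Theorems~\ref{prop:pol real} and \ref{prop:pol complex}), its norm is at least $\|x^*-y^*\|\,\|x^*+y^*\|/4$. Of course $\|J(x^*)\|=\|x^*\|^2$.

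For part (b), the argument runs as follows. Let $(x_\alpha^*)\subseteq S_{X^*}$ be a net (or sequence) with $x_\alpha^*\xrightarrow{w^*}x^*\in S_{X^*}$. Then $J(x_\alpha^*)(x)=x_\alpha^*(x)^2\to x^*(x)^2$ for every $x$, and the net is bounded. Lemma~\ref{lem:wstar} then gives $J(x_\alpha^*)\xrightarrow{w^*}J(x^*)$, and all these polynomials lie on the unit sphere. The $w^*$-KK of $\mathcal{P}(^2X;\C)$ yields $\|(x_\alpha^*-x^*)(x_\alpha^*+x^*)\|\to0$, and hence $\|x_\alpha^*-x^*\|\,\|x_\alpha^*+x^*\|\to 0$.

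The obstacle is the factor $\|x_\alpha^*+x^*\|$, since $J$ cannot distinguish $x^*$ from $-x^*$. Weak-star lower semicontinuity of the norm handles it: $\liminf\|x_\alpha^*+x^*\|\ge\|2x^*\|=2$. So eventually $\|x_\alpha^*+x^*\|\ge1$, and therefore $\|x_\alpha^*-x^*\|\to0$. The sequential version is identical.

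For part (a), I would use the neighborhood characterization of Proposition~\ref{obs:wstar-ukk}. Given $\eps>0$, take $\eta'$ for $\mathcal{P}(^2X;\C)$ and diameter $\eps/4$. Set $\eta$ so that $(1-\eta)^2>1-\eta'$, and also $\eta<1/4$. Let $x^*\in B_{X^*}$ with $\|x^*\|>1-\eta$. Then $\|J(x^*)\|>1-\eta'$, so there is a weak-star neighborhood $U$ of $J(x^*)$ with $\diam(U\cap B_{\mathcal{P}})<\eps/4$. Since $J$ is weak-star continuous on bounded sets into the pointwise topology, and since by Lemma~\ref{lem:wstar} that topology agrees with the weak-star topology on bounded sets, there is a weak-star neighborhood $V$ of $x^*$ with $J(V\cap B_{X^*})\subseteq U$. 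Shrinking $V$, we may also assume that every $y^*\in V\cap B_{X^*}$ satisfies $|y^*(x_1)-x^*(x_1)|$ small for a fixed $x_1\in B_X$ with $|x^*(x_1)|>1-\eta$. This forces $\|y^*+z^*\|\ge|y^*(x_1)+z^*(x_1)|\ge1$ for $y^*,z^*\in V\cap B_{X^*}$. Then $\|y^*-z^*\|\le 4\|J(y^*)-J(z^*)\|\le 4\cdot\eps/4=\eps$; taking $\eps/8$ in place of $\eps/4$ makes this strict. So $\diam(V\cap B_{X^*})<\eps$.

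The sequential version of (a) follows similarly. A separated weak-star convergent sequence in $B_{X^*}$ with limit of norm $>1-\eta$ maps under $J$ to a weak-star convergent sequence in $B_{\mathcal{P}}$. Its limit has norm $>1-\eta'$, and its separation is at least $\eps/4$, because eventually all terms have large $|\cdot(x_1)|$, which gives the lower bound on $\|y^*+z^*\|$. This contradicts the sequential $w^*$-UKK. The only delicate step is the one already noted, namely controlling $\|y^*+z^*\|$ from below. Evaluation at a single point $x_1$ where the limit is large handles it.
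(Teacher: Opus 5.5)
Your proof is correct. You use the same map $x^*\mapsto (x^*)^2$ as the paper, and the same device of evaluating at a point $x_1$ (the paper's $y$) where the limit functional nearly attains its norm. The difference is in how you turn separation of functionals into separation of polynomials.

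\begin{itemize}
\item The paper passes to the symmetric bilinear forms. It writes $(\check P_\alpha-\check P_\beta)(x,y)=(f_\alpha(x)-f_\beta(x))f_\alpha(y)+f_\beta(x)(f_\alpha(y)-f_\beta(y))$ and takes the supremum in $x$ to get $\|\check P_\alpha-\check P_\beta\|\geq \eps/2$. It then needs only the elementary polarization bound $\|P\|\geq\frac12\|\check P\|$.
\item You factor $J(y^*)-J(z^*)=(y^*-z^*)(y^*+z^*)$ and invoke the Ben\'itez--Sarantopoulos--Tonge inequality $\|fg\|\geq\frac14\|f\|\,\|g\|$, which is valid since the scalars are complex. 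The only remaining task is a lower bound on $\|y^*+z^*\|$, which isolates exactly where the ambiguity $J(x^*)=J(-x^*)$ enters.
\end{itemize}

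Both routes end with separation $\eps/4$. The paper's route is more self-contained, since it needs no product inequality beyond polarization. Yours is shorter and more transparent. It also gives part (b) directly via weak-star lower semicontinuity of the norm, a case the paper only calls ``entirely analogous''.

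For the net case of (a), working with the neighborhood characterization of Proposition~\ref{obs:wstar-ukk} instead of nets is equally valid. You correctly justify via Lemma~\ref{lem:wstar} that $J$ is weak-star continuous from $B_{X^*}$ into $B_{\mathcal{P}(^2X;\C)}$.

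The opening sentences about $\varphi\cdot x^*$ and an ``isometric embedding'' are a false start and should be deleted. $J$ is neither linear nor isometric, and nothing later relies on those sentences.
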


\begin{proof} We only prove the net part of item (a), since the remaining
cases are entirely analogous. Assume that $\mathcal{P}(^2 X; \C)$ has the $w^*$-UKK. Let $\eps \in (0,2]$ be given and let $(f_{\alpha}) \subseteq B_{X^*}$ be such that $f_{\alpha} \xrightarrow{w^*} f$ and $\sep(f_{\alpha}) \geq \eps$. We must obtain a bound $\|f\| \leq 1 - \delta$, where $\delta > 0$ depends only on $\eps$. Suppose that we have 
\begin{equation*}
    \|f\| > 1 - \frac{\eps}{16}
\end{equation*}
as otherwise we would have the required conclusion already. Let us choose $y \in S_X$ such that, after multiplying it by a modulus one complex scalar if necessary, $f(y) > \|f\| - \frac{\eps}{32}$. This means that $f(y) > 1 - \frac{3 \eps}{32}$. Now, as $f_{\alpha}(y) \rightarrow f(y)$, there exists $\alpha_0$ such that, whenever $\alpha \geq \alpha_0$, we have that 
\begin{equation*}
    |f_{\alpha}(y) - f(y)| < \frac{\eps}{32}
\end{equation*}
which implies that $|f_{\alpha}(y)| > 1 - \frac{\eps}{8}$ for every $\alpha \geq \alpha_0$. At the same time, also for $\alpha, \beta \geq \alpha_0$, we have that $|f_{\alpha}(y) - f_{\beta}(y)| < \frac{\eps}{16}$.

Now, for every $\alpha$, we define the 2-homogeneous polynomial $P_{\alpha}: X \rightarrow \C$ by $P_{\alpha}(x):= f_{\alpha}(x)^2$ for every $x \in X$. Take $\alpha < \beta$ to be such that $\alpha, \beta \geq \alpha_0$. Their associated bilinear forms satisfy
\begin{eqnarray*}
(\check{P}_{\alpha} - \check{P}_{\beta})(x,y) &=& f_{\alpha}(x) f_{\alpha}(y) - f_{\beta}(x)f_{\beta}(y) \\
&=& (f_{\alpha}(x) - f_{\beta}(x)) f_{\alpha}(y) + f_{\beta}(x) (f_{\alpha}(y) - f_{\beta}(y))
\end{eqnarray*}
for every $x,y \in X$. Taking the supremum over $x \in B_X$ yields 
\begin{equation*}
    \| \check{P}_{\alpha} - \check{P}_{\beta}\| \geq \|f_{\alpha} - f_{\beta}\| |f_{\alpha}(y)| - |f_{\alpha}(y) - f_{\beta}(y)| \geq \eps \left(1 - \frac{\eps}{8} \right) - \frac{\eps}{16} \geq \frac{\eps}{2}.
\end{equation*}
as $\sep(f_{\alpha}) \geq \eps$ and $0 < \eps \leq 2 $. The polarization formula (see \cite[Section 2.1]{floret1997natural}) then gives 
\begin{equation*}
    \|P_{\alpha} - P_{\beta}\| \geq \frac{1}{2} \|\check{P}_{\alpha} - \check{P}_{\beta}\| \geq \frac{\eps}{4}.
\end{equation*}
Moreover, we have that $P_{\alpha} \xrightarrow{w^*}P$ with $P(x):= f(x)^2$ for every $x \in X$ and $\|P\| = \|f\|^2$. Since $\mathcal{P}(^2 X; \C)$ has the $w^*$-UKK, there exists a positive number $\delta'>0$ depending only on $\eps$ such that $\|f\|^2 = \|P\| \leq 1 - \delta'$. This means that $\|f\| \leq \sqrt{1-\delta'}$. Thus, we have the desired conclusion with 
$$\delta=\min \left\{\frac{\eps}{16} , 1-\sqrt{1-\delta'}\right\}.$$
\end{proof}

The following remark is relevant to our discussion, as it shows that determining whether $\mathcal{P}(^2X;\mathbb{C})$ has a weak-star Kadec-Klee property is a nontrivial problem. Indeed, in general, this cannot be reduced to the corresponding property in $X^*$.

\begin{remark} Neither converse of \hyperref[prop:stability]{Proposition \ref*{prop:stability}} above is true in general. Indeed, $\mathcal{P}(^2 \ell_2; \C)$ fails the sequential $w^*$-KK. See \hyperref[prop:lp-small-p]{Example \ref*{prop:lp-small-p}} or \hyperref[ex:Lorentz-fails-1-p-2]{Example \ref*{ex:Lorentz-fails-1-p-2}}.    
\end{remark}

\subsection{(Symmetric) multilinear forms}

We now turn to multilinear forms. As in the polynomial setting, our first goal is to determine in which degrees weak-star Kadec--Klee properties may possibly hold. We shall prove that, for both multilinear and symmetric multilinear forms, these properties fail in every degree $n\ge3$. Consequently, only the bilinear case remains of interest.

Unlike the polynomial setting, however, the real and complex cases exhibit different behavior. Indeed, Dilworth and Kutzarova \cite{DK} proved that $\mathcal{L}(^2\ell_p(\mathbb R)\times\ell_q(\mathbb R);\mathbb R)$ has the $w^*$-UKK whenever $2<p,q<\infty$, showing that bilinear forms behave differently from $2$-homogeneous polynomials.

Our first result is the following.

\begin{theorem} \label{theorem:multinear-symmetric-fails-n-bigger-3} Let $X$ be an infinite-dimensional Banach space over $\K$ and let $n\geq 3$. Then, $\mathcal{L}_s(^n X; \K)$ fails the sequential $w^*$-KK.
\end{theorem}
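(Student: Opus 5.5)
The plan is to imitate the construction of Theorem~\ref{prop:pol complex}, now at the level of symmetric $n$-linear forms. The extra difficulty is the norm bound: a symmetric multilinear form can have larger norm than its associated polynomial. I will handle it by averaging over pairs of variables together with a Cauchy--Schwarz estimate.

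\emph{Construction.} Take $f,g\in X^*$ as in Lemma~\ref{lemma: pol complex}, so $\|f\|=1$, $g\neq0$ and $|f(x)|^2+|g(x)|^2\leq\|x\|^2$. Nothing in that lemma is specific to $\C$; it works verbatim for $\K=\R$. By the Josefson--Nissenzweig theorem, choose $(g_m)\subseteq S_{X^*}$ with $g_m\xrightarrow{w^*}0$. Define
\[
L(x_1,\ldots,x_n)=\prod_{i=1}^n f(x_i),\qquad L_m=L+S_m,
\]
where $S_m$ is the symmetrization of $g\otimes g\otimes g_m\otimes\cdots\otimes g_m$ (with $n-2$ factors $g_m$). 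Concretely, $S_m(x_1,\ldots,x_n)$ is the average over $\sigma\in\Sigma_n$ of $g(x_{\sigma(1)})g(x_{\sigma(2)})\prod_{i\geq3}g_m(x_{\sigma(i)})$. Both $L$ and $L_m$ are symmetric.

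\emph{Norm estimate (the main step).} Fix $x_1,\ldots,x_n\in B_X$ and write $a_i=|f(x_i)|$ and $b_i=|g(x_i)|$, so that $a_i^2+b_i^2\leq1$. Since all $|f(x_i)|,|g_m(x_i)|\leq1$, two bounds hold. First, $\prod_i a_i\leq a_ia_j$ for every pair $i\neq j$, hence $\prod_i a_i$ is at most the average of $a_ia_j$ over unordered pairs. Second, each term of $S_m$ is bounded by $b_{\sigma(1)}b_{\sigma(2)}$. Averaging over $\sigma$, every unordered pair $\{i,j\}$ appears with the same weight, so $|S_m|$ is at most the average of $b_ib_j$ over pairs. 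Consequently
\[
|L_m(x_1,\ldots,x_n)|\leq \operatorname{avg}_{\{i,j\}}\bigl(a_ia_j+b_ib_j\bigr)\leq \operatorname{avg}_{\{i,j\}}(a_i^2+b_i^2)^{1/2}(a_j^2+b_j^2)^{1/2}\leq1 .
\]
Hence $\|L_m\|\leq1$, and similarly $\|L\|\leq1$. For the reverse inequality, take $(y_j)\subseteq S_X$ with $|f(y_j)|\to1$. The lemma then forces $g(y_j)\to0$, so $|L_m(y_j,\ldots,y_j)|\geq|f(y_j)|^n-|g(y_j)|^2\to1$. Thus $\|L_m\|=\|L\|=1$. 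I expect this norm estimate to be the only delicate point, and the pair-averaging is exactly what makes it work.

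\emph{Convergence and separation.} Since $n\geq3$, every term of $S_m$ contains at least one factor $g_m(x_i)$. As $g_m\to0$ pointwise, $L_m\to L$ pointwise, and Lemma~\ref{lem:wstar} gives $L_m\xrightarrow{w^*}L$ in $\mathcal{L}_s(^nX;\K)$. Finally, $\|L_m-L\|=\|S_m\|$, and $S_m$ dominates in norm its associated polynomial $x\mapsto g(x)^2g_m(x)^{n-2}$. By the results of \cite{BST}, as used in the proof of Theorem~\ref{prop:pol complex},
\[
\|S_m\|\geq\|g^2g_m^{n-2}\|\geq\frac{\|g\|^2}{n^n}>0
\]
for all $m$. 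So $(L_m)$ converges weak-star but not in norm, and $\mathcal{L}_s(^nX;\K)$ fails the sequential $w^*$-KK.
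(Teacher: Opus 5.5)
Your proof is correct and is essentially the paper's argument. Your symmetrization $S_m$ of $g\otimes g\otimes g_m\otimes\cdots\otimes g_m$ equals the paper's average $\binom{n}{2}^{-1}\sum_{i<j}g(x_i)g(x_j)\prod_{k\neq i,j}h_m(x_k)$, since each unordered pair carries weight $2(n-2)!/n!$; the remaining steps (the norm bound by pair-averaging and Cauchy--Schwarz, the lower bound along $(y_j)$, pointwise convergence for $n\geq 3$, and separation via \cite{BST}) match the paper step by step.
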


\begin{proof} Let $n \geq 3$. Choose $f, g \in X^*$ as in \hyperref[lemma: pol complex]{Lemma \ref*{lemma: pol complex}}. By the Josefson–Nissenzweig theorem there exists $(h_m) \subseteq S_{X^*}$ such that $h_m \xrightarrow{w^*} 0$. For every $m \in \N$, define 
\begin{equation*}
A_m(x_1,\ldots,x_n)= \prod_{k=1}^{n} f(x_k) + \frac{1}{\binom{n}{2}} \sum_{1\leq i<j\leq n} g(x_i)g(x_j) \prod_{\substack{1\leq k\leq n\\ k\neq i,j}} h_m(x_k)
\end{equation*}
for every $x_1, \ldots, x_n \in X$. Notice that $\binom{n}{2}$ is exactly the number of pairs $(i,j)$ satisfying $1 \leq i < j \leq n$. Also, for $x_1, \ldots, x_n \in X$, define 
\begin{equation*}
    A(x_1, \ldots, x_n):= \prod_{k=1}^n f(x_k).
\end{equation*}
These are symmetric $n$-linear forms on $X^n$. Notice now that, for every $x_1, \ldots, x_n \in B_X$ and for every pair $i < j$, by using (\ref{lemma: pol complex1}) and the Cauchy-Schwarz inequality, we have that
\begin{eqnarray*}
    \left| \prod_{k=1}^n f(x_k) + g(x_i)g(x_j) \prod_{\substack{1\leq k\leq n\\ k\neq i,j}} h_m(x_k) \right| &\leq& |f(x_i)f(x_j)| + |g(x_i)g(x_j)| \\
    &\leq& (|f(x_i)|^2 + |g(x_i)|^2)^{1/2}(|f(x_j)|^2+|g(x_j)|^2)^{1/2} \\
    &\leq& 1.
\end{eqnarray*}
Since, for every $m \in \N$, $A_m(x_1, \ldots, x_n)$ is the average of these expressions over all pairs $i<j$, it follows that $\|A_m\| \leq 1$ for every $m \in \N$. Now, let us choose $(u_j) \subseteq B_X$ to be such that $|f(u_j)| \rightarrow 1$ as $j \rightarrow \infty$. Once again by (\ref{lemma: pol complex1}), we must have that $g(u_j) \rightarrow 0$ as $j \rightarrow \infty$. For each fixed $m \in \N$,
\begin{eqnarray*}
|A_m(u_j, \ldots, u_j)| &=& |f(u_j)^n + g(u_j)^2 h_m(u_j)^{n-2}| \\
&\geq& |f(u_j)|^n - |g(u_j)|^2 \rightarrow 1
\end{eqnarray*}
as $j \rightarrow \infty$. This means that $\|A_m\| = 1$ for every $m \in \N$. Clearly, we have that $\|A\| = 1$. Now, for every fixed $x_1, \ldots, x_n \in X$, we have that 
\begin{equation*}
    (A_m - A)(x_1, \ldots, x_n) = \frac{1}{\binom{n}{2}} \sum_{1\leq i<j\leq n} g(x_i)g(x_j) \prod_{\substack{1\leq k\leq n\\ k\neq i,j}} h_m(x_k).
\end{equation*}
As $n \geq 3$, every summand contains at least one factor $h_m(x_k)$, this means that $A_m(x_1, \ldots, x_n) \rightarrow A(x_1, \ldots, x_n)$ as $m \rightarrow \infty$. As the sequence $(A_m)$ is bounded, we have that $A_m \xrightarrow{w^*} A$ in $\mathcal{L}_s(^n X; \K)$.

Finally, since 
\begin{equation*}
    \|A_m - A\| \geq \sup_{\|x\|\leq 1} |g(x)|^2|h_m(x)|^{n-2},
\end{equation*}
we have that $ \|A_m - A\|$ is greater than or equal to the norm of the polynomial $g^2h_m^{n-2}$ which, by the results in \cite{BST} is at least 
$$\frac{\|g\|^2\|h_m\|^{n-2}}{n^n}=\frac{\|g\|^2}{n^n}>0.$$ In particular $A_m$ does not converge to $A$ in norm.
\end{proof}

As a consequence of \hyperref[prop:multi implies simetric]{Fact \ref*{prop:multi implies simetric}} and \hyperref[theorem:multinear-symmetric-fails-n-bigger-3]{Theorem \ref*{theorem:multinear-symmetric-fails-n-bigger-3}}, we immediately obtain a negative result for $n$-linear forms on an infinite-dimensional Banach space $X$. However, with a little additional work, we can prove a stronger and more general result.

\begin{theorem}\label{theorem:multilinear-fails-n-bigger-3}
Let $X_1,\ldots,X_n$ be Banach spaces over $\K$ of dimension at least $2$, where $n\ge3$. If at least one of the spaces $X_1,\ldots,X_n$ is infinite-dimensional, then
\[
\mathcal{L}(^nX_1\times\cdots\times X_n;\K)
\]
fails the $w^*$-KK.
\end{theorem}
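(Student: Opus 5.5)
We adapt the construction of Theorem~\ref{theorem:multinear-symmetric-fails-n-bigger-3} to the non-symmetric setting. Without loss of generality (permuting variables is an isometric, weak-star homeomorphic identification) assume $X_1$ is infinite-dimensional. By Lemma~\ref{lemma: pol complex}, applied to the two-dimensional-or-larger spaces $X_2$ and $X_3$, choose $f_2,g_2\in X_2^*$ and $f_3,g_3\in X_3^*$ with $\|f_j\|=1$, $g_j\neq0$ and $|f_j(x)|^2+|g_j(x)|^2\le\|x\|^2$. For $k\ge4$ pick any $f_k\in S_{X_k^*}$. Choose $f_1\in S_{X_1^*}$, and by the Josefson--Nissenzweig theorem a sequence $(h_m)\subseteq S_{X_1^*}$ with $h_m\xrightarrow{w^*}0$.

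Define
\[
A(x_1,\ldots,x_n)=\prod_{k=1}^n f_k(x_k),
\]
\[
A_m(x_1,\ldots,x_n)=A(x_1,\ldots,x_n)+h_m(x_1)\,g_2(x_2)\,g_3(x_3)\prod_{k=4}^n f_k(x_k).
\]
The norm bound follows from Cauchy--Schwarz in the variables $x_2,x_3$. For $x_k\in B_{X_k}$,
\[
|A_m|\le \big(|f_1(x_1)|\,|f_2(x_2)f_3(x_3)|+|h_m(x_1)|\,|g_2(x_2)g_3(x_3)|\big)\prod_{k\ge4}|f_k(x_k)|.
\]
This is at most
\[
|f_2(x_2)f_3(x_3)|+|g_2(x_2)g_3(x_3)|\le (|f_2|^2+|g_2|^2)^{1/2}(|f_3|^2+|g_3|^2)^{1/2}\le1,
\]
so $\|A_m\|\le1$.

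For the lower bound, take norming sequences $u_j^{(k)}$ for $f_k$. Then $g_2(u^{(2)}_j),g_3(u^{(3)}_j)\to0$, so $|A_m(u^{(1)}_j,\ldots,u^{(n)}_j)|\to1$ and $\|A_m\|=\|A\|=1$.

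Pointwise convergence is immediate: $(A_m-A)(x_1,\ldots,x_n)=h_m(x_1)g_2(x_2)g_3(x_3)\prod_{k\ge4}f_k(x_k)\to0$. Since $(A_m)$ is bounded, Lemma~\ref{lem:wstar} gives $A_m\xrightarrow{w^*}A$. Moreover, $A_m-A$ is an elementary product of functionals, so its norm equals exactly the product of their norms:
\[
\|A_m-A\|=\|h_m\|\,\|g_2\|\,\|g_3\|\prod_{k\ge4}\|f_k\|=\|g_2\|\,\|g_3\|>0.
\]
This is uniform in $m$, so $(A_m)$ is a sequence on the sphere converging weak-star but not in norm, and the (sequential, hence also net) $w^*$-KK fails.

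The only delicate point is that Lemma~\ref{lemma: pol complex} needs $\dim X_j\ge2$, which the hypothesis provides, and that the infinite-dimensional space carries the weak-star null factor. If the infinite-dimensional space happens to be one of those used for $g_j$, we simply relabel so that the Josefson--Nissenzweig factor sits in an infinite-dimensional slot and the Cauchy--Schwarz pair uses two other slots. This is possible because $n\ge3$ gives at least two remaining variables. This use of three distinct slots, and not any estimate, is where $n\ge3$ enters.
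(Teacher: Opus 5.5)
Your proof is correct and is essentially the paper's own argument: the same perturbation of $A=\prod_k f_k(x_k)$ by a product term carrying a Josefson--Nissenzweig factor in the infinite-dimensional slot, Cauchy--Schwarz via Lemma~\ref{lemma: pol complex} in two other slots, and the exact norm $\|A_m-A\|=\|g_2\|\,\|g_3\|$ of an elementary product. The only difference is labeling: the paper puts the infinite-dimensional space in slot $n$ and the Cauchy--Schwarz pair in slots $1,2$, and your closing paragraph about relabeling is redundant after your initial normalization, though harmless.
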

\begin{proof}
Without loss of generality, assume that $X_n$ is infinite-dimensional.

For $i=1,2$, choose $f_{i},g_i\in X_{i}^*$ as in \hyperref[lemma: pol complex]{Lemma~\ref*{lemma: pol complex}}. For $i=3,\ldots n$ take any  $f_{i}\in S_{X^*_i}$. By the Josefson–Nissenzweig theorem there is $(h_m)\subseteq S_{X_n^*}$ , so that
\(
h_m\xrightarrow{w^*}0
\).

Define
\[
A(x_1,\ldots,x_n)
=\prod_{i=1}^{n}f_i(x_i),
\]
and, for every $m\in\N$,
\[
A_m(x_1,\ldots,x_n)
=
A(x_1,\ldots,x_n)
+
\Bigl(\prod_{i=3}^{n-1}f_i(x_i)\Bigr)
g_{1}(x_{1})g_{2}(x_{2})h_m(x_n).
\]

Clearly $A$ and $A_m$ are $n$-linear forms. Moreover, for every
$x_1,\ldots,x_n\in B_{X_j}$,
\begin{align*}
|A_m(x_1,\ldots,x_n)|
&\leq
\bigl|f_{1}(x_{1})f_{2}(x_{2})\bigr| +
\bigl|g_{1}(x_{1})g_{2}(x_{2})\bigr| \\
&\leq
\Bigl(
    |f_{1}(x_{1})|^2
    +|g_{1}(x_{1})|^2
\Bigr)^{1/2} 
\Bigl(
    |f_{2}(x_{2})|^2
    +|g_{2}(x_{2})|^2
\Bigr)^{1/2} \\
&\leq 1.
\end{align*}
where we used \hyperref[lemma: pol complex]{Lemma~\ref*{lemma: pol complex}} and the Cauchy--Schwarz inequality. Hence $\|A_m\|\le1$ for every $m$. Now, for each $i=1,\ldots,n$, let us choose $(u_j^i) \subseteq B_{X_i}$ to be such that $|f_i(u_j^i)| \rightarrow 1$ as $j \rightarrow \infty$. By (\ref{lemma: pol complex1}), we must have that $g_i(u_j^{i}) \rightarrow 0$ as $j \rightarrow \infty$ for $i=1,2$. For each fixed $m \in \N$,
\begin{eqnarray*}
|A_m(u_j^1, \ldots, u_j^n)| &\geq& \prod_{i=1}^n|f_i(u_j^i)| - |g_{1}(u_j^{1})g_{2}(u_j^{2})| \rightarrow 1
\end{eqnarray*}
as $j \rightarrow \infty$. This means that $\|A_m\| = 1$ for every $m \in \N$. 

It is easy to see that $\|A\|=1$. Since $h_m\xrightarrow{w^*}0$, we have
$A_m(x_1,\ldots,x_n)\to A(x_1,\ldots,x_n)$ for every fixed
$(x_1,\ldots,x_n)$. As $(A_m)$ is bounded, \hyperref[lem:wstar]{Lemma~\ref*{lem:wstar}}
implies that $A_m\xrightarrow{w^*}A$.

Finally, since
\[
\|A_m-A\| = \left(\prod_{i=3}^{n-1}\|f_i\|\right)
\|g_{1}\| \|g_{2}\| \|h_m\|= 
\|g_{1}\| \|g_{2}\|,
\]
$(A_m)$ does not converge to $A$ in norm, and
$\mathcal L(^nX_1\times\cdots\times X_n;\K)$ fails the weak-star
Kadec--Klee property.
\end{proof}

\subsection{The vector-valued case}

One may ask whether weak-star Kadec-Klee properties can occur in the vector-valued spaces $\mathcal{P}(^nX;Z^*)$ or $\mathcal{L}_s(^nX;Z^*)$ when at least one of the Banach spaces is infinite-dimensional. Note that the case $\mathcal{L}(^nX;Z^*)=\mathcal{L}(^{n+1}X\times\cdots X\times Z;\K) $ was already covered in \hyperref[theorem:multilinear-fails-n-bigger-3]{Theorem \ref*{theorem:multilinear-fails-n-bigger-3}}. In this short subsection, we show that, apart from the natural one-dimensional exceptions, the Josefson--Nissenzweig theorem always yields negative results.

Since we are dealing with weak-star properties, it is convenient to recall the corresponding preduals in the vector-valued setting. Let $n \geq 2$. Let $X$ be a Banach space over $\K$ and let $Y = Z^*$ be a dual Banach space with a fixed predual $Z$. The standard projective and symmetric-projective tensor linearization give isometric identifications as follows 
\begin{equation*}
\mathcal{P}(^n X;Z^*) = \left( \left(\widetilde{\otimes}_{\pi_s}^{n,s} X\right) \pten Z \right)^* \ \ \ \mbox{and} \ \ \ \mathcal{L}_s(^n X; Z^*) = \left( \left( \widetilde{\otimes}_{\pi}^{n,s} X \right)\pten Z \right)^*.
\end{equation*}

As in \hyperref[lem:wstar]{Lemma \ref*{lem:wstar}}, on norm-bounded nets, weak-star convergence in $\mathcal{P}(^n X; Z^*)$ is equivalent to $\langle P_{\alpha}(x), z \rangle \rightarrow \langle P(x), z \rangle$ for every $x \in X$ and $z \in Z$. Similarly, bounded weak-star convergence in $\mathcal{L}_s(^nX; Z^*)$ is equivalent to $\langle A_{\alpha}(x_1, \ldots, x_n), z \rangle \rightarrow \langle A(x_1, \ldots, x_n), z \rangle$ for every $x_1, \ldots, x_n \in X$ and $z \in Z$.

We next show that, unless one of the spaces involved is one-dimensional, the same obstruction provided by the Josefson--Nissenzweig theorem already rules out weak-star Kadec--Klee properties in the vector-valued setting.

\begin{proposition} Let $X, Z$ be Banach spaces. Let $\dim X \geq 2$ and let $Z^*$ be infinite-dimensional. Then, for every $n \geq 2$, the space $\mathcal{P}(^n X; Z^*)$ fails the sequential $w^*$-KK.
\end{proposition}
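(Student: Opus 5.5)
We will build, as in Theorems \ref{prop:pol real} and \ref{prop:pol complex}, a weak-star convergent sequence of norm-one polynomials that stays uniformly separated in norm. This time the weak-star null direction is placed in the target space $Z^*$ rather than in $X^*$. By Lemma \ref{lemma: pol complex}, since $\dim X\geq 2$, we fix $f,g\in X^*$ with $\|f\|=1$, $g\neq 0$ and $|f(x)|^2+|g(x)|^2\leq\|x\|^2$. Since $Z^*$ is infinite-dimensional, $Z$ is infinite-dimensional too. We therefore apply Lemma \ref{JN-lemma} (Josefson--Nissenzweig) in the dual pair $(Z,Z^*)$. Fix $z_0^*\in S_{Z^*}$ and a norming vector $z_0\in S_Z$ with $\langle z_0^*,z_0\rangle$ close to $1$. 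We want a sequence $(w_m^*)\subseteq S_{Z^*}$ with $w_m^*\xrightarrow{w^*}0$ and $\|z_0^*+w_m^*\|$ controlled. The simplest choice is to take $w_m^*$ vanishing at $z_0$, via Lemma \ref{JN-lemma} applied to $Z$ with $x_0=z_0$. This gives $\langle w_m^*,z_0\rangle=0$.

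Next we define
\[
P(x)=f(x)^n z_0^*,\qquad P_m(x)=f(x)^n z_0^*+f(x)^{n-2}g(x)^2\, w_m^*.
\]
For each $x\in X$ and $z\in Z$ we have $\langle P_m(x),z\rangle\to\langle P(x),z\rangle$ because $w_m^*\xrightarrow{w^*}0$. The sequence is bounded, so by the vector-valued version of Lemma \ref{lem:wstar} recalled above, $P_m\xrightarrow{w^*}P$.

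For separation, $\|P_m-P\|=\|f^{n-2}g^2\|\,\|w_m^*\|=\|f^{n-2}g^2\|$. This is at least $\|g\|^2/n^n>0$ by \cite{BST}, a quantity independent of $m$.

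The main obstacle is normalization. We need $\|P_m\|$ and $\|P\|$ to equal $1$, or at least to be comparable so that rescaling preserves weak-star convergence and keeps the separation. We have $\|P\|=1$. For $x\in B_X$ the triangle inequality gives
\[
\|P_m(x)\|\leq |f(x)|^{n}+|f(x)|^{n-2}|g(x)|^2\leq |f(x)|^{n-2}\bigl(|f(x)|^2+|g(x)|^2\bigr)\leq 1 .
\]
For the lower bound, take $x_j\in S_X$ with $|f(x_j)|\to1$. Then $g(x_j)\to 0$ by \eqref{lemma: pol complex1}, and hence $\|P_m(x_j)\|\to 1$. So $\|P_m\|=1$ for all $m$, and the triangle inequality already suffices. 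The orthogonality $\langle w_m^*,z_0\rangle=0$ is therefore not even needed; any Josefson--Nissenzweig sequence works. The factor $f^{n-2}$ is harmless for $n\geq2$ (for $n=2$ it is absent), which is why, unlike the scalar complex case, degree $2$ is already covered. Concluding, $(P_m)$ is a sequence in $S_{\mathcal{P}(^nX;Z^*)}$ converging weak-star to $P\in S_{\mathcal{P}(^nX;Z^*)}$ but not in norm, so the sequential $w^*$-KK fails.
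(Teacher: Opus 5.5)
Your proposal is correct and is essentially the paper's proof: you use the same polynomials $P_m(x)=f(x)^n z_0^*+f(x)^{n-2}g(x)^2 w_m^*$, built from Lemma~\ref{lemma: pol complex} and a Josefson--Nissenzweig sequence in $Z^*$ (available because $Z$ is infinite-dimensional), and you get the same $m$-independent separation $\|P_m-P\|=\|f^{n-2}g^2\|>0$. Your triangle-inequality estimate supplies the details behind the paper's ``as before'' for $\|P_m\|=1$, and as you observe yourself, the detour through Lemma~\ref{JN-lemma} to get $\langle w_m^*,z_0\rangle=0$ is unnecessary.
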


\begin{proof} Fix $n \geq 2$. Choose $f, g \in X^*$ as in \hyperref[lemma: pol complex]{Lemma \ref*{lemma: pol complex}}. By the Josefson--Nissenzweig theorem, there is a sequence $(y_m) \subseteq S_{Z^*}$ such that $y_m \xrightarrow{w^*} 0$. Fix any $y_0 \in S_{Z^*}$ and define, for every $m \in \N$,
\begin{equation*}
    P_m(x) := f(x)^n y_0 + f(x)^{n-2} g(x)^2 y_m 
\end{equation*}
and $P(x) := f(x)^n y_0$ for every $x \in X$. As before, one can show that $\|P_m\| = \|P\| = 1$ for every $m \in \N$. Also, $P_m \xrightarrow{w^*} P$ but $\|P_m - P\| = \|f^{n-2} g^2\| > 0$ for every $m \in \N$.
\end{proof}

The previous proposition excludes the case in which the range dual space is infinite-dimensional. By interchanging the roles of the domain and range, one obtains the complementary statement.

\begin{proposition} Let $X, Z$ be Banach spaces. Suppose $X$ is infinite-dimensional and $\dim Z \geq 2$. Then, for every $n \geq 2$, the space $\mathcal{P}(^n X; Z^*)$ fails the sequential $w^*$-KK.
\end{proposition}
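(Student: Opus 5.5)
Here the roles are swapped relative to the previous proposition: the Josefson--Nissenzweig sequence lives in $X^*$ and the two-dimensionality of $Z$ supplies the orthogonal pair. Since $\dim Z\geq 2$, Lemma~\ref{lemma: pol complex} applied to $Z$ gives $\varphi,\psi\in Z^*$ with $\|\varphi\|=1$, $\psi\neq0$ and
\[
|\langle \varphi,z\rangle|^2+|\langle\psi,z\rangle|^2\leq\|z\|^2
\]
for every $z\in Z$. After dividing $\psi$ by $\|\psi\|$ if needed, we may assume $\|\psi\|\le 1$. Next, fix $f\in S_{X^*}$. Since $X$ is infinite-dimensional, the Josefson--Nissenzweig theorem (or Lemma~\ref{JN-lemma}) gives $(g_m)\subseteq S_{X^*}$ with $g_m\xrightarrow{w^*}0$. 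I would then define
\[
P(x):=f(x)^n\varphi,\qquad P_m(x):=f(x)^n\varphi+f(x)^{n-1}g_m(x)\,\psi .
\]
The mixed term uses only one factor of $g_m$, so that it vanishes pointwise for every $n\geq2$.

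First, the norms. For $x\in B_X$ and $z\in B_Z$, with $a=f(x)$ and $b=g_m(x)$, we have $|a|,|b|\le1$. Hence
\[
|\langle P_m(x),z\rangle|\le |a|^{n-1}\bigl(|a||\varphi(z)|+|b||\psi(z)|\bigr)\le (|a|^2+|b|^2)^{1/2}\bigl(|\varphi(z)|^2+|\psi(z)|^2\bigr)^{1/2}.
\]
This is where the estimate can fail: it only gives $\sqrt2$. To fix this I would instead put the weight on the range side and use
\[
P_m(x):=f(x)^{n-1}\bigl(f(x)\varphi+g_m(x)\psi\bigr)\cdot\tfrac1{\sqrt2}\ \text{-like normalization},
\]
or, more cleanly, scale $\psi$ by $1/2$ and $g_m$ appropriately. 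The approach I would try first is $P_m(x)=f(x)^{n-1}\bigl(f(x)\varphi+\tfrac12 g_m(x)\psi\bigr)$ paired with $P(x)=f(x)^n\varphi$, and then bound $\|P_m\|\le 1$ via
\[
|\langle P_m(x),z\rangle|\le |f(x)\varphi(z)|+\tfrac12|g_m(x)\psi(z)|.
\]
If that sum can exceed $1$, I would fall back to a convexity trick. The identity $|\langle P_m(x),z\rangle|\le\|x\|^n\|z\|$ should then follow by Cauchy--Schwarz once $|f(x)|^2+|g_m(x)|^2\le C$ is controlled. I expect this normalization to be the main obstacle.

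The alternative that avoids it is the roles-swapped analogue of the previous proposition. Take $f,g\in X^*$ from Lemma~\ref{lemma: pol complex} (valid since $\dim X\ge2$), so that $|f(x)|^2+|g(x)|^2\le\|x\|^2$. Take $\varphi,\psi$ from the lemma on $Z$. The missing ingredient is a weak-star null sequence, and for that I would use $g_m$ from Lemma~\ref{JN-lemma} in a degree-lowering role:
\[
P_m(x)=f(x)^{n-1}\bigl(f(x)\varphi+g(x)\psi\bigr)\quad\text{vs.}\quad f(x)^{n-1}\bigl(f(x)\varphi+g_m(x)\psi\bigr).
\]
Then the bound $|f\varphi(z)+g\psi(z)|\le(|f|^2+|g|^2)^{1/2}(|\varphi(z)|^2+|\psi(z)|^2)^{1/2}\le1$ holds by Cauchy--Schwarz. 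To keep this bound with $g_m$, I would choose $g_m$ via Lemma~\ref{JN-lemma} inside the annihilator structure so that $|f|^2+|g_m|^2\le\|\cdot\|^2$ up to a fixed constant, absorbed by scaling $\psi$.

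Once the norm bound holds, the rest is routine. Norm one follows by evaluating near a norming point of $f$ and $\varphi$. For weak-star convergence, $\langle P_m(x),z\rangle-\langle P(x),z\rangle=f(x)^{n-1}g_m(x)\psi(z)\to0$, and the bounded-net criterion recalled above for $\mathcal P(^nX;Z^*)$ then gives $P_m\xrightarrow{w^*}P$. For the separation, $\|P_m-P\|\ge c\,\|f^{n-1}g_m\|\,\|\psi\|\ge c\,\|\psi\|/n^n$ by \cite{BST}, which is bounded away from zero.
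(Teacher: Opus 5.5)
\textbf{The gap is the norm bound $\|P_m\|\le 1$.} You correctly flag it as the main obstacle but never establish it, and none of your proposed repairs can work.

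With a perturbation of the form $c\,f^{n-1}g_m\psi$, nothing forces $g_m$ to be small where $|f|$ is close to $1$, and rescaling $\psi$ by a constant cannot compensate. Take $X=c_0(\C)$, $Z=\ell_2^2(\C)$; there the lemma gives $\varphi\perp\psi$, so $\|a\varphi+b\psi\|^2=|a|^2+|b|^2\|\psi\|^2$. With $f=e_1^*$ and $g_m=e_m^*$ you get $\|P_m(e_1+e_m)\|^2=1+c^2\|\psi\|^2>1$ for every $c>0$.

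No other choice of $f\in S_{\ell_1}$ rescues this on $c_0$. Let $x_N$ be a finitely supported near-norming vector of $f$ and $y\in B_{c_0}$ be supported beyond $N$. Averaging $\|P_m(x_N+\lambda y)\|\le1$ over $\lambda\in\T$ gives $\sup_m\sum_{k>N}|g_m(k)|\to0$ as $N\to\infty$. Together with $g_m\xrightarrow{w^*}0$, this forces $\|g_m\|_1\to0$. So an inequality $|f|^2+|g_m|^2\le\|\cdot\|^2$ for a norm-one weak-star null sequence is not available in general; with a constant $C=2$ it holds trivially but is useless. Dividing by $\|P_m\|$ (a fixed number $>1$ here) only moves the weak-star limit off the unit sphere. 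Your ``alternative'' paragraph substitutes $g_m$ for $g$ and inherits the same defect. Moreover, the two polynomials displayed there do not form a pair with $P_m\xrightarrow{w^*}P$.

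\textbf{The missing idea} is to keep the functional $g$ from Lemma~\ref{lemma: pol complex} applied to $X$ as a factor of the perturbation, and let $g_m$ supply only the weak-star decay. Take $f,g\in X^*$ and $\varphi,\psi\in Z^*$ from that lemma, and $(g_m)\subseteq S_{X^*}$ weak-star null. Put $P(x)=f(x)^n\varphi$ and
\[
P_m(x)=f(x)^n\varphi+f(x)^{n-2}g(x)g_m(x)\,\psi ,
\]
which is where $n\geq2$ is used. For $x\in B_X$ and $z\in B_Z$, discard $|g_m(x)|\le1$ and $|f(x)|\le1$, then use Cauchy--Schwarz and the two lemma inequalities:
\[
|\langle P_m(x),z\rangle|\le |f(x)|^2|\varphi(z)|+|g(x)||\psi(z)|\le\bigl(|f(x)|^4+|g(x)|^2\bigr)^{1/2}\bigl(|\varphi(z)|^2+|\psi(z)|^2\bigr)^{1/2}\le1 .
\]
The rest goes as you describe:
\begin{itemize}
\item $\|P_m\|=1$ along $x_j$ with $|f(x_j)|\to1$, since then $g(x_j)\to0$.
\item $\langle P_m(x)-P(x),z\rangle=f(x)^{n-2}g(x)g_m(x)\psi(z)\to0$.
\item $\|P_m-P\|=\|\psi\|\,\|f^{n-2}g\,g_m\|\geq\|g\|\|\psi\|/n^n$ by \cite{BST}.
\end{itemize}
On $c_0$ with $Z=\ell_2^2$ this is $P_m(x)=\bigl(\tfrac14(x_1+x_2)^2,\tfrac12(x_1-x_2)x_m\bigr)$.

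The paper itself only says the statement follows by interchanging the roles of domain and range. The literal interchange of the previous construction, $f^n\varphi+f^{n-2}g_m^2\psi$, runs into the same obstruction: on $c_0$ it gives $(x_1^2,x_m^2)$, which has norm $\sqrt2$ in $\ell_2^2$. The extra factor $g$ is what makes the interchange work.
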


Analogously, one can prove these results for $\mathcal{L}_s(^nX; Z^*)$.
\begin{proposition} Let $X, Z$ be Banach spaces. Let $n \geq 2$. If $X$ is infinite-dimensional and $\dim Z \geq 2$, or $\dim X \geq 2$ and $Z$ is infinite-dimensional, then $\mathcal{L}_s(^n X; Z^*)$ fails the sequential $w^*$-KK.
\end{proposition}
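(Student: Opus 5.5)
We treat the two cases separately, using a Josefson--Nissenzweig perturbation as in the previous propositions. As there, bounded weak-star convergence in $\mathcal{L}_s(^nX;Z^*)$ reduces to the pointwise condition $\langle A_m(x_1,\ldots,x_n),z\rangle\to\langle A(x_1,\ldots,x_n),z\rangle$.

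\emph{Case 1: $\dim X\geq2$ and $Z$ infinite-dimensional.} Since $Z$ is infinite-dimensional, so is $Z^*$.
\begin{itemize}
\item[(i)] Choose $f,g\in X^*$ as in Lemma~\ref{lemma: pol complex}, a sequence $(y_m)\subseteq S_{Z^*}$ with $y_m\xrightarrow{w^*}0$ (Josefson--Nissenzweig), and any $y_0\in S_{Z^*}$.
\item[(ii)] Define $A(x_1,\ldots,x_n)=\prod_k f(x_k)\,y_0$ and
\[
A_m(x_1,\ldots,x_n)=A(x_1,\ldots,x_n)+\tbinom{n}{2}^{-1}\sum_{i<j}g(x_i)g(x_j)\prod_{k\neq i,j}f(x_k)\,y_m .
\]
\item[(iii)] For the bound $\|A_m\|\le1$, evaluate $\langle A_m(x_1,\ldots,x_n),z\rangle$ with $z\in B_Z$. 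Each pair term is then bounded by $|f(x_i)f(x_j)|+|g(x_i)g(x_j)|\le1$, by Cauchy--Schwarz together with \eqref{lemma: pol complex1}. Averaging over pairs gives $\|A_m\|\le1$.
\item[(iv)] Norm one is witnessed along $u_j$ with $|f(u_j)|\to1$, which forces $g(u_j)\to0$.
\item[(v)] Weak-star convergence $A_m\to A$ holds because $\langle y_m,z\rangle\to0$ for each $z\in Z$.
\item[(vi)] Separation: $\|A_m-A\|\ge \binom n2^{-1}\|f^{n-2}g^2\|>0$, where the polynomial $f^{n-2}g^2$ is nonzero and $\|y_m\|=1$.
\end{itemize}
For $n=2$ this is just $f\otimes f\,y_0+g\otimes g\,y_m$.

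\emph{Case 2: $X$ infinite-dimensional and $\dim Z\geq2$.} Here the infinite-dimensional direction moves into the domain.
\begin{itemize}
\item[(i)] Take $h_m\in S_{X^*}$ with $h_m\xrightarrow{w^*}0$, and $f\in S_{X^*}$. By Lemma~\ref{JN-lemma} we may, if convenient, also take $h_m$ vanishing on a chosen point.
\item[(ii)] Apply Lemma~\ref{lemma: pol complex} to $Z$ to obtain $\varphi,\psi\in Z^*$ with $\|\varphi\|=1$, $\psi\neq0$ and $|\varphi(z)|^2+|\psi(z)|^2\le\|z\|^2$.
\item[(iii)] Set $A=f^{\otimes n}\varphi$. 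Since $n\ge2$, we can symmetrize a term containing $h_m$ twice and $f$ elsewhere:
\[
A_m=A+\tbinom n2^{-1}\sum_{i<j}h_m(x_i)h_m(x_j)\prod_{k\ne i,j}f(x_k)\,\psi .
\]
\item[(iv)] The norm bound is the delicate point: we need $|f(x)\varphi(z)|+|h_m(x)\psi(z)|$-type estimates. Pairing $\varphi$ with $f$ and $\psi$ with $h_m$, Cauchy--Schwarz gives $\bigl(|f|^2+|h_m|^2\bigr)^{1/2}\bigl(|\varphi(z)|^2+|\psi(z)|^2\bigr)^{1/2}$, but $|f(x)|^2+|h_m(x)|^2$ need not be $\le1$. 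To repair this, scale the perturbation by $\frac12$. The crude bound $|f\cdots\varphi(z)|+\frac12|h_mh_m\cdots\psi(z)|$ is still too large. So I would instead use $\|z\|$-splitting: $|\varphi(z)|\le(1-|\psi(z)|^2)^{1/2}$, hence
\[
|\varphi(z)|+c|\psi(z)|\le(1+c^2)^{1/2}.
\]
The cleanest choice is to put the constant in front: define $A_m$ with perturbation coefficient $c$ and normalize by $(1+c^2)^{-1/2}$. Then $\|A_m\|\le1$, and norm one is attained along $u_j$ with $|f(u_j)|\to1$ and $z$ with $\varphi(z)\approx1$, by choosing $h_m$ small there. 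Since $h_m$ depends on $m$, it is simpler to accept $\|A_m\|\to1$ after renormalizing $A_m$ by its norm, and to normalize $A$ by its norm as well. The $w^*$-KK definition only needs unit vectors, and renormalization preserves both the $w^*$ limit (the norms converge, because the lower bound comes from points where $h_m\to0$) and the separation.
\item[(v)] Weak-star convergence follows from $h_m(x)\to0$ for fixed $x$.
\item[(vi)] Separation: $\|A_m-A\|\gtrsim c\|\psi\|\,\|h_m^2f^{n-2}\|\ge c\|\psi\|/n^n$, by \cite{BST}.
\end{itemize}

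The main obstacle is this norm control in Case 2, i.e.\ ensuring the limit $A$ and the $A_m$ lie on the sphere simultaneously. The renormalization argument above is how I would handle it.
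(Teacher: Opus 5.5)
Your Case~1 is correct. It is the symmetrized version of the construction the paper uses for $\mathcal{P}(^nX;Z^*)$, and each step goes through: the pairwise Cauchy--Schwarz bound, the norming sequence, the weak-star convergence, and the $m$-independent separation.

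Case~2, however, has a genuine gap. With the perturbation $h_m(x_i)h_m(x_j)\prod_{k\neq i,j}f(x_k)\,\psi$, nothing forces $h_m$ to be small where $|f|$ is close to $1$, and renormalizing does not help. The norm is only weak-star lower semicontinuous. Your lower bound at points where $h_m$ is small therefore gives $\liminf_m\|A_m\|\geq\|A\|$, not $\|A_m\|\to\|A\|$. If $\|A_m\|\to L>\|A\|$, then $A_m/\|A_m\|\xrightarrow{w^*}A/L$, a point strictly inside the ball, which is exactly what the $w^*$-KK allows. Normalizing $A$ separately destroys the weak-star convergence, and normalizing by $(1+c^2)^{-1/2}$ puts the limit off the sphere. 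This really happens: take $X=c_0$, $Z=\ell_2^2$, $n=2$, $f=e_1^*$, $h_m=e_m^*$, $\varphi=e_1$, $\psi=e_2$. Then your $A_m(x,y)=x_1y_1e_1+c\,x_my_me_2$ has $\|A_m\|=\sqrt{1+c^2}$ for every $m$ (take $x=y=e_1+e_m$). No choice of $h_m$ rescues the pairing of $\psi$ with $h_m$ here. A normalized weak-star null sequence in $\ell_1$ is asymptotically disjoint from $f$, so $\|\alpha f+\beta h_m\|_1\to|\alpha|+|\beta|$, and the inequality $|f(x)|^2+|h_m(x)|^2\leq\|x\|^2$ you need fails for large $m$. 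Making $h_m$ vanish at one point via Lemma~\ref{JN-lemma} is irrelevant.

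The missing idea is to pair $\psi$ with the complementary functional $g\in X^*$ from Lemma~\ref{lemma: pol complex}, applied to $X$ (which has dimension at least $2$). Then $h_m$ enters only once, as a factor of modulus at most one. Take $A=f^{\otimes n}\varphi$ and
\[
A_m(x_1,\ldots,x_n)=A(x_1,\ldots,x_n)+\frac{1}{n(n-1)}\sum_{i\neq j}g(x_i)h_m(x_j)\prod_{k\neq i,j}f(x_k)\,\psi .
\]
The four required properties then follow.
\begin{enumerate}
\item \emph{Norm bound.} For $x_k\in B_X$ and $z\in B_Z$, each ordered-pair term of $\langle A_m(x_1,\ldots,x_n),z\rangle$ is bounded by $|f(x_i)||\varphi(z)|+|g(x_i)||\psi(z)|\leq\bigl(|f(x_i)|^2+|g(x_i)|^2\bigr)^{1/2}\bigl(|\varphi(z)|^2+|\psi(z)|^2\bigr)^{1/2}\leq1$. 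Hence $\|A_m\|\leq1$.
\item \emph{Norm one.} This is witnessed by $u_j$ with $|f(u_j)|\to1$ (so $g(u_j)\to0$) together with $z$ such that $|\varphi(z)|\to1$.
\item \emph{Weak-star convergence.} Each perturbation summand contains exactly one factor $h_m(x_j)\to0$, so $A_m\xrightarrow{w^*}A$.
\item \emph{Separation.} On the diagonal, $(A_m-A)(x,\ldots,x)=f(x)^{n-2}g(x)h_m(x)\psi$. By the lower bound for products of functionals used in Theorem~\ref{prop:pol complex}, $\|A_m-A\|\geq\|\psi\|\,\|g\|/n^n$.
\end{enumerate}
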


In view of the preceding results, the only non-trivial spaces that remain to be studied are $\mathcal{P}(^2X;\mathbb{C})$, $\mathcal{L}(X\times Y;\mathbb{K})$, and $\mathcal{L}_s(^2X;\mathbb{K})$, which we already did in the previous sections. These are therefore the only cases deserving investigation.

\section{Further remarks}\label{sec:further-remarks}

In this final section we collect several remarks that complement the main results of the paper.

We first discuss the optimality of the concentration theorems established in \hyperref[sec: concentration]{Section~\ref*{sec: concentration}}, showing that each of their assumptions is necessary in view of the negative results obtained throughout the paper. We then point out that our arguments extend to the non-separable spaces $c_0(\Gamma)$ and $\ell_p(\Gamma)$ for an arbitrary index set $\Gamma$. Finally, we describe some consequences of our results concerning strong subdifferentiability of the norm of a Banach space.

\subsection*{On the optimality of the concentration results}

The concentration results proved in \hyperref[sec: concentration]{Section~\ref*{sec: concentration}} constitute the main technical ingredient in our positive weak-star Kadec--Klee results. In particular, they imply the $w^*$-UKK property for the corresponding spaces of homogeneous polynomials and bilinear forms.

From this point of view, the negative examples obtained throughout the paper also provide information about the limitations of these concentration principles. Indeed, if a concentration theorem of the same nature held under substantially weaker hypotheses, then the arguments of \hyperref[sec: quadratic and bilinear]{Section~\ref*{sec: quadratic and bilinear}} would still yield the corresponding $w^*$-UKK, contradicting the counterexamples established earlier.

More precisely, the examples of spaces not having the $w^*$-UKK show the following limitations.

\begin{itemize}
\item[(a)] The restriction to complex scalars is essential in the polynomial setting. Indeed, \hyperref[prop:pol real]{Theorem~\ref*{prop:pol real}} shows that, over the real field, $\mathcal P(^nX;\mathbb R)$ never has the $w^*$-KK for any infinite-dimensional Banach space whenever $n\ge2$. Consequently, no concentration result analogous to \hyperref[lemma-p-convexity]{Theorem~\ref*{lemma-p-convexity}} can hold in the real setting.

\item[(b)] The restriction to degree two is also essential. \hyperref[prop:pol complex]{Theorem~\ref*{prop:pol complex}} shows that, for complex Banach spaces, $\mathcal P(^nX;\mathbb C)$ fails the sequential $w^*$-KK whenever $n\ge3$. Therefore, the concentration phenomenon cannot extend to higher-degree homogeneous polynomials.

\item[(c)] The assumption of $p$-convexity with $p>2$ is likewise indispensable. Indeed, \hyperref[prop:lp-small-p]{Example~\ref*{prop:lp-small-p}} and \hyperref[ex:Lorentz-fails-1-p-2]{Example~\ref*{ex:Lorentz-fails-1-p-2}} show that $\mathcal P(^2\ell_p;\mathbb C)$ and $\mathcal P(^2d(w,p);\mathbb C)$ fail even the sequential $w^*$-KK whenever $1<p\le2$. Thus, no concentration theorem of the same type can hold without some geometric hypothesis comparable to $p$-convexity.

\item[(d)] In \hyperref[thm: conc-bilin]{Theorem \ref*{thm: conc-bilin}}, the assumption that the scalars are complex is also unavoidable. Indeed, Dilworth and Kutzarova showed that the real space $\mathcal L(^2c_0(\mathbb R);\mathbb R)$ fails this property. Consequently, even assuming the relevant convexity hypothesis, one cannot expect a concentration theorem of the type proved here over the real scalars.
\end{itemize}

Altogether, these examples indicate that, although the hypotheses of our concentration theorems may perhaps be reformulated or generalized in certain directions, none of them can be removed entirely. Each assumption rules out a genuine obstruction to the concentration phenomenon and, consequently, to the $w^*$-UKK.

\subsection*{Uncountable results}  The arguments given in this paper remain valid when the index set $\mathbb N$ is replaced by an arbitrary set $\Gamma$, as finitely supported vectors are dense, and the equality $\|x \pm t y\|_p^p = \|x\|_p^p + t^p \|y\|_p^p$ still holds for vectors with disjoint support in this context. Therefore, our results for $c_0$ and $\ell_p$ can be simply replaced immediately by $c_0(\Gamma)$ and $\ell_p(\Gamma)$.

\subsection*{Strong subdifferentiability}

Let $E$ be a Banach space and let $x\in S_E$. The norm of $E$ is said to be strongly subdifferentiable (SSD, for short) at $x$ if the one-sided limit
\[
\lim_{t\to0^+}\frac{\|x+th\|-1}{t}
\]
exists uniformly for $h\in B_E$ (see, for instance, \cite{FP, GodefroyMontesinosZizler}). We say that $E$ is \emph{SSD} if its norm is strongly subdifferentiable at every point of the unit sphere.

It is well known that if the dual space $E^*$ has the  $w^*$-KK, then the norm of $E$ is strongly subdifferentiable (see, for instance, \cite[Proposition~2.6]{DantasKimLeeMazzitelli}). Since the $w^*$-UKK property implies the $w^*$-KK, the results obtained in this paper immediately yield the following consequences.

\begin{corollary}\label{cor:ssd-consequences}
Each of the following Banach spaces has strongly subdifferentiable norm.
\begin{itemize}
\item[(a)] $\polten{2} X$, whenever $X$ is a complex Banach sequence lattice such that $c_{00}$ is dense in $X$ and $X$ is $p$-convex with constant one for some $p>2$.
\item[(b)] $X\pten Y$, whenever $X$ and $Y$ are complex Banach sequence lattices such that $c_{00}$ is dense and  $p$-convex with constant one for some $p>2$.
\item[(c)] $X\pten Y$, whenever $X$ and $Y$ are Banach sequence lattices satisfying the assumptions of \hyperref[real-theorem-3]{Theorem~\ref*{real-theorem-3}}.
\end{itemize}
\end{corollary}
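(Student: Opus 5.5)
The plan is to combine the $w^*$-UKK results of \hyperref[sec: quadratic and bilinear]{Section~\ref*{sec: quadratic and bilinear}} with the known implication quoted just before the statement: if $E^*$ has the $w^*$-KK, then the norm of $E$ is SSD \cite[Proposition~2.6]{DantasKimLeeMazzitelli}. In each item, $E$ is the relevant tensor product and $E^*$ is the corresponding space of polynomials or bilinear forms. So it suffices to check two things: that $E^*$ has the $w^*$-UKK, and that the $w^*$-UKK implies the $w^*$-KK.

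For the first point, the three items are handled as follows. In (a), the duality $\bigl(\polten{2}X\bigr)^*=\mathcal{P}(^2X;\C)$ together with \hyperref[theorem-p-convexity]{Theorem~\ref*{theorem-p-convexity}} gives the $w^*$-UKK. In (b), the duality $(X\pten Y)^*=\mathcal{L}(^2X\times Y;\C)$ together with \hyperref[theorem-p-convexity bilineal]{Theorem~\ref*{theorem-p-convexity bilineal}} does the same. In (c), we use \hyperref[real-theorem-3]{Theorem~\ref*{real-theorem-3}}.

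The only step needing care is the implication from $w^*$-UKK to $w^*$-KK, which I would verify via \hyperref[obs:wstar-ukk]{Proposition~\ref*{obs:wstar-ukk}}.
\begin{itemize}
\item[(i)] Take a net $(x^*_\alpha)\subseteq S_{E^*}$ with $x^*_\alpha\xrightarrow{w^*}x^*\in S_{E^*}$, and fix $\eps>0$.
\item[(ii)] Since $\|x^*\|=1>1-\eta$ for every $\eta>0$, \hyperref[obs:wstar-ukk]{Proposition~\ref*{obs:wstar-ukk}} gives a weak-star neighborhood $U$ of $x^*$ with $\diam(U\cap B_{E^*})<\eps$.
\item[(iii)] Eventually $x^*_\alpha\in U\cap B_{E^*}$, and $x^*$ itself lies in $U\cap B_{E^*}$. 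Hence $\|x^*_\alpha-x^*\|<\eps$ eventually, which is exactly the $w^*$-KK.
\end{itemize}

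One subtlety is that SSD requires the dual to have the $w^*$-KK with respect to the given predual. The weak-star topologies used in the cited theorems are precisely those induced by $\polten{2}X$ and $X\pten Y$, as recorded in \hyperref[lem:wstar]{Lemma~\ref*{lem:wstar}}, so the topologies match.

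I expect no real obstacle in this argument. The one place requiring attention is the hypotheses of (c): the scalar field there is whatever \hyperref[real-theorem-3]{Theorem~\ref*{real-theorem-3}} allows, so no separate complex-case argument is needed.
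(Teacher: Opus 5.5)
Your proposal is correct and follows the paper's argument: the paper likewise obtains the corollary from Theorems~\ref{theorem-p-convexity}, \ref{theorem-p-convexity bilineal} and~\ref{real-theorem-3}, together with the implication from $w^*$-UKK to $w^*$-KK and the cited fact that the $w^*$-KK of $E^*$ gives strong subdifferentiability of the norm of $E$. Your derivation of that implication via Proposition~\ref{obs:wstar-ukk} is valid and supplies a step the paper only asserts.
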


Finally, these strong subdifferentiability results have an additional consequence: they can be used to obtain norm-attainment-type results for bilinear forms and $2$-homogeneous polynomials (see \cite[Proposition~4.4]{DantasKimLeeMazzitelli}).

\section{Acknowledgments} During the preparation of this manuscript, the authors used {\it ChatGPT Go} to assist with the verification of mathematical arguments, the organization of the manuscript and bibliography, language editing and the preparation of portions of the \LaTeX\ source. All AI-assisted material was subsequently reviewed, verified and revised by the authors, who take full responsibility for the content of the paper.

\section{Funding} S. Dantas has been supported by the grants PID2021-122126NB-C31 and PID2021-122126NB-C33 funded by MICIU/AEI/ 10.13039/ 501100011033 and by ERDF/EU. J. T. Rodríguez has been partially supported by CONICET PIP 11220200101609CO.

\end{document}